\newtheorem{thm}{Theorem}
\newtheorem{lem}[thm]{Lemma}
\newtheorem{prop}[thm]{Proposition}
\theoremstyle{definition}
\newtheorem{defn}[thm]{Definition}
\newtheorem{rmk}[thm]{Remark}
\numberwithin{equation}{section}
\numberwithin{thm}{section}
\newcommand{\C}{\mathcal{C}}
\newcommand{\F}{\mathcal{F}}
\newcommand{\N}{\mathbb{N}}
\newcommand{\R}{\mathbb{R}}
\newcommand{\W}{\mathcal{W}}
\newcommand{\Z}{\mathbb{Z}}
\renewcommand{\r}{\mathrm{r}}
\newcommand{\p}{\partial}
\newcommand{\dx}{\: \mathrm{d}}
\newcommand{\ie}{\textit{i.e.}}
\newcommand{\nm}{\noalign{\smallskip}}
\newcommand{\ds}{\displaystyle}
\newcommand{\iu}{\mathrm{i}\mkern1mu}
\newcommand{\I}{\mathrm{I}}
\newcommand{\II}{\mathrm{II}}
\newcommand{\mathhl}[1]{\colorbox{cyan}{$\ds #1$}}
\newcommand{\hlbox}[1]{\colorbox{cyan}{#1}}
\renewcommand\hl[1]{#1}
\renewcommand\mathhl[1]{#1}
\renewcommand\hlbox[1]{#1}
\newcommand{\neutralize}[1]{\expandafter\let\csname c@#1\endcsname\count@}
\title{Time-dependent high-contrast subwavelength resonators}
\author{
	Habib Ammari\thanks{\footnotesize Department of Mathematics, 
		ETH Z\"urich, 
		R\"amistrasse 101, CH-8092 Z\"urich, Switzerland (habib.ammari@math.ethz.ch, erik.orvehed.hiltunen@sam.math.ethz.ch).} \and Erik Orvehed Hiltunen\footnotemark[1]}
\date{}
\begin{document}
	\maketitle
	
	\begin{abstract}
		In the field of metamaterials, many intriguing phenomena arise from having a structure which is periodic in space. In time-dependent structures, conceptually similar properties can arise, which nevertheless have fundamentally different physical implications. In this work, we study time-dependent systems in the context of subwavelength metamaterials. The main result is a capacitance matrix characterization of the band structure, which generalizes previous recent work on static subwavelength metamaterials. \hl{This characterization provides both theoretical insight and efficient numerical methods to compute the dispersion relationship of time-dependent structures}. We exemplify this in several structures exhibiting interesting wave manipulation properties. 
	\end{abstract}
\vspace{0.5cm}
	\noindent{\textbf{Mathematics Subject Classification (MSC2000):} 35J05, 35C20, 35P20.
		
\vspace{0.2cm}

	\noindent{\textbf{Keywords:}} time-dependent materials, subwavelength resonance, exceptional points, Dirac singularity at the origin, subwavelength phononic and photonic crystals.
\vspace{0.5cm}	
%

\section{Introduction}
The use of so-called metamaterials has been shown to offer extraordinary usability in controlling and manipulating waves. These effects originate from an intricate, often periodic, spatial structure. A natural generalization of this concept is to consider time-dependent structures, whereby the material parameters depend not only on the spatial variable but are also modulated in time. It is well-known that a ``step-like'' time-modulation (\ie{} an instantaneous shift between two constant values) can cause waves to be reflected and refracted, similarly to sharp spatial interfaces between different materials \cite{mendoncca2002time,koutserimpas2020electromagnetic,bal2019time,morgenthaler1958velocity}. Moreover, time-modulation provides a way to break reciprocity, which is otherwise a fundamental restriction of wave propagation \cite{ourir2019active,guo2019nonreciprocal,sounas2017non,li2019nonreciprocal,nassar2017non}.

The case of static materials which are repeated periodically in space has been well-studied using the Floquet-Bloch theory. In particular, we can define a band structure of the material, which describes the frequency-to-momentum relationship of waves inside the material. Crucially, in space-periodic structures, the wave momentum is contained inside the \emph{Brillouin zone}, and is defined modulo elements of the dual lattice. If there is a gap between the band functions, waves with frequencies inside this band gap cannot propagate through the material and will be exponentially decaying.

Inspired by space-periodic structures, it is natural to consider time-modulations which are periodic. Again we can define a band structure, whose frequencies will now be repeated periodically. An interesting application of this is to create frequency-converting systems, which swap between equivalent frequencies modulo the modulation frequency. Fundamentally, the frequency-conversion is made possible due to the broken energy conservation, which in turn originates from the energy input required to create the time-modulation. Moreover, since there is no energy conservation, there can be unstable waves which are amplified or dampened by the system. In the field of electronics, these ideas have been used to create parametric amplifiers \cite{cullen1958travelling}. Unstable Bloch waves are shown as complex frequencies in the band structure, or (restricting to the real Brillouin zone) as band gaps in the momentum variable (known as \emph{momentum-} or \emph{$k$-gaps})\hlbox{ \cite{koutserimpas2018electromagnetic,martinez2016temporal,zurita2009reflection,cassedy1967dispersion}}.}

Periodic time-modulated structures have been studied in a variety of settings, enabling novel wave phenomena. Due to the energy input, these systems are in general non-Hermitian. This opens the possibility for \emph{exceptional points}, which are parameter points where the eigenmodes coalesce. Such points have a variety of applications, most notably to enhanced sensing \cite{ammari2020highorder,ammari2020exceptional,heiss2012physics}. Moreover, the broken time-reversal symmetry can be used to replicate spin effects from quantum systems. As an example, having phase-shifted (``rotation-like'') modulations can provide a kind of ``artificial spin'' \cite{koutserimpas2018zero,fleury2016floquet,rechtsman2013photonic}. These ideas have been used to study classical analogues of the quantum Hall effect, and so-called Floquet topological insulators \cite{fleury2016floquet,rechtsman2013photonic,raghu2008analogs,nash2015topological,nassar2018quantization,wilson2018temporal,wilson2019temporally}. In \cite{koutserimpas2018zero}, a similar structure was demonstrated to have zero refractive index properties, originating from a linear, cone-like degeneracy in the band structure known as a \emph{Dirac cone} at the origin of the Brillouin zone.

In applications, it is desirable to be able to achieve above mentioned phenomena on \emph{subwavelength} scales. Here, subwavelength means that the length-scale of the system is considerably smaller than the wavelength at which it operates. This is particularly desirable in acoustics, where the wavelengths often are of the order of several meters \cite{ma2016acoustic}. Subwavelength metamaterials can be achieved by having a locally resonant microstructure. In other words, the material is composed of building-blocks which themselves are subwavelength resonators \cite{ammari2017subwavelength,yves2017crystalline,yves2017topological,wang2019subwavelength}.

\emph{High-contrast} resonators are a natural choice of resonators when designing subwavelength metamaterials. Here, the subwavelength nature stems from a high material contrast between the constituting materials of the structure, and such structures can be used to achieve a variety of effects \cite{davies2019fully, ammari2018minnaert,ammari2020exceptional,ammari2020highfrequency,ammari2020highorder,ammari2017subwavelength,ammari2017double,ammari2020honeycomb,MaCMiPaP}. In this work, we study systems of time dependent high-contrast resonators. The goal is to provide a mathematical foundation that explains effects found in time-modulated systems, for waves in the subwavelength frequency regime. We will begin by carefully defining the notion of subwavelength frequencies in time-modulated systems, which due to the frequency conversion is not immediate to interpret. Afterwards, we will characterize the subwavelength band structure of time-modulated systems of high-contrast resonators. \hl{The methods utilized here do not assume weak modulations, and is valid even in the case of large modulation amplitudes.}

This work is structured as follows. In \Cref{sec:problem}, we formulate the general wave equation problem for time-modulated high-contrast resonators. In the following two sections, we study two different types of realizations of this problem. In \Cref{sec:uniform}, we study the case when the time-modulation is applied uniformly in space, both outside and inside the resonators. This can be viewed as a generalization of the problem considered in \cite{koutserimpas2018electromagnetic}, to systems with more general spatial structures. Similarly to \cite{koutserimpas2018electromagnetic}, we find that the time dynamics is governed by the Hill equation, here posed in terms of the instantaneous Minnaert frequencies. We demonstrate that $k$-gaps can be naturally created this way. Although many interesting phenomena can be realized in such systems, it is impossible to create \textit{e.g.} artificial spin using this type of modulation. Therefore, in \Cref{sec:resonatormod}, we consider another time-modulation, applied only to the interior of the resonators. In the asymptotic high-contrast limit, the subwavelength band structure is now characterized by a capacitance matrix formulation, which generalises the case of static systems (see, \textit{e.g.} \cite{ammari2020honeycomb, ammari2017double}). \hl{In the static case, the capacitance matrix formulation offers a rigorous approximation to the differential problem, in terms of a discrete eigenvalue problem.} In the modulated case, this capacitance matrix formulation is now posed as a system of ordinary differential equations in $t$, which can be viewed as a system of coupled Hill equations. \hl{From a computational perspective, we have now reduced the four-dimensional partial differential equation into an ordinary differential equation, allowing efficient numerical methods.} Based on this characterization, we numerically compute the band structure of different materials. This way, we demonstrate the possibility of achieving exceptional points and Dirac cones in the subwavelength regime through the use of time-modulation.

	\section{Problem formulation and preliminary theory}\label{sec:problem}
	In this section, we define the problem of study. Moreover, we introduce the Floquet-Bloch theory for periodic differential equations, \hl{and recall the main results for systems of static resonators.}
	
	\subsection{Problem formulation} \label{sec:formulation}
	We will solve the wave equation in a structure composed of contrasting materials. The material parameter distribution is given by $\rho(x,t)$ and $\kappa(x,t)$. In the example of acoustic waves $\rho$ and $\kappa$ correspond to the density and the bulk modulus of the materials. We emphasize, however, that the equation of study is not restricted to acoustic waves but applies to a wider range of classical wave problems, most notably also to polarized electromagnetic waves.
	
	We study the time-dependent wave equation in dimensions $d=2$ or $d=3$,
	\begin{equation}\label{eq:wave}
		\left(\frac{\p }{\p t } \frac{1}{\kappa(x,t)} \frac{\p}{\p t} - \nabla \cdot \frac{1}{\rho(x,t)} \nabla\right) u(x,t) = 0, \quad x\in \R^d, t\in \R.
	\end{equation}
	Here, $\nabla$ denotes the gradient with respect to $x\in \R^d$. We assume that the geometry is periodic, as illustrated in \Cref{fig:geometry}. Given the linearly independent lattice vectors $l_1, ..., l_d \in \R^d$, we define the lattice $\Lambda$ and the unit cell $Y$ by
	\begin{equation}\Lambda = \{m_1l_1 + ... + m_dl_d \mid m_1, ..., m_d \in \Z\}, \qquad Y = \{a_1l_1 + ... + a_dl_d \mid 0\leq a_1,...,a_d \leq 1\}.\end{equation}
	We assume that each unit cell contains a system of resonators $D\subset Y$. $D$ is constituted by $N$ disjoint domains $D_i$ for $i=1,...,N$, each $D_i$ being connected and having boundary of Hölder class $\p D_i \in C^{1,s}, 0 < s < 1$. The periodic crystal, $\C$, of resonators is then defined by
	\begin{equation}\C = \bigcup_{m\in \Lambda} D+m.\end{equation}
	
	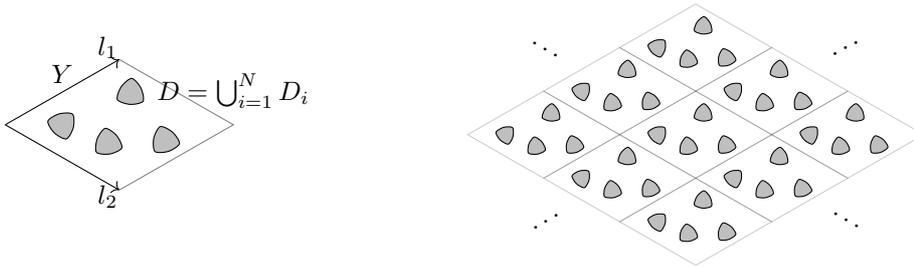
\begin{figure}[tbh]
		\begin{subfigure}[b]{0.48\linewidth}
			\centering
			\begin{tikzpicture}[scale=1.5]
				\begin{scope}[scale=1]
					\pgfmathsetmacro{\rb}{0.13pt}
					\pgfmathsetmacro{\rs}{0.1pt}
					\pgfmathsetmacro{\ao}{326}
					\pgfmathsetmacro{\at}{46}
					\pgfmathsetmacro{\ad}{228}
					\pgfmathsetmacro{\af}{100}
					\coordinate (a) at (1,{1/sqrt(3)});		
					\coordinate (b) at (1,{-1/sqrt(3)});	
					\coordinate (c) at (2,0);
					\draw (2,0.3) node{$D= \bigcup_{i=1}^N D_i$};
					\draw[->] (0,0) -- (a) node[pos=0.9,xshift=0,yshift=7]{ $l_1$} node[pos=0.5,above]{$Y$};
					\draw[->] (0,0) -- (b) node[pos=0.9,xshift=0,yshift=-5]{ $l_2$};
					\draw[opacity=0.5] (a) -- (c) -- (b);
					\begin{scope}[xshift = 1.1cm, yshift=0.28cm,rotate=\ao]
						\draw[fill=lightgray] plot [smooth cycle] coordinates {(0:\rb) (60:\rs) (120:\rb) (180:\rs) (240:\rb) (300:\rs) };
					\end{scope}
					\begin{scope}[xshift = 0.5cm, rotate=\at]
						\draw[fill=lightgray] plot [smooth cycle] coordinates {(0:\rb) (60:\rs) (120:\rb) (180:\rs) (240:\rb) (300:\rs) };
					\end{scope}
					\begin{scope}[xshift = 1.4cm, yshift=-0.14cm, rotate=\ad]
						\draw[fill=lightgray] plot [smooth cycle] coordinates {(0:\rb) (60:\rs) (120:\rb) (180:\rs) (240:\rb) (300:\rs) };
					\end{scope}
					\begin{scope}[xshift = 0.9cm,yshift=-0.16cm, rotate=\af]
						\draw[fill=lightgray] plot [smooth cycle] coordinates {(0:\rb) (60:\rs) (120:\rb) (180:\rs) (240:\rb) (300:\rs) };
					\end{scope}
				\end{scope}
			\end{tikzpicture}
			\vspace{0.65cm}
			\caption{Unit cell $Y$ containing $N$ resonators.}
		\end{subfigure}
		\begin{subfigure}[b]{0.48\linewidth}
			\begin{tikzpicture}[scale=1]
				\begin{scope}[xshift=-5cm,scale=1]
					\coordinate (a) at (1,{1/sqrt(3)});		
					\coordinate (b) at (1,{-1/sqrt(3)});	
					\coordinate (c) at (2,0);
					\pgfmathsetmacro{\rb}{0.13pt}
					\pgfmathsetmacro{\rs}{0.1pt}
					\pgfmathsetmacro{\ao}{326}
					\pgfmathsetmacro{\at}{46}
					\pgfmathsetmacro{\ad}{228}
					\pgfmathsetmacro{\af}{100}
					
					\draw[opacity=0.2] (0,0) -- (a);
					\draw[opacity=0.2] (0,0) -- (b);
					\draw[opacity=0.2] (a) -- (c) -- (b);
					\begin{scope}[xshift = 1.1cm, yshift=0.28cm,rotate=\ao]
						\draw[fill=lightgray] plot [smooth cycle] coordinates {(0:\rb) (60:\rs) (120:\rb) (180:\rs) (240:\rb) (300:\rs) };
					\end{scope}
					\begin{scope}[xshift = 0.5cm, rotate=\at]
						\draw[fill=lightgray] plot [smooth cycle] coordinates {(0:\rb) (60:\rs) (120:\rb) (180:\rs) (240:\rb) (300:\rs) };
					\end{scope}
					\begin{scope}[xshift = 1.4cm, yshift=-0.14cm, rotate=\ad]
						\draw[fill=lightgray] plot [smooth cycle] coordinates {(0:\rb) (60:\rs) (120:\rb) (180:\rs) (240:\rb) (300:\rs) };
					\end{scope}
					\begin{scope}[xshift = 0.9cm,yshift=-0.16cm, rotate=\af]
						\draw[fill=lightgray] plot [smooth cycle] coordinates {(0:\rb) (60:\rs) (120:\rb) (180:\rs) (240:\rb) (300:\rs) };
					\end{scope}

					\begin{scope}[shift = (a)]
						\draw[opacity = 0.2] (0,0) -- (1,{1/sqrt(3)}) -- (2,0) -- (1,{-1/sqrt(3)}) -- cycle; 
						\begin{scope}[xshift = 1.1cm, yshift=0.28cm,rotate=\ao]
							\draw[fill=lightgray] plot [smooth cycle] coordinates {(0:\rb) (60:\rs) (120:\rb) (180:\rs) (240:\rb) (300:\rs) };
						\end{scope}
						\begin{scope}[xshift = 0.5cm, rotate=\at]
							\draw[fill=lightgray] plot [smooth cycle] coordinates {(0:\rb) (60:\rs) (120:\rb) (180:\rs) (240:\rb) (300:\rs) };
						\end{scope}
						\begin{scope}[xshift = 1.4cm, yshift=-0.14cm, rotate=\ad]
							\draw[fill=lightgray] plot [smooth cycle] coordinates {(0:\rb) (60:\rs) (120:\rb) (180:\rs) (240:\rb) (300:\rs) };
						\end{scope}
						\begin{scope}[xshift = 0.9cm,yshift=-0.16cm, rotate=\af]
							\draw[fill=lightgray] plot [smooth cycle] coordinates {(0:\rb) (60:\rs) (120:\rb) (180:\rs) (240:\rb) (300:\rs) };
						\end{scope}
					\end{scope}
					\begin{scope}[shift = (b)]
						\draw[opacity = 0.2] (0,0) -- (1,{1/sqrt(3)}) -- (2,0) -- (1,{-1/sqrt(3)}) -- cycle; 
						\begin{scope}[xshift = 1.1cm, yshift=0.28cm,rotate=\ao]
							\draw[fill=lightgray] plot [smooth cycle] coordinates {(0:\rb) (60:\rs) (120:\rb) (180:\rs) (240:\rb) (300:\rs) };
						\end{scope}
						\begin{scope}[xshift = 0.5cm, rotate=\at]
							\draw[fill=lightgray] plot [smooth cycle] coordinates {(0:\rb) (60:\rs) (120:\rb) (180:\rs) (240:\rb) (300:\rs) };
						\end{scope}
						\begin{scope}[xshift = 1.4cm, yshift=-0.14cm, rotate=\ad]
							\draw[fill=lightgray] plot [smooth cycle] coordinates {(0:\rb) (60:\rs) (120:\rb) (180:\rs) (240:\rb) (300:\rs) };
						\end{scope}
						\begin{scope}[xshift = 0.9cm,yshift=-0.16cm, rotate=\af]
							\draw[fill=lightgray] plot [smooth cycle] coordinates {(0:\rb) (60:\rs) (120:\rb) (180:\rs) (240:\rb) (300:\rs) };
						\end{scope}
					\end{scope}
					\begin{scope}[shift = ($-1*(a)$)]
						\draw[opacity = 0.2] (0,0) -- (1,{1/sqrt(3)}) -- (2,0) -- (1,{-1/sqrt(3)}) -- cycle; 
						\begin{scope}[xshift = 1.1cm, yshift=0.28cm,rotate=\ao]
							\draw[fill=lightgray] plot [smooth cycle] coordinates {(0:\rb) (60:\rs) (120:\rb) (180:\rs) (240:\rb) (300:\rs) };
						\end{scope}
						\begin{scope}[xshift = 0.5cm, rotate=\at]
							\draw[fill=lightgray] plot [smooth cycle] coordinates {(0:\rb) (60:\rs) (120:\rb) (180:\rs) (240:\rb) (300:\rs) };
						\end{scope}
						\begin{scope}[xshift = 1.4cm, yshift=-0.14cm, rotate=\ad]
							\draw[fill=lightgray] plot [smooth cycle] coordinates {(0:\rb) (60:\rs) (120:\rb) (180:\rs) (240:\rb) (300:\rs) };
						\end{scope}
						\begin{scope}[xshift = 0.9cm,yshift=-0.16cm, rotate=\af]
							\draw[fill=lightgray] plot [smooth cycle] coordinates {(0:\rb) (60:\rs) (120:\rb) (180:\rs) (240:\rb) (300:\rs) };
						\end{scope}
					\end{scope}
					\begin{scope}[shift = ($-1*(b)$)]
						\draw[opacity = 0.2] (0,0) -- (1,{1/sqrt(3)}) -- (2,0) -- (1,{-1/sqrt(3)}) -- cycle; 
						\begin{scope}[xshift = 1.1cm, yshift=0.28cm,rotate=\ao]
							\draw[fill=lightgray] plot [smooth cycle] coordinates {(0:\rb) (60:\rs) (120:\rb) (180:\rs) (240:\rb) (300:\rs) };
						\end{scope}
						\begin{scope}[xshift = 0.5cm, rotate=\at]
							\draw[fill=lightgray] plot [smooth cycle] coordinates {(0:\rb) (60:\rs) (120:\rb) (180:\rs) (240:\rb) (300:\rs) };
						\end{scope}
						\begin{scope}[xshift = 1.4cm, yshift=-0.14cm, rotate=\ad]
							\draw[fill=lightgray] plot [smooth cycle] coordinates {(0:\rb) (60:\rs) (120:\rb) (180:\rs) (240:\rb) (300:\rs) };
						\end{scope}
						\begin{scope}[xshift = 0.9cm,yshift=-0.16cm, rotate=\af]
							\draw[fill=lightgray] plot [smooth cycle] coordinates {(0:\rb) (60:\rs) (120:\rb) (180:\rs) (240:\rb) (300:\rs) };
						\end{scope}
					\end{scope}
					\begin{scope}[shift = ($(a)+(b)$)]
						\draw[opacity = 0.2] (0,0) -- (1,{1/sqrt(3)}) -- (2,0) -- (1,{-1/sqrt(3)}) -- cycle; 
						\begin{scope}[xshift = 1.1cm, yshift=0.28cm,rotate=\ao]
							\draw[fill=lightgray] plot [smooth cycle] coordinates {(0:\rb) (60:\rs) (120:\rb) (180:\rs) (240:\rb) (300:\rs) };
						\end{scope}
						\begin{scope}[xshift = 0.5cm, rotate=\at]
							\draw[fill=lightgray] plot [smooth cycle] coordinates {(0:\rb) (60:\rs) (120:\rb) (180:\rs) (240:\rb) (300:\rs) };
						\end{scope}
						\begin{scope}[xshift = 1.4cm, yshift=-0.14cm, rotate=\ad]
							\draw[fill=lightgray] plot [smooth cycle] coordinates {(0:\rb) (60:\rs) (120:\rb) (180:\rs) (240:\rb) (300:\rs) };
						\end{scope}
						\begin{scope}[xshift = 0.9cm,yshift=-0.16cm, rotate=\af]
							\draw[fill=lightgray] plot [smooth cycle] coordinates {(0:\rb) (60:\rs) (120:\rb) (180:\rs) (240:\rb) (300:\rs) };
						\end{scope}
					\end{scope}
					\begin{scope}[shift = ($-1*(a)-(b)$)]
						\draw[opacity = 0.2] (0,0) -- (1,{1/sqrt(3)}) -- (2,0) -- (1,{-1/sqrt(3)}) -- cycle; 
						\begin{scope}[xshift = 1.1cm, yshift=0.28cm,rotate=\ao]
							\draw[fill=lightgray] plot [smooth cycle] coordinates {(0:\rb) (60:\rs) (120:\rb) (180:\rs) (240:\rb) (300:\rs) };
						\end{scope}
						\begin{scope}[xshift = 0.5cm, rotate=\at]
							\draw[fill=lightgray] plot [smooth cycle] coordinates {(0:\rb) (60:\rs) (120:\rb) (180:\rs) (240:\rb) (300:\rs) };
						\end{scope}
						\begin{scope}[xshift = 1.4cm, yshift=-0.14cm, rotate=\ad]
							\draw[fill=lightgray] plot [smooth cycle] coordinates {(0:\rb) (60:\rs) (120:\rb) (180:\rs) (240:\rb) (300:\rs) };
						\end{scope}
						\begin{scope}[xshift = 0.9cm,yshift=-0.16cm, rotate=\af]
							\draw[fill=lightgray] plot [smooth cycle] coordinates {(0:\rb) (60:\rs) (120:\rb) (180:\rs) (240:\rb) (300:\rs) };
						\end{scope}
					\end{scope}
					\begin{scope}[shift = ($(a)-(b)$)]
						\draw[opacity = 0.2] (0,0) -- (1,{1/sqrt(3)}) -- (2,0) -- (1,{-1/sqrt(3)}) -- cycle; 
						\begin{scope}[xshift = 1.1cm, yshift=0.28cm,rotate=\ao]
							\draw[fill=lightgray] plot [smooth cycle] coordinates {(0:\rb) (60:\rs) (120:\rb) (180:\rs) (240:\rb) (300:\rs) };
						\end{scope}
						\begin{scope}[xshift = 0.5cm, rotate=\at]
							\draw[fill=lightgray] plot [smooth cycle] coordinates {(0:\rb) (60:\rs) (120:\rb) (180:\rs) (240:\rb) (300:\rs) };
						\end{scope}
						\begin{scope}[xshift = 1.4cm, yshift=-0.14cm, rotate=\ad]
							\draw[fill=lightgray] plot [smooth cycle] coordinates {(0:\rb) (60:\rs) (120:\rb) (180:\rs) (240:\rb) (300:\rs) };
						\end{scope}
						\begin{scope}[xshift = 0.9cm,yshift=-0.16cm, rotate=\af]
							\draw[fill=lightgray] plot [smooth cycle] coordinates {(0:\rb) (60:\rs) (120:\rb) (180:\rs) (240:\rb) (300:\rs) };
						\end{scope}
					\end{scope}
					\begin{scope}[shift = ($-1*(a)+(b)$)]
						\draw[opacity = 0.2] (0,0) -- (1,{1/sqrt(3)}) -- (2,0) -- (1,{-1/sqrt(3)}) -- cycle; 
						\begin{scope}[xshift = 1.1cm, yshift=0.28cm,rotate=\ao]
							\draw[fill=lightgray] plot [smooth cycle] coordinates {(0:\rb) (60:\rs) (120:\rb) (180:\rs) (240:\rb) (300:\rs) };
						\end{scope}
						\begin{scope}[xshift = 0.5cm, rotate=\at]
							\draw[fill=lightgray] plot [smooth cycle] coordinates {(0:\rb) (60:\rs) (120:\rb) (180:\rs) (240:\rb) (300:\rs) };
						\end{scope}
						\begin{scope}[xshift = 1.4cm, yshift=-0.14cm, rotate=\ad]
							\draw[fill=lightgray] plot [smooth cycle] coordinates {(0:\rb) (60:\rs) (120:\rb) (180:\rs) (240:\rb) (300:\rs) };
						\end{scope}
						\begin{scope}[xshift = 0.9cm,yshift=-0.16cm, rotate=\af]
							\draw[fill=lightgray] plot [smooth cycle] coordinates {(0:\rb) (60:\rs) (120:\rb) (180:\rs) (240:\rb) (300:\rs) };
						\end{scope}
					\end{scope}
					\begin{scope}[shift = ($2*(a)$)]
						\draw (1,0) node[rotate=30]{$\cdots$};
					\end{scope}
					\begin{scope}[shift = ($-2*(a)$)]
						\draw (1,0) node[rotate=210]{$\cdots$};
					\end{scope}
					\begin{scope}[shift = ($2*(b)$)]
						\draw (1,0) node[rotate=-30]{$\cdots$};
					\end{scope}
					\begin{scope}[shift = ($-2*(b)$)]
						\draw (1,0) node[rotate=150]{$\cdots$};
					\end{scope}
				\end{scope}
			\end{tikzpicture}
			\caption{Infinite, periodic system with unit cell $Y$ and lattice $\Lambda$.}
		\end{subfigure}
		\caption{Example illustrations of the unit cell and the infinite system of resonators.} \label{fig:geometry}
	\end{figure}
	
	\hl{The material parameters $\kappa$ and $\rho$ are assumed to be piecewise constant in $x$. The corresponding case without time-modulation has previously been extensively studied }\cite{ammari2018minnaert,ammari2020honeycomb,ammari2017subwavelength}, where $\rho$ and $\kappa$ satisfy
	\begin{equation}\label{eq:static}
		\kappa(x,t) =  \begin{cases}
			\kappa_0, & x \in \R^d \setminus \overline{\C}, \\ \kappa_\r, & x\in \C,
		\end{cases} \qquad \rho(x,t) =  \begin{cases}
			\rho_0, & x \in \R^d \setminus \overline{\C}, \\ \rho_\r, & x\in \C.
		\end{cases}\end{equation}
	We define the contrast parameter $\delta$ and the wave speeds $v_0, v_\r$ as
	\begin{equation}\delta = \frac{\rho_\r}{\rho_0}, \quad v_0 = \sqrt{\frac{\kappa_0}{\rho_0}}, \quad v_\r = \sqrt{\frac{\kappa_\r}{\rho_\r}}.\end{equation}	
	\hl{In this setting, subwavelength resonance requires a high material contrast in $\rho$:}
	\begin{equation}\delta \ll 1, \quad v_0,v_\r = O(1).\end{equation}	
	In this case, there are resonant frequencies $\omega$ (known as \emph{subwavelength} resonant frequencies) satisfying $\omega \to 0$ as $\delta \to 0$ (typically, in the problems considered here, $\omega$ scales as $\omega=O(\delta^{1/2})$).
	
	In this work, we will extend this theory to the time-dependent case when the material parameters $\kappa$ and $\rho$, in addition to being piecewise constant in $x$, also depend periodically on $t$ with frequency $\Omega$. Again, in order to achieve subwavelength resonance, we assume that $\rho$ is much smaller inside $\C$ compared to the outside:
	\begin{equation}\mathhl{\frac{\rho(x,t)}{\rho(y,t)} = O(\delta), \quad \text{for all }  x \in \C, \ y \in \R^d\setminus \overline{\C}, \ t\in \R,}\end{equation}
	where $\delta \ll 1$ is the high-contrast parameter. \hl{We will consider general $\Omega$, where, typically, the most interesting regime is $\Omega = O(\delta^{1/2})$ (which corresponds to the case when $\Omega$ has the same asymptotic behaviour as the resonances $\omega$). The current setting is designed in order to enhance the subwavelength nature of phenomena due to time-modulation. Related structures have been previously proposed and implemented, where the time-modulated material parameters are mainly achieved through controlling the resonator boundaries} \cite{psiachos2021band,sugino2020nonreciprocal,xu2020physical}.
	
	\subsection{Floquet-Bloch theory and quasiperiodic layer potentials}\label{sec:floquet}
	In this section we give a brief introduction to the Floquet-Bloch theory (for further details we refer, for example, to \cite{kuchment}), which is the typical technique used to study differential equations with periodic coefficients. We begin by outlining the theory in the case of ordinary systems of differential equations in the time variable $t$, 
	\begin{equation} \label{eq:ODE}
		y'(t) = A(t)y(t), \quad t\in \R,
	\end{equation}
	for some $N\times N$ matrix function $A(t)$ which is $T$-periodic and piecewise continuous in $t$. If $Y(t)$ denotes the (matrix-valued) fundamental solution, then Floquet's theorem states that there is a constant matrix $B$ such that 
	\begin{equation}Y(t) = e^{iBt}P(t),\end{equation}
	for some $T$-periodic matrix function $P$. For each eigenvalue $e^{\iu\omega}$ of $e^{\iu B}$, there is a \emph{Bloch solution} $y(t)$ satisfying the $\omega$-quasiperiodicity condition, \ie{} that $y(t)e^{-\iu \omega t}$ is $T$-periodic. If $e^{iB}$ is a diagonalizable matrix, there is a basis of Bloch solutions to \eqref{eq:ODE}. Observe that $\omega$, which we refer to as a \emph{quasifrequency}, is defined modulo $\Omega = \frac{2\pi}{T}$. Therefore, we define the (time-) Brillouin zone $\omega \in Y^*_t := \mathbb{C} / (\Omega \Z)$. Observe that we allow complex quasifrequencies. If $\omega$ is real, any $\omega$-quasiperiodic function $y(t)$ is bounded in $t$ and is said to be \emph{stable}.
	
 	Next, we define analogous concepts in higher dimensions. Given the lattice $\Lambda$ as defined above, a function $f(x)\in L^2(\R^d)$ is $\alpha$-quasiperiodic if $e^{-\iu \alpha \cdot x}f(x)$ is a $\Lambda$-periodic function of $x$. The quasiperiodicity (or \emph{quasimomentum}) $\alpha$ is defined modulo elements of the \emph{dual lattice} $\Lambda^*$, which is the lattice generated by the dual vectors $\alpha_1,...,\alpha_d$ defined through
	\begin{equation}\alpha_i\cdot l_j = 2\pi \delta_{i,j}, \quad i,j=1,...,d.\end{equation}
	The (space-) \textit{Brillouin zone} $Y^*$ is defined as the torus $Y^*:= \R^d /\Lambda^*$. Given a function $f\in L^2(\R^d)$, the Floquet transform is defined as
	\begin{equation}
		\F[f](x,\alpha) := \sum_{m\in \Lambda} f(x-m) e^{\iu \alpha\cdot m}.
	\end{equation}
	$\F[f]$ is always $\alpha$-quasiperiodic in $x$ and periodic in $\alpha$. The Floquet transform is an invertible map $\F:L^2(\R^d) \rightarrow L^2(Y\times Y^*)$ with inverse given by (see, for instance, \cite{MaCMiPaP,kuchment})
	\begin{equation*}
		\F^{-1}[g](x) = \frac{|Y|}{2\pi}\int_{Y^*} g(x,\alpha) \dx \alpha, \quad x\in \R^d.
	\end{equation*}
	
	\hl{Applying the Floquet transform to the wave equation }\eqref{eq:wave} in $x$, \hl{and seeking quasiperiodic solutions in $t$, we obtain the differential problem}
 	\begin{equation} \label{eq:wave_transf}
 	\begin{cases}\ \ds \left(\frac{\p }{\p t } \frac{1}{\kappa(x,t)} \frac{\p}{\p t} - \nabla \cdot \frac{1}{\rho(x,t)} \nabla\right) u(x,t) = 0,\\[0.3em]
 		\	u(x,t)e^{-\iu \alpha\cdot x} \text{ is $\Lambda$-periodic in $x$,}\\
 		\	u(x,t)e^{-\iu \omega t} \text{ is $T$-periodic in $t$}. 
 	\end{cases}
 	\end{equation} 
 	For a given $\alpha\in Y^*$, we seek $\omega\in Y_t^*$ such that there is a non-zero solution $u$ to \eqref{eq:wave_transf}. Such quasifrequencies $\omega(\alpha)$, as functions of $\alpha$, are known as \emph{band functions} and together constitute the band structure, or dispersion relationship, of the material. Observe that the band functions depend continuously on the high-contrast parameter $\delta$.
 	 	
 	We remark that the quasifrequencies as defined above can indeed be observed as a generalization of the usual concept of frequency in the unmodulated case. If both $\kappa$ and $\rho$ are constant in $t$, we can not define a minimal periodicity $T$. Requiring \eqref{eq:wave_transf} to hold for any $T$, we have that $u(x,t)$ is a time-harmonic wave $u(x,t) = v(x)e^{\iu \omega t}$ with (ordinary) frequency $\omega$. 
 	
 	For each $n\in \Z^ +$, we can represent the (time-) Brillouin zone $Y^*_t$ by the $n$\textsuperscript{th} Brillouin zone $Y^{*,n}_t$, which is an union of two strips:
 	\begin{equation}Y^{*,n}_t=\left[-\frac{n\Omega}{2},-\frac{(n-1)\Omega}{2}\right)\times \iu \R \cup \left[\frac{(n-1)\Omega}{2},\frac{n\Omega}{2}\right) \times \iu \R.\end{equation}
 	We can think of the collection of the $n$\textsuperscript{th} Brillouin zones as a lifting of the Brillouin zone $Y^*_t$ to $\mathbb{C}$. Since $e^{- \iu \omega t}u(x,t) $ is $T$-periodic in $t$ we have a Fourier series expansion as 
 	\begin{equation}u(x,t)= e^{\iu \omega t}\sum_{n = -\infty}^\infty v_n(x)e^{\iu n\Omega t}.\end{equation}
 	Although the quasifrequencies $\omega$ are defined modulo $\Omega\Z$, choosing a different representation of $\omega$ amounts to a shifting of the Fourier coefficients $v_n(x)$. This provides a way to associate, at least intuitively, each $\omega$ to a certain $n$\textsuperscript{th} Brillouin zone, where $n$ is chosen (in some sense) to minimise the oscillations of the Fourier series part of $u$.	This idea will be utilized in a precise manner in \Cref{def:sub} below.
 	
 	Due to the periodic nature of $Y_t^*$, the usual definition of subwavelength frequencies does not apply to quasifrequencies. For example, in the particular case when $\Omega = O(\delta^{1/2})$ (which will be of interest later on), the whole Brillouin zone scales as $O(\delta^{1/2})$ but will typically contain an infinite number of quasifrequencies which originate from folding of non-subwavelength frequencies. Due to this, we introduce the following definition.
 	\begin{defn} \label{def:sub}
 		A quasifrequency $\omega = \omega(\delta) \in Y^*_t$ of \eqref{eq:wave_transf} is said to be a \emph{subwavelength quasifrequency} if there is a corresponding Bloch solution $u(x,t)$, depending continuously on $\delta$, which can be written as
 		\begin{equation}u(x,t)= e^{\iu \omega t}\sum_{n = -\infty}^\infty v_n(x)e^{\iu n\Omega t},\end{equation}
 		where 
 		\begin{equation}\omega \rightarrow 0 \ \text{and} \ M\Omega \rightarrow 0 \ \text{as} \ \delta \to 0,\end{equation}
 		for some integer-valued function $M=M(\delta)$ such that, as $\delta \to 0$, we have
 		\begin{equation}\sum_{n = -\infty}^\infty \|v_n\|_{L^2(Y)} = \sum_{n = -M}^M \|v_n\|_{L^2(Y)} + o(1).\end{equation}
 	\end{defn}
 	In other words, a quasifrequency is a subwavelength quasifrequency if the corresponding Bloch solution only contains components which are either very small, or are in the subwavelength frequency regime. In the static case, we can choose $M=0$ and obtain the usual definition of a subwavelength frequency.
 	
 	\subsection{Capacitance matrix formulation of the static problem}\label{sec:cap_static}
 	\hl{Before analysing the time-modulated problem, we briefly review the subwavelength resonance of the static problem with constant parameters as defined in} \eqref{eq:static} \cite{ammari2018minnaert,ammari2020honeycomb, ammari2017subwavelength}.\hl{Considering time-harmonic solutions $u(x,t) = e^{\iu\omega t}v(x)$, we have from }\eqref{eq:wave_transf} \hl{that $v$ satisfies}
 	
 	\begin{equation} \label{eq:space}
 		\left\{
 		\begin{array} {ll}
 			\ds \Delta {v}+ \frac{\lambda\rho_0}{\kappa_0} {v}  = 0 & \text{in } \R^d \setminus \overline{\C}, \\[0.3em]
 			\ds \Delta {v}+\frac{\lambda\rho_\r}{\kappa_\r}{v}  = 0 & \text{in } \C, \\
 			\nm
 			\ds  {v}|_{+} -{v}|_{-}  = 0  & \text{on } \partial \C, \\
 			\nm
 			\ds  \delta \frac{\partial {v}}{\partial \nu} \bigg|_{+} - \frac{\partial {v}}{\partial \nu} \bigg|_{-} = 0 & \text{on } \partial \C, \\
 			v(x)e^{-\iu \alpha\cdot x} \ &\text{is $\Lambda$-periodic},
 		\end{array}
 		\right.
 	\end{equation}
 	where $\lambda= \omega^2$ and \hl{$|_{+,-}$ denote the limits from outside and inside $\C$, respectively.} Here, we have interpreted \eqref{eq:wave} in a weak sense, which leads to the so-called \emph{transmission conditions} posed on $\p \C$ in \eqref{eq:space}. We denote the band functions (or \emph{resonant frequencies}) of \eqref{eq:space} by $\omega^\alpha_{\mathrm{s},i}$ for $i=1,2,...$ (here, subscript $\mathrm{s}$ is short for \emph{static}). In other words,  \eqref{eq:space} admits at $\lambda = \left(\omega^\alpha_{\mathrm{s},i}\right)^2$ a non-zero solution. It is well-known that the first $2N$ band functions scale as $O(\delta^{1/2})$ and are thereby in the subwavelength regime for small $\delta$. $N$ of these band functions have positive real part. Next, we use the capacitance matrix formulation to get explicit asymptotic expansions of these subwavelength band functions when $\delta$ is small.
 	
 	Define the $\alpha$-quasiperiodic Green's function $G^{\alpha,k}(x,y)$ to satisfy
 	\begin{equation*}
 		\Delta_xG^{\alpha,k}(x,y) + k^2G^{\alpha,k}(x,y) = \sum_{n \in \Lambda} \delta(x-n)e^{\iu\alpha\cdot n}.
 	\end{equation*}
 	If $k \neq |\alpha+q|$ for all $q\in \Lambda^*$, it can be shown \cite{MaCMiPaP,ammari2009layer} that $G^{\alpha,k}$ is given by 
 	\begin{equation*}
 		G^{\alpha,k}(x,y)= \frac{1}{|Y|}\sum_{q\in \Lambda^*} \frac{e^{\iu(\alpha+q)\cdot (x-y)}}{ k^2-|\alpha+q|^2}.
 	\end{equation*}
 	Let $D\subset \R^d$ be as in \Cref{sec:formulation}. We define the quasiperiodic single layer potential $\mathcal{S}_D^{\alpha,k}: L^2(\partial D) \rightarrow H_{\textrm{loc}}^1(\R^d)$ by
 	\begin{equation}\mathcal{S}_D^{\alpha,k}[\phi](x) := \int_{\partial D} G^{\alpha,k} (x,y) \phi(y) \dx\sigma(y),\quad x\in \mathbb{R}^d.\end{equation}
 	Here, the space $H_{\textrm{loc}}^1(\R^d)$ consists of functions that are square integrable and with a square integrable weak first derivative, on every compact subset of $\R^d$. Taking the trace on $\p D$, it is well-known that $\mathcal{S}_D^{\alpha,0} : L^2(\p D) \rightarrow H^1(\p D)$ is invertible if $\alpha \neq  0$ \cite{MaCMiPaP}. Moreover, on the boundary $\p D$, $\mathcal{S}_D^{\alpha,k}$ satisfies the so-called \emph{jump relations}
 	\begin{equation} \label{eq:jump1}
 		\mathcal{S}_D^{\alpha,k}[\phi]\big|_+ = \mathcal{S}_D^{\alpha,k}[\phi]\big|_-,
 	\end{equation}
 	and
 	\begin{equation} \label{eq:jump2}
 		\frac{\p}{\p\nu} \mathcal{S}_D^{\alpha,k}[\phi] \Big|_{\pm} = \left( \pm \frac{1}{2} I +( \mathcal{K}_D^{-\alpha,k} )^*\right)[\phi],
 	\end{equation}
 	where  $I$ is the identity operator, $\partial/\partial \nu_x$ denotes the outward normal derivative at $x\in\p D$. Moreover, $(\mathcal{K}_D^{-\alpha,k})^*: L^2(\partial D) \rightarrow L^2(\partial D)$ is the quasiperiodic Neumann-Poincaré operator given by
 	\begin{equation} (\mathcal{K}_D^{-\alpha, k} )^*[\phi](x):= \int_{\p D} \frac{\p}{\p\nu_x} G^{\alpha,k}(x,y) \phi(y) \dx\sigma(y).\end{equation}
 	For low frequencies, \ie{} as $k \to0$, we have the asymptotic expansions
 	\begin{equation}\label{eq:Sexp}
 		\mathcal{S}_D^{\alpha,k} = \mathcal{S}_D^{\alpha,0} + O(k^2),
 	\end{equation}
 	and 
 	\begin{equation}\label{eq:expK}
 		(\mathcal{K}_D^{-\alpha,k})^* = (\mathcal{K}_D^{-\alpha,0})^* + O(k^2).
 	\end{equation}
 	Moreover, we have the following well-known integration formula (see, for example, \cite{ammari2020honeycomb}),
 	\begin{equation}\label{eq:intK}
 		\int_{\p D_i} \left(\frac{1}{2}I + (\mathcal{K}_D^{-\alpha,0})^*\right)[\phi] \dx \sigma = \int_{\p D_i} \phi \dx \sigma.
 	\end{equation}
 	We are now ready to state the capacitance formulation of the problem. For $\alpha \neq 0$, the basis functions $\psi_i^\alpha$ and the capacitance coefficients $C_{ij}^\alpha$ are defined as
 	\begin{equation}\label{eq:psiC}
 		\psi_i^\alpha = \left(\mathcal{S}_D^{\alpha,0}\right)^{-1}[\chi_{\p D_i}], \qquad C_{ij}^\alpha= -\int_{\p D_i} \psi_j^\alpha   \dx \sigma,
 	\end{equation}
 	for $i,j=1,...,N$. The capacitance matrix $C^\alpha$ is defined as the matrix $C^\alpha = \left(C_{ij}^\alpha\right)$. For simplicity we state the result when all resonators have equal volume. We then have the following result \cite{ammari2017subwavelength,ammari2020honeycomb}.
 	\begin{thm} \label{thm:static}
 		\hl{
 		For $|\alpha|\neq0$, the band functions $\omega^\alpha_{\mathrm{s},i},~i=1,...,N$ can be approximated as}
 		\begin{equation}
 			\omega^\alpha_{\mathrm{s},i}= \sqrt{\frac{\delta \lambda_i^\alpha }{|D_1|}}  v_\r + O(\delta),
 		\end{equation}
 		\hl{where $|D_1|$ is the volume of one resonator and $\lambda_i^\alpha,~i=1,...,N$ are the eigenvalues of the capacitance matrix $C^\alpha$.}
 	\end{thm}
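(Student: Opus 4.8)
The plan is to recast the transmission problem \eqref{eq:space} as a boundary integral system via the quasiperiodic single layer potentials, and then to extract the subwavelength band functions by a low-frequency asymptotic analysis that collapses the integral equation onto the finite-dimensional eigenvalue problem for $C^\alpha$. First I would represent the Bloch eigenfunction as
\[
v = \begin{cases} \mathcal{S}_D^{\alpha,k}[\psi] & \text{in } Y\setminus\overline{D},\\[0.2em] \mathcal{S}_D^{\alpha,k_\r}[\phi] & \text{in } D, \end{cases} \qquad k=\frac{\omega}{v_0}, \quad k_\r=\frac{\omega}{v_\r},
\]
for densities $\psi,\phi\in L^2(\p D)$; by construction $v$ is $\alpha$-quasiperiodic and solves the two Helmholtz equations in \eqref{eq:space}. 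Imposing $v|_+=v|_-$ with the jump relation \eqref{eq:jump1} gives $\mathcal{S}_D^{\alpha,k}[\psi]=\mathcal{S}_D^{\alpha,k_\r}[\phi]$ on $\p D$, while $\delta\,\p_\nu v|_+=\p_\nu v|_-$ with \eqref{eq:jump2} gives $\delta(\tfrac12 I+(\mathcal{K}_D^{-\alpha,k})^*)[\psi]=(-\tfrac12 I+(\mathcal{K}_D^{-\alpha,k_\r})^*)[\phi]$. Hence the resonant frequencies are exactly the $\omega$ (for fixed $\alpha\neq 0$ and $\delta$) at which this $2\times 2$ block system $\mathcal{A}(\omega,\delta)[\psi,\phi]^{\mathsf T}=0$ has a nontrivial kernel.

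Next, using the ansatz $\omega=O(\delta^{1/2})$, so that $k^2,k_\r^2=O(\delta)$, and the expansions \eqref{eq:Sexp}--\eqref{eq:expK}, I would analyse this system order by order. At leading order the first equation reads $\mathcal{S}_D^{\alpha,0}[\psi-\phi]=O(\delta)$, and since $\mathcal{S}_D^{\alpha,0}$ is invertible for $\alpha\neq0$ this forces $\phi=\psi+O(\delta)$. The limiting profile $V=\lim_{\delta\to0}v$ is harmonic in each $D_i$ with $\p_\nu V|_-=0$ (the interior Neumann data appears only at order $\delta$ on the left of the second equation), so $V$ is constant on each connected resonator, say $V\equiv x_i$ on $D_i$. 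Consistently, the leading density is $\psi_0=\sum_j x_j\psi_j^\alpha$, since $\mathcal{S}_D^{\alpha,0}[\psi_j^\alpha]=\chi_{\p D_j}$ reproduces these constant boundary values; this identifies the density with the vector $x=(x_1,\dots,x_N)$.

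The crux is to integrate the second transmission equation over each $\p D_i$. On the left, the integration formula \eqref{eq:intK} gives $\delta\int_{\p D_i}(\tfrac12 I+(\mathcal{K}_D^{-\alpha,0})^*)[\psi]\,\dx\sigma=\delta\int_{\p D_i}\psi\,\dx\sigma+O(\delta^2)$, which by the definition of $C^\alpha$ in \eqref{eq:psiC} equals $-\delta(C^\alpha x)_i+O(\delta^2)$. On the right, applying the divergence theorem to $u=\mathcal{S}_D^{\alpha,k_\r}[\phi]$ (which solves $(\Delta+k_\r^2)u=0$ in $D_i$) gives
\[
\int_{\p D_i}\Big(-\tfrac12 I+(\mathcal{K}_D^{-\alpha,k_\r})^*\Big)[\phi]\,\dx\sigma = \int_{\p D_i}\p_\nu u|_-\,\dx\sigma = \int_{D_i}\Delta u\,\dx x = -k_\r^2\int_{D_i}u\,\dx x,
\]
which at leading order equals $-\tfrac{\omega^2}{v_\r^2}|D_1|\,x_i$. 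Equating the two sides yields the discrete eigenvalue problem $C^\alpha x=\tfrac{\omega^2|D_1|}{\delta v_\r^2}\,x$, whence $\tfrac{\omega^2|D_1|}{\delta v_\r^2}=\lambda_i^\alpha$ and $\omega^\alpha_{\mathrm s,i}=\sqrt{\delta\lambda_i^\alpha/|D_1|}\,v_\r+O(\delta)$.

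The main obstacle is the rigorous justification rather than this formal balance. One must show that $\omega\mapsto\mathcal{A}(\omega,\delta)$ is an analytic family of Fredholm operators, control the remainders uniformly in $\delta$, and invoke the generalized Rouch\'e theorem (Gohberg--Sigal theory) to conclude that in the subwavelength regime $\mathcal{A}$ has exactly $N$ characteristic values with positive real part, each within $O(\delta)$ of a root of the leading-order problem and depending continuously on $\delta$. Additional care is needed to confirm that the limiting interior profile is genuinely constant on each resonator, so that $\int_{D_i}u\,\dx x=|D_1|x_i+o(1)$, and that the $O(\delta)$ discrepancy between $\phi$ and $\psi$ does not perturb the leading balance after integration; both follow from the uniform invertibility of $\mathcal{S}_D^{\alpha,0}$ for $\alpha\neq0$ together with the structure of the next-order terms in \eqref{eq:Sexp}--\eqref{eq:expK}.
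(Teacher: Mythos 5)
Your proposal is correct and follows essentially the same route as the paper's source for this result (the theorem is quoted from \cite{ammari2017subwavelength,ammari2020honeycomb}, whose proof uses exactly this layer-potential representation, the jump relations \eqref{eq:jump1}--\eqref{eq:jump2}, the expansions \eqref{eq:Sexp}--\eqref{eq:expK}, the integration identity \eqref{eq:intK}, and Gohberg--Sigal theory to justify the asymptotics); it is also the static specialization of the derivation the paper itself carries out for \Cref{thm:main} in \Cref{sec:cap}. The sign bookkeeping ($C^\alpha x = \tfrac{\omega^2|D_1|}{\delta v_\r^2}x$ via $C_{ij}^\alpha=-\int_{\p D_i}\psi_j^\alpha\dx\sigma$) and the identification of the rigor gaps to be closed are both accurate.
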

 	\hl{The capacitance formulation provides a method to integrate} \eqref{eq:space} \hl{which is posed on the spatial domain. In particular, for small $\delta$, the solutions to} \eqref{eq:space} \hl{are approximately constant inside each resonator. Therefore, the continuous differential equation reduces to a discrete equation where the solutions are determined by these constant values (which are given by the eigenvectors of $C^\alpha$). For more details, we refer to} \cite{ammari2018minnaert,ammari2017subwavelength,ammari2020honeycomb}.
 	
	\section{Uniformly modulated high-contrast resonators} \label{sec:uniform}
	In this section, we study the case when the time-modulation is applied uniformly in space. In other words, the modulation occurs both outside and inside the resonators with some envelopes $\kappa_t$, $\rho_t$ as follows:
	\begin{equation}\label{eq:uniform}
	\kappa(x,t) = \kappa_x(x)\kappa_t(t), \qquad \rho(x,t) = \rho_x(x)\rho_t(t),
	\end{equation}
	where the factors $\kappa_x$ and  $\rho_x$ describe the spatial parts and are defined as 
	\begin{equation}\label{eq:param_crystal}
	\kappa_x(x) =  \begin{cases}
	\kappa_0, & x \in \R^d \setminus \overline{\C}, \\ \kappa_\r, & x\in \C,
	\end{cases} \qquad \rho_x(x) =  \begin{cases}
	\rho_0, & x \in \R^d \setminus \overline{\C}, \\ \rho_\r, & x\in \C.
	\end{cases}
	\end{equation}
	We assume that $\rho_t$ is piecewise continuous and that $\kappa_t\in C^1(\R)$.
	We seek solutions to \eqref{eq:wave_transf} by separation of variables. Writing $u(x,t) = \Phi(t) v(x)$, we find that
	\begin{equation}\label{eq:time}
		\left\{
		\begin{array} {ll}
		\ds \frac{\dx }{\dx t}\frac{1}{\kappa_t(t)} \frac{\dx}{\dx t}\Phi(t) +\frac{\lambda}{\rho_t(t)}\Phi(t) = 0, \\[0.3em]
		\ds \Phi(t)e^{-\iu \omega t} \ \text{ is $T$-periodic},
		\end{array}
	\right.
	\end{equation}
	and that the spatial part $v$ satisfies \eqref{eq:space}, \ie{} the same equation as in the static case. Substituting $\Phi(t) = \sqrt{\kappa_t(t)} \Psi(t)$ in \eqref{eq:time}, we obtain the following result.

	\begin{prop} \label{prop:infinite}
		Assume that the material parameters are given by \eqref{eq:uniform} and  \eqref{eq:param_crystal}. Then, the quasifrequencies $\omega = \omega(\alpha) \in Y^*_t$ to the wave equation \eqref{eq:wave} are given by the quasifrequencies of the equation
		\begin{equation}\Psi''(t) +  \left(\left(\omega^\alpha_i(t)\right)^2 + \frac{\sqrt{\kappa_t}}{2}\frac{\dx}{\dx t} \frac{\kappa_t'}{\kappa_t^{3/2}}\right)\Psi(t) = 0,\end{equation}
		for $i=1,2,...$. Here, $\omega_i^\alpha(t)$ are the instantaneous resonant frequencies defined by $\omega_i^ \alpha(t) = \omega^\alpha_{\mathrm{s},i}\sqrt{\frac{\kappa_t(t)}{\rho_t(t)}}$ for $\alpha\in Y^*$.
	\end{prop}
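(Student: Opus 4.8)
The plan is to make the separation of variables explicit, reduce the resulting time-dynamics to the stated Hill-type equation through a Liouville substitution, and then transfer the quasiperiodicity condition from $\Phi$ to $\Psi$.

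First I would substitute the product ansatz $u(x,t) = \Phi(t)v(x)$ together with the factored parameters \eqref{eq:uniform}--\eqref{eq:param_crystal} into the Bloch-transformed equation \eqref{eq:wave_transf}. Because $\kappa_x,\rho_x$ depend only on $x$ while $\kappa_t,\rho_t$ depend only on $t$, the operator factors and one obtains $\frac{v}{\kappa_x}\frac{\dx}{\dx t}\frac{1}{\kappa_t}\frac{\dx\Phi}{\dx t} - \frac{\Phi}{\rho_t}\nabla\cdot\frac{1}{\rho_x}\nabla v = 0$. Requiring $v$ to solve the spatial eigenvalue problem $\nabla\cdot\frac{1}{\rho_x}\nabla v = -\lambda\frac{1}{\kappa_x}v$ with the quasiperiodic transmission conditions, i.e.\ exactly \eqref{eq:space}, makes both terms carry the common factor $v/\kappa_x$, which cancels and leaves precisely the Sturm--Liouville ODE \eqref{eq:time} for $\Phi$. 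This also forces the separation constant to be $\lambda = (\omega^\alpha_{\mathrm{s},i})^2$ with $v$ an eigenfunction of \eqref{eq:space}, the resonant frequencies of \Cref{thm:static}.

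Next I would perform the substitution $\Phi(t) = \sqrt{\kappa_t(t)}\,\Psi(t)$ to remove the first-order term. Expanding $\frac{\dx}{\dx t}\big(\kappa_t^{-1}\tfrac{\dx}{\dx t}(\kappa_t^{1/2}\Psi)\big)$ and multiplying through by $\kappa_t^{1/2}$, the $\Psi'$ contributions cancel and one is left with $\Psi'' + \big(\lambda\kappa_t/\rho_t + \tfrac{1}{2}\kappa_t^{-1}\kappa_t'' - \tfrac{3}{4}\kappa_t^{-2}(\kappa_t')^2\big)\Psi = 0$. The definition $\omega_i^\alpha(t) = \omega^\alpha_{\mathrm{s},i}\sqrt{\kappa_t/\rho_t}$ identifies the leading coefficient as $(\omega_i^\alpha(t))^2$, and the elementary identity $\frac{\sqrt{\kappa_t}}{2}\frac{\dx}{\dx t}\frac{\kappa_t'}{\kappa_t^{3/2}} = \tfrac{1}{2}\kappa_t^{-1}\kappa_t'' - \tfrac{3}{4}\kappa_t^{-2}(\kappa_t')^2$ matches the remaining terms, giving the stated equation. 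The hypothesis $\kappa_t\in C^1$ guarantees these manipulations are admissible. For the quasiperiodicity, since $\kappa_t$ is $T$-periodic and positive, so is $\sqrt{\kappa_t}$; hence $\Phi(t)e^{-\iu\omega t}$ is $T$-periodic if and only if $\Psi(t)e^{-\iu\omega t}$ is, so $\Phi$ and $\Psi$ share the same quasifrequencies $\omega$.

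The step requiring the most care, which I regard as the main obstacle, is \emph{completeness}: to conclude that the quasifrequencies of \eqref{eq:wave_transf} are exactly the union over $i$ of those of the $\Psi$-equations, rather than merely a subset produced by separation of variables. I would address this by invoking that the quasiperiodic transmission problem \eqref{eq:space} is self-adjoint with discrete spectrum, so its eigenfunctions $\{v_i^\alpha\}$ form a complete basis of the relevant weighted $L^2(Y)$ space; expanding an arbitrary Bloch solution of \eqref{eq:wave_transf} in this basis decouples the problem mode-by-mode into copies of \eqref{eq:time}, showing the separated solutions exhaust all quasifrequencies. The intervening derivative computations are routine and I would not spell them out in full.
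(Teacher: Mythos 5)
Your proposal is correct and takes essentially the same route as the paper: separation of variables $u(x,t)=\Phi(t)v(x)$ reducing to the spatial problem \eqref{eq:space} with $\lambda=(\omega^\alpha_{\mathrm{s},i})^2$ and the temporal equation \eqref{eq:time}, followed by the Liouville substitution $\Phi=\sqrt{\kappa_t}\,\Psi$, with your expansion of the potential term matching the identity $\frac{\sqrt{\kappa_t}}{2}\frac{\dx}{\dx t}\frac{\kappa_t'}{\kappa_t^{3/2}}=\frac{\kappa_t''}{2\kappa_t}-\frac{3(\kappa_t')^2}{4\kappa_t^2}$ exactly. Your additional completeness argument via self-adjointness of the quasiperiodic transmission problem addresses a point the paper leaves implicit in its ``we seek solutions by separation of variables'' phrasing, and strengthens rather than alters the argument.
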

	
	\begin{rmk}
		\hl{Since the spatial part $v$ satisfies the same equation as in the static case, }\eqref{eq:time} \hl{reveals the main effects due to this uniform modulation. Instead of being time-harmonic solutions, the eigenmodes $v(x)$ have time dynamics which are governed by the solutions $\Phi(t)$ to equation} \eqref{eq:time}.
	\end{rmk}
	
	\begin{rmk}
		The ordinary differential equation in \Cref{prop:infinite} is a type of \emph{Hill equation}. The proposition shows that the band structure of the modulated system is specified by a Hill equation in terms of the band structure of the static system. In particular, the static and modulated systems will have the same degeneracies (modulo $\Omega$). This limits the range of possible phenomena that can be induced with the uniform modulation specified in \eqref{eq:uniform} and \eqref{eq:param_crystal}. In \Cref{sec:resonatormod} we consider a different type of modulation which can induce richer phenomena. 
	\end{rmk}

	\begin{rmk}\label{rmk:impedance},
		\hl{In the case when the wave impedance is constant in $t$, the Hill equation is easily solved. In the current framework, the wave impedance is given by $\sqrt{\kappa\rho}$, so a constant impedance is equivalent to $\rho_t(t) = 1/\kappa_t(t)$. If we let $\lambda = (\omega_{\mathrm{s},i}^\alpha)^2$, we can simplify} \eqref{eq:time} \hl{by substitution as follows:}
		\begin{equation}\label{eq:diagonal}
			\frac{\dx }{\dx t} \Psi(t) = \iu \omega_{\mathrm{s},i}^\alpha \kappa_t(t) \begin{pmatrix} 1 & 0 \\ 0 & -1\end{pmatrix} \Psi(t), \quad \text{where} \quad
			\Psi = \begin{pmatrix} \iu\omega_{\mathrm{s},i}^\alpha \Phi + \frac{1}{\kappa_t(t)} \frac{\dx \Phi}{\dx t} \\[0.3em] \iu\omega_{\mathrm{s},i}^\alpha \Phi - \frac{1}{\kappa_t(t)} \frac{\dx \Phi}{\dx t} \end{pmatrix}.
		\end{equation}
		\hl{Then the quasifrequencies $\omega$ of} \eqref{eq:diagonal} \hl{are given by} \begin{equation}
			\omega = \omega_{\mathrm{s},i}^\alpha\widetilde\kappa, \qquad \widetilde \kappa = \frac{1}{T}\int_{0}^{T}\kappa_t(t)\dx t.
		\end{equation}
		\hl{In other words, the quasifrequencies of the modulated system coincide with those of the static, averaged, system. Any phenomena emerging from the time-modulation, \textit{e.g.} $k$-gaps, therefore cannot occur in this special case} \cite{koutserimpas2018electromagnetic,martinez2017standing,martinez2016temporal}.
	\end{rmk}
	
	\subsection{Sinusoidal time-modulation}
	In this section, we study consequences of \Cref{prop:infinite} in the case when the modulation is sinusoidal with some amplitude $\varepsilon$ and frequency $\Omega$.
	
	\subsubsection{Modulation of $\rho$} \label{sec:rhomod}
	We begin by studying the case
	\begin{equation}\kappa_t(t) = 1, \qquad \rho_t(t) = \frac{1}{1 + \varepsilon\cos(\Omega t)}, \quad 0 \leq \varepsilon < 1.\end{equation}
	Then, setting 
	\begin{equation}\tau = \frac{\Omega t}{2}, \qquad a = \left(\frac{2\omega_{\mathrm{s},i}^\alpha}{\Omega}\right)^2, \qquad q = -2\varepsilon\left(\frac{\omega_{\mathrm{s},i}^\alpha}{\Omega}\right)^2,\end{equation}
	we obtain the equation
	\begin{equation}\label{eq:mathieu}
	\phi''(\tau) + (a-2q\cos(2\tau))\phi(\tau) = 0,
	\end{equation}
	where $\phi$ is defined as $\phi(\tau) = \Psi(t)$. Equation \eqref{eq:mathieu} is known as \emph{Mathieu's equation}, and the quasifrequencies $\nu$ of this equation are known as the \emph{characteristic exponents}. Observe that $\nu$ and $\omega$ are related as
	\begin{equation}\omega = \frac{\Omega\nu}{2}.\end{equation}
	In the limit as $\delta \rightarrow 0$, the behaviour is fundamentally dependent on how $\varepsilon$ and $\Omega$ scale with $\delta$. In particular, some cases allow complex band functions (\ie{} unstable solutions, or $k$-gaps). Crucially, we always have $|q| < a/2$, which means that complex bands occur around points where $\omega_{\mathrm{s},i}^\alpha$ is close to $\frac{n\Omega}{2}, n \in \Z$ \cite{mclachlan1951theory}. As $\delta \to 0$, the subwavelength quasifrequencies are the quasifrequencies associated to the subwavelength static frequencies $\omega_{\mathrm{s},i}^\alpha$ which scale as $O(\delta^{1/2})$.
	
	\smallskip
	
	\noindent \textbf{Case (i): $\varepsilon=o(1)$ as $\delta \rightarrow 0$.}
	\smallskip
	
	In this case, we have that $q/a = o(1)$. From the theory of Mathieu's equation, we have that \cite{mclachlan1951theory}
	\begin{equation}\nu = \pm\sqrt{a}\big(1+o(1)\big).\end{equation}
	It follows that, to leading order, the band structure $\omega(\alpha)$ coincides with the unmodulated case:  $\omega(\alpha) = \omega_{\mathrm{s},i}^\alpha + o(1)$.
	
	\smallskip
	
	\noindent \textbf{Case (ii): $\Omega=O(\delta^{1/2})$ and $\varepsilon$ is fixed as $\delta \rightarrow 0$.}
	\smallskip
	
	In this case, we have that $\Omega$ and $\omega_{\mathrm{s},i}^\alpha$ have the same scaling in $\delta$, and so $a,q=O(1)$. Choosing $\Omega$ within the band region of the static problem, we expect complex band functions. Notably, to create a subwavelength $k$-gap, we require all subwavelength quasifrequencies to be complex. In the case $N=1$, \ie{} a single resonator inside the unit cell, both subwavelength bands will open into $k$-gaps around the same point (as in \Cref{fig:SLc} below). In the case of multiple resonators, $N>1$, a $k$-gap can be achieved by centring the complex frequencies around a degeneracy of the static problem (as in \Cref{fig:HLc} below).
	
	\smallskip
	
	\noindent \textbf{Case (iii): $\Omega$ and $\varepsilon$ are fixed as $\delta \rightarrow 0$.}
	\smallskip
	
	In this case we have $a,q=O(\delta)$, which implies that $\nu = O(\delta^{1/2})$. Consequently, there will be subwavelength quasifrequencies $\omega = O(\delta^{1/2})$, even though $\Omega$ is constant. For $a,q$ around $0$ and $a>0$, the Bloch solutions are stable \cite{mclachlan1951theory} and there are no $k$-gaps in this case.	
	
	\begin{rmk}
		We can alternatively study the case of step-like changes in the material parameters, where $\rho_t$ changes between the values $\rho_1$ and $\rho_2$ at time $t_0$. Instead of the Mathieu equation, we obtain in this case the Meissner equation,
		\begin{equation}\Psi''(t) + \left(\omega^\alpha_i(t)\right)^2\Psi(t) = 0, \qquad  \omega^\alpha_i(t) = \begin{cases}\ds \frac{\omega_{\mathrm{s},i}^\alpha}{\sqrt{\rho_1}}, \quad & -\frac{\Omega}{2} < t < t_0, \\[0.7em] \ds \frac{\omega_{\mathrm{s},i}^\alpha}{\sqrt{\rho_2}}, & t_0 < t < \frac{\Omega}{2},\end{cases}\end{equation} analogously to \cite{koutserimpas2018electromagnetic} but now posed in terms of the instantaneous resonant frequencies $\omega_i^\alpha(t)$. \hl{Qualitatively, the behaviour in this case is similar to the Mathieu equation studied before.}
	\end{rmk}
	
	\subsubsection{Modulation of $\rho$ and $\kappa$}
	Here, we briefly mention the case when sinusoidal modulation is applied to both $\rho$ and $\kappa$. We assume
	\begin{equation}\kappa_t(t) = \frac{1}{1 + \varepsilon\cos(\Omega t)}, \qquad \rho_t(t) = \frac{1}{1 + \varepsilon\cos(\Omega t)}, \quad 0 \leq \varepsilon < 1.\end{equation}
	From \Cref{prop:infinite} we then find
	\begin{equation}\Psi'' + \left(\left(\omega_i^\alpha\right)^2 + \frac{\varepsilon \Omega^2}{2\left(1+\varepsilon\cos(\Omega t)\right)^2 }\left(\cos(\Omega t) + \frac{\varepsilon}{4}\big(3 + \cos(2\Omega t)\big)\right) \right) \Psi = 0.\end{equation}
	In general, we cannot obtain the quasifrequencies of this Hill equation in a closed form. Nevertheless, under the assumption that $\varepsilon = o(1)$, to leading order we obtain the Mathieu equation
	\begin{equation}
		\phi''(\tau) + (a-2q\cos(2\tau))\phi(\tau) = 0, \quad \text{where} \quad \tau = \frac{\Omega t}{2}, \quad \phi(\tau) = \Psi(t), \quad a = \left(\frac{2\omega_{\mathrm{s},i}^\alpha}{\Omega}\right)^2, \quad q = -\varepsilon.
	\end{equation}
	Now we consider the limit as $\delta \rightarrow 0$. If $\Omega = O(\sqrt{\delta})$, we have $q/a = o(1)$, as in Case (i) of the previous section. However, if $\Omega$ is fixed and $\varepsilon = O(\delta)$, we have $a,q = O(\delta)$, which corresponds to Case (iii) in the previous section.

		\subsection{Numerical computations} \label{sec:num_unimod}
		Here, and throughout this work, we perform the numerical computations in a two-dimensional structure with circular resonators of radius $R=0.1$ with static material parameters $\rho_0 = \kappa_0 = 1, \rho_\r = \kappa_\r=9000$. In this section we compute the band structure in the case
		\begin{equation}\kappa_t(t) = 1, \qquad \rho_t(t) = \frac{1}{1 + \varepsilon\cos(\Omega t)}, \quad 0 \leq \varepsilon < 1,\end{equation}
		for two different geometries. Here, the static band structure $\omega_{\mathrm{s},i}$ is computed using the multipole method as in \cite[Appendix C]{ammari2017subwavelength}. \hl{The characteristic exponents can be efficiently approximated through truncation of the determinant of an infinite matrix} (see \textit{e.g.} \cite[\S4.23]{mclachlan1951theory},\cite{strang2005characteristic}).
		
		\subsubsection{Square lattice of resonators}\label{sec:SL_uni}
		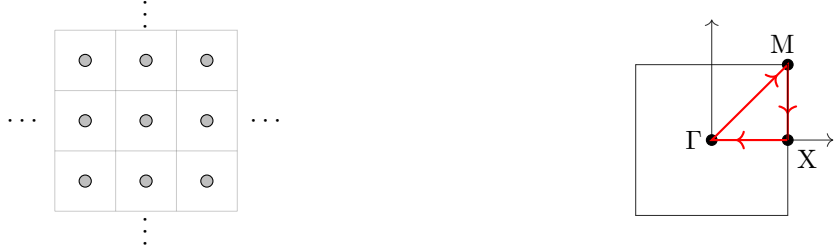
\begin{figure}[p]
			\begin{subfigure}[b]{0.48\linewidth}
				\centering
				\begin{tikzpicture}[scale=0.8]
					\begin{scope}[xshift=-5cm,scale=1]
						\pgfmathsetmacro{\r}{0.1pt} 
						\coordinate (a) at (1,0);		
						\coordinate (b) at (0,1);
						
						\draw[opacity=0.2] (0,0) -- (1,0);
						\draw[opacity=0.2] (0,0) -- (0,1);
						\draw[fill=lightgray] (0.5,0.5) circle(\r); 
						
						\begin{scope}[shift = (a)]					
							\draw[opacity=0.2] (1,0) -- (1,1);
							\draw[opacity=0.2] (0,0) -- (1,0);
							\draw[opacity=0.2] (0,0) -- (0,1);
							\draw[fill=lightgray] (0.5,0.5)  circle(\r); 
						\end{scope}
						\begin{scope}[shift = (b)]
							\draw[opacity=0.2] (1,1) -- (0,1);
							\draw[opacity=0.2] (0,0) -- (1,0);
							\draw[opacity=0.2] (0,0) -- (0,1);
							\draw[fill=lightgray] (0.5,0.5) circle(\r); 
						\end{scope}
						\begin{scope}[shift = ($-1*(a)$)]
							\draw[opacity=0.2] (0,0) -- (1,0);
							\draw[opacity=0.2] (0,0) -- (0,1);
							\draw[fill=lightgray] (0.5,0.5)  circle(\r); 
						\end{scope}
						\begin{scope}[shift = ($-1*(b)$)]
							\draw[opacity=0.2] (0,0) -- (1,0);
							\draw[opacity=0.2] (0,0) -- (0,1);
							\draw[fill=lightgray] (0.5,0.5) circle(\r); 
						\end{scope}
						\begin{scope}[shift = ($(a)+(b)$)]
							\draw[opacity=0.2] (1,0) -- (1,1) -- (0,1);
							\draw[opacity=0.2] (0,0) -- (1,0);
							\draw[opacity=0.2] (0,0) -- (0,1);
							\draw[fill=lightgray] (0.5,0.5) circle(\r);
						\end{scope}
						\begin{scope}[shift = ($-1*(a)-(b)$)]
							\draw[opacity=0.2] (0,0) -- (1,0);
							\draw[opacity=0.2] (0,0) -- (0,1);
							\draw[fill=lightgray] (0.5,0.5) circle(\r);
						\end{scope}
						\begin{scope}[shift = ($(a)-(b)$)]
							\draw[opacity=0.2] (1,0) -- (1,1);
							\draw[opacity=0.2] (0,0) -- (1,0);
							\draw[opacity=0.2] (0,0) -- (0,1);
							\draw[fill=lightgray] (0.5,0.5) circle(\r);
						\end{scope}
						\begin{scope}[shift = ($-1*(a)+(b)$)]
							\draw[opacity=0.2] (1,1) -- (0,1);
							\draw[opacity=0.2] (0,0) -- (1,0);
							\draw[opacity=0.2] (0,0) -- (0,1);
							\draw[fill=lightgray] (0.5,0.5) circle(\r);
						\end{scope}
						\begin{scope}[shift = ($2*(a)$)]
							\draw (0.5,0.5) node[rotate=0]{$\cdots$};
						\end{scope}
						\begin{scope}[shift = ($-2*(a)$)]
							\draw (0.5,0.5) node[rotate=0]{$\cdots$};
						\end{scope}
						\begin{scope}[shift = ($2*(b)$)]
							\draw (0.5,0.3) node[rotate=90]{$\cdots$};
						\end{scope}
						\begin{scope}[shift = ($-2*(b)$)]
							\draw (0.5,0.7) node[rotate=90]{$\cdots$};
						\end{scope}
					\end{scope}
				\end{tikzpicture}
				\caption{Circular resonators in square lattice.}
			\end{subfigure}
			\begin{subfigure}[b]{0.48\linewidth}
				\centering
				\begin{tikzpicture}[scale=2]	
					\coordinate (a) at ({1/sqrt(3)},1);		
					\coordinate (b) at ({1/sqrt(3)},-1);
					\coordinate (c) at ({2/sqrt(3)},0);
					\coordinate (M) at (0.5,0.5);
					\coordinate (M2) at (-0.5,0.5);
					\coordinate (M3) at (-0.5,-0.5);
					\coordinate (M4) at (0.5,-0.5);
					\coordinate (X) at (0.5,0);
					
					\draw[->,opacity=0.8] (0,0) -- (0.8,0);
					\draw[->,opacity=0.8] (0,0) -- (0,0.8);
					
					\draw[fill] (M) circle(1pt) node[yshift=8pt, xshift=-2pt]{M}; 
					\draw[fill] (0,0) circle(1pt) node[left]{$\Gamma$}; 
					\draw[fill] (X) circle(1pt) node[below right]{X}; 
					
					\draw[thick, postaction={decorate}, decoration={markings, mark=at position 0.2 with {\arrow{>}}, markings, mark=at position 0.65 with {\arrow{>}}, markings, mark=at position 0.9 with {\arrow{>}}}, color=red]
					(X) -- (0,0) -- (M) -- (X);
					\draw[opacity=0.8] (M) -- (M2) -- (M3) -- (M4) -- cycle; 
				\end{tikzpicture}
				\vspace{15pt}
				
				\caption{Brillouin zone and the symmetry points $\Gamma$, $\mathrm{X}$ and $\mathrm{M}$.}
			\end{subfigure}
			\caption{Illustration of the square lattice, and corresponding Brillouin zone. The red path shows the points where the band functions are computed in \Cref{fig:SLuni}.} \label{fig:square_uni}
		\end{figure}
		We begin by considering resonators in a square lattice defined through the lattice vectors
		\begin{equation}l_1 = \begin{pmatrix}1\\0\end{pmatrix}, \quad l_2 = \begin{pmatrix}0\\1\end{pmatrix}.\end{equation}
		The lattice and corresponding Brillouin zone is illustrated in \Cref{fig:square_uni}. The symmetry points in $Y^*$ are given by $\Gamma = (0,0), \ \text{M} = (\pi,\pi)$ and $\text{X}=(\pi,0)$.
		
		The static ($\varepsilon = 0$) subwavelength band structure of the system is illustrated in \Cref{fig:SLa}. Although often restricted to positive frequencies, the band structure is symmetric around $\omega = 0$. \Cref{fig:SLb} shows the same band structure, but folded around the frequency $\Omega = 0.2$, which is the same frequency as in \Cref{fig:SLc} where modulation occurs with $\varepsilon = 0.3$. When the modulation is introduced, the frequencies at the edges of the Brillouin zone open into $k$-gaps. \hl{We observe that the group velocity, defined as the slope of the band functions, become infinite at the edges of these $k$-gaps} \cite{martinez2017standing}.
		
		\begin{figure}[p] 
			\begin{subfigure}[b]{0.3\linewidth}
				\begin{center}
					\includegraphics[width=1\linewidth]{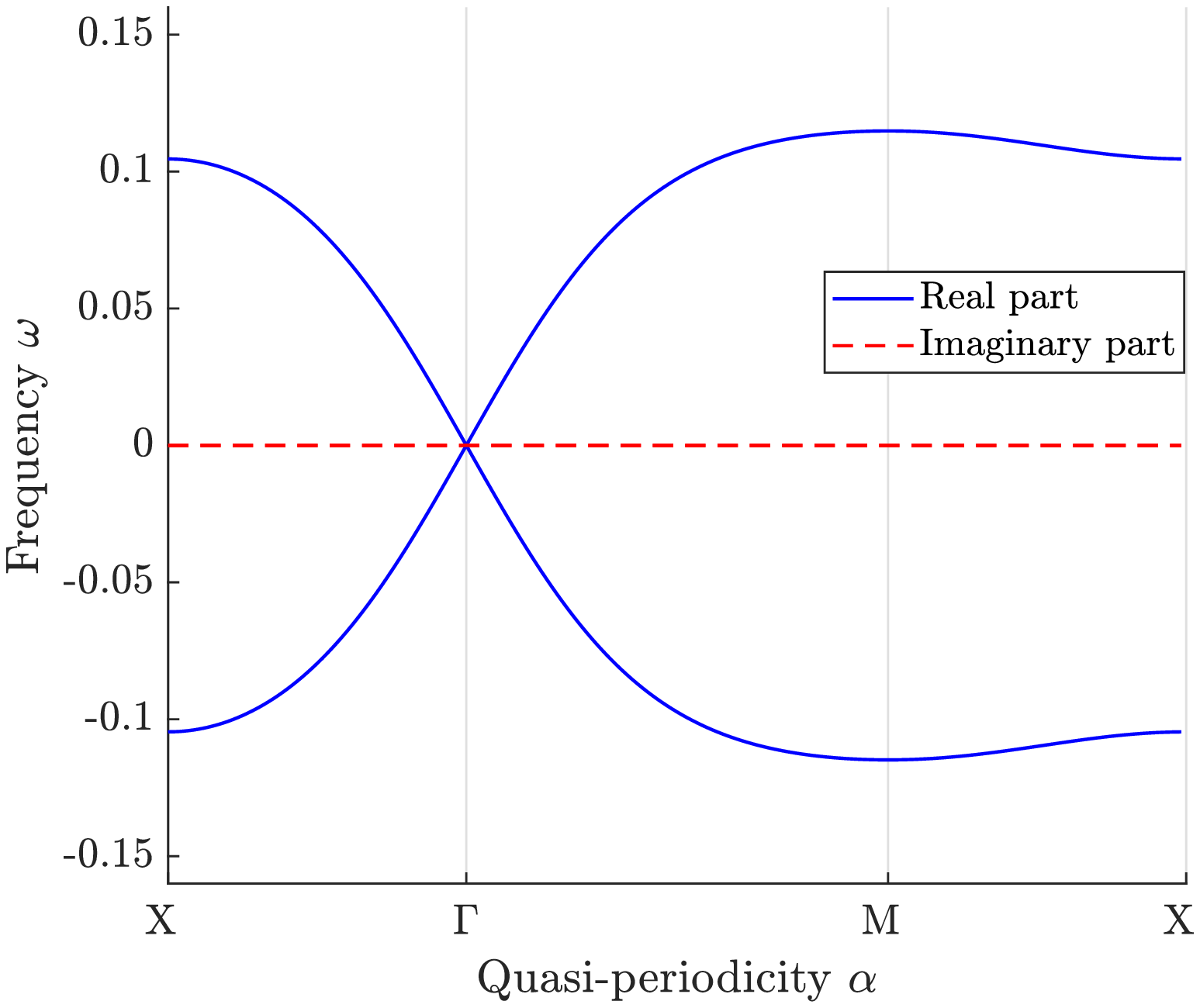}
				\end{center}
				\caption{Subwavelength bands in the unmodulated case.} \label{fig:SLa}
			\end{subfigure}
			\hspace{10pt}
			\begin{subfigure}[b]{0.3\linewidth}
				\begin{center}
					\includegraphics[width=1\linewidth]{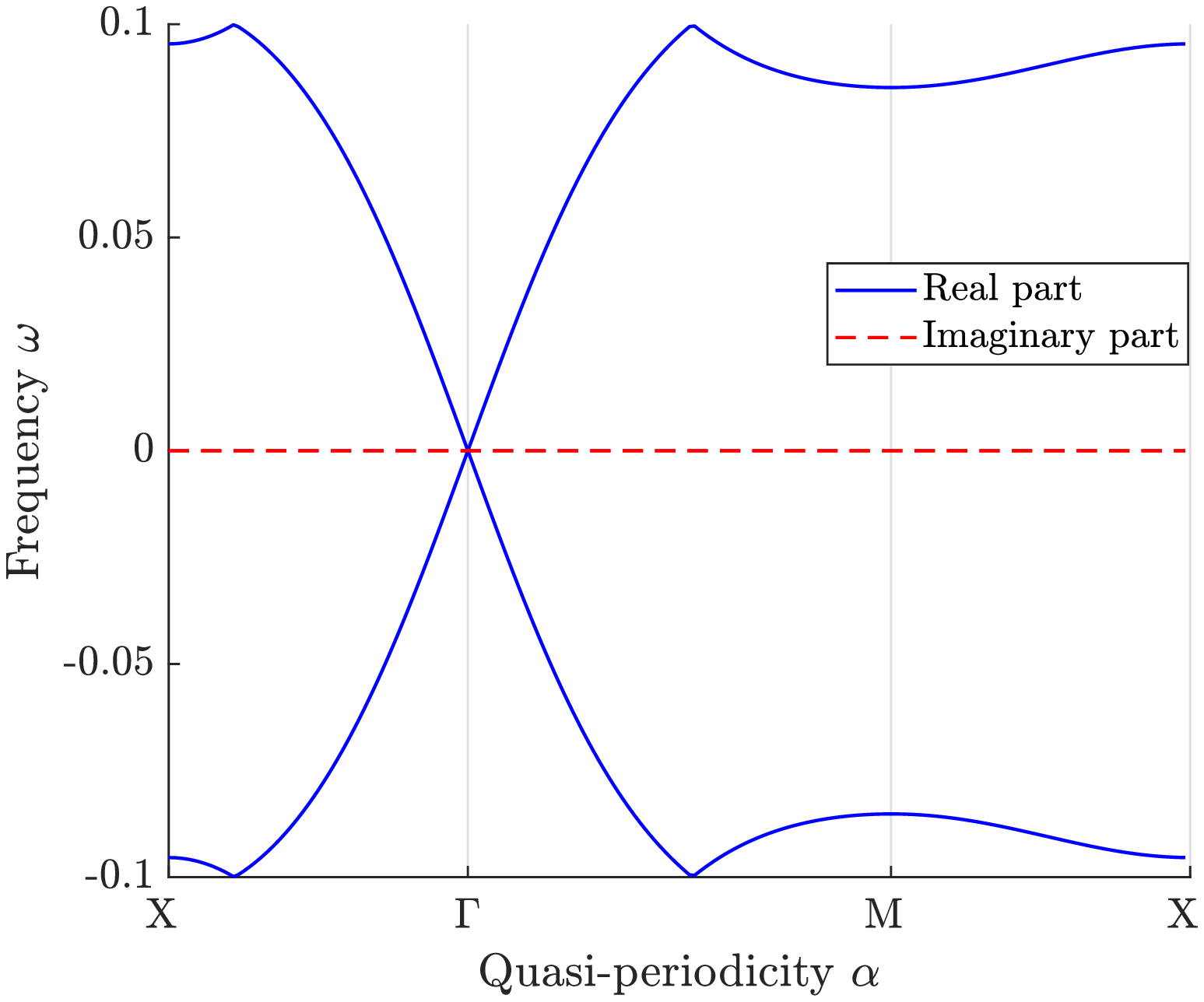}
				\end{center}
				\caption{Same as \Cref{fig:SLa} but folded with $\Omega = 0.2$.}\label{fig:SLb}
			\end{subfigure}
			\hspace{10pt}
			\begin{subfigure}[b]{0.3\linewidth}
				\begin{center}
					\includegraphics[width=1\linewidth]{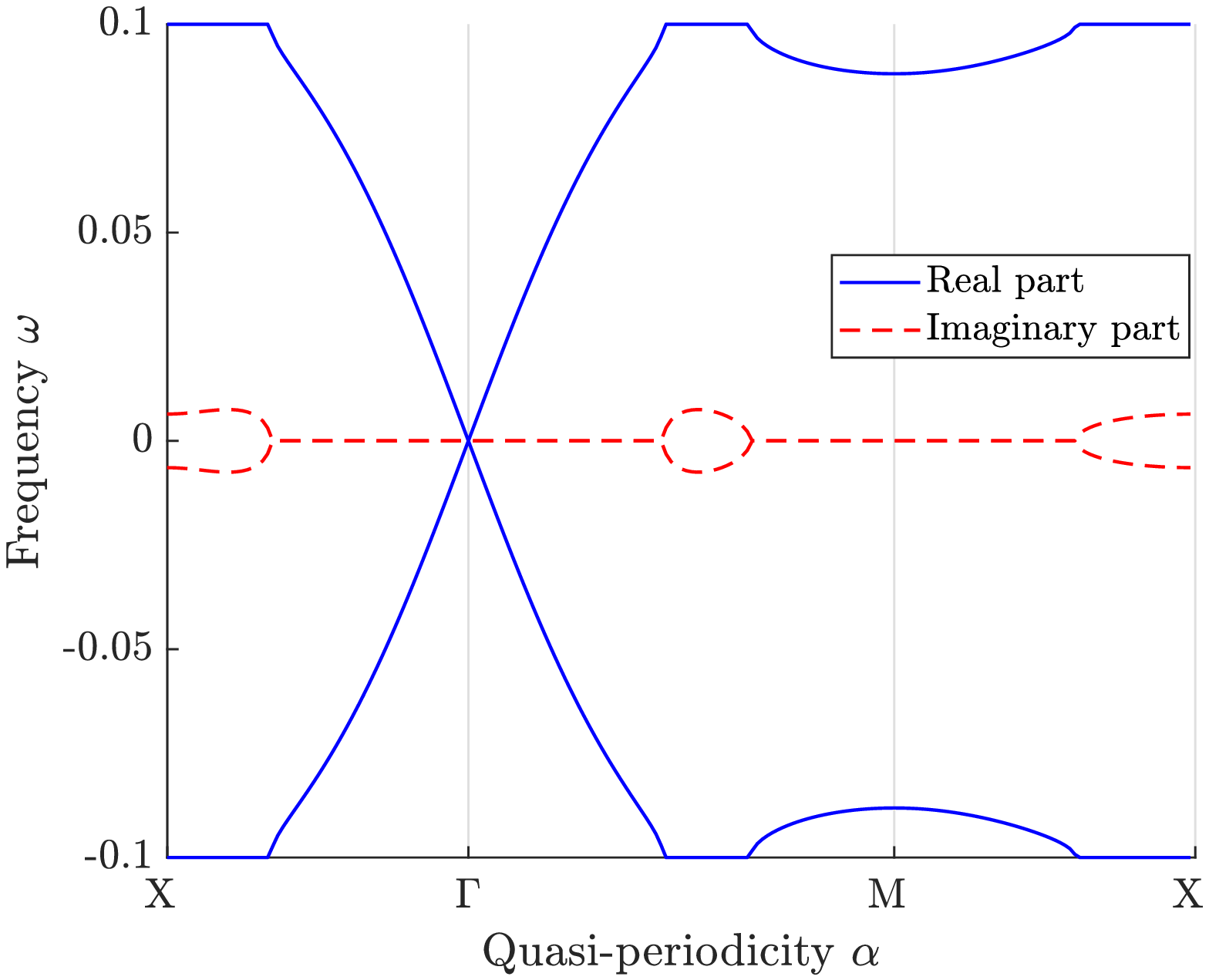}
				\end{center}
				\caption{$\rho$-modulated structure with $\Omega = 0.2$ and $\varepsilon = 0.3$.}\label{fig:SLc}
			\end{subfigure}
			\caption{Band structure of high-contrast resonators in a square lattice. As the modulation increases, $k$-gaps open around the edges of the time-Brillouin zone $[-\Omega/2,\Omega/2)$.}\label{fig:SLuni}
		\end{figure}

\subsubsection{Honeycomb lattice of resonators} \label{sec:HL_uni}
Next, we illustrate the case of a honeycomb lattice of resonators, illustrated in \Cref{fig:honeycomb_uni}. The lattice vectors are given by
\begin{equation}l_1 = \begin{pmatrix}3\\\sqrt{3}\end{pmatrix}, \quad l_2 = \begin{pmatrix}3\\-\sqrt{3}\end{pmatrix}\end{equation}
and the unit cell contains two resonators $D_1$ and $D_2$ centred around $c_1=(1,0)$ and $c_2 = (2,0)$, respectively. The symmetry points in $Y^*$ are given by $\Gamma = (0,0), \ \text{M} = \alpha_1/2$ and $\text{X}=2\alpha_1/3+\alpha_2/3$.

\Cref{fig:HLuni} shows the modulated subwavelength band structure for different values of $\Omega$, when $\varepsilon=0.2$ is fixed. \Cref{fig:HLa} shows the case $\Omega=0.3$. In this case, the static band structure is not folded and the modulated band structure is qualitatively similar to the static case. \Cref{fig:HLb} corresponds to $\Omega=0.23$, where complex frequencies occur. However, some bands remain real and there is no full $k$-gap. \Cref{fig:HLc} shows the band structure for $\Omega=0.2$, where all band functions are complex at $\alpha = \mathrm{K}$ and thus exhibit a $k$-gap.
		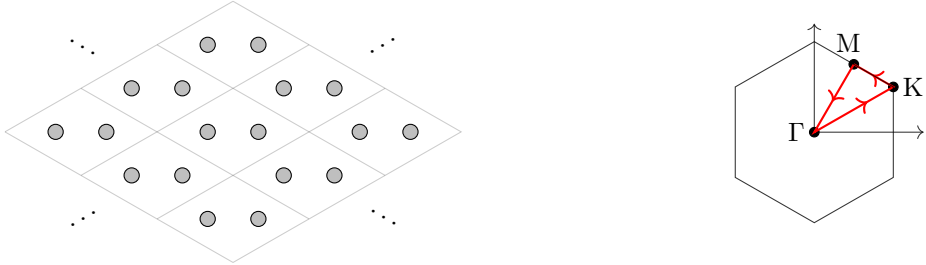
\begin{figure}[p]
	\begin{subfigure}[b]{0.49\linewidth}
		\centering\begin{tikzpicture}[scale=1]
				\pgfmathsetmacro{\r}{0.1pt}
				\coordinate (a) at (1,{1/sqrt(3)});		
				\coordinate (b) at (1,{-1/sqrt(3)});
				
				\draw[opacity=0.2] (0,0) -- (a);
				\draw[opacity=0.2] (0,0) -- (b);
				\draw[fill=lightgray] ({2/3},0) circle(\r);
				\draw[fill=lightgray] ({4/3},0) circle(\r);
				
				\begin{scope}[shift = (a)]					
					\draw[opacity=0.2] (0,0) -- (1,{1/sqrt(3)});
					\draw[opacity=0.2] (0,0) -- (1,{-1/sqrt(3)});
					\draw[opacity=0.2] (1,{1/sqrt(3)}) -- (2,0);
					\draw[fill=lightgray] ({2/3},0) circle(\r);
					\draw[fill=lightgray] ({4/3},0) circle(\r);
				\end{scope}
				\begin{scope}[shift = (b)]
					
					\draw[opacity=0.2] (0,0) -- (1,{1/sqrt(3)});
					\draw[opacity=0.2] (0,0) -- (1,{-1/sqrt(3)});
					\draw[opacity=0.2] (2,0) -- (1,{-1/sqrt(3)});
					\draw[fill=lightgray] ({2/3},0) circle(\r);
					\draw[fill=lightgray] ({4/3},0) circle(\r);
				\end{scope}
				\begin{scope}[shift = ($-1*(a)$)]
					
					\draw[opacity=0.2] (0,0) -- (1,{1/sqrt(3)});
					\draw[opacity=0.2] (0,0) -- (1,{-1/sqrt(3)});
					\draw[fill=lightgray] ({2/3},0) circle(\r);
					\draw[fill=lightgray] ({4/3},0) circle(\r);
				\end{scope}
				\begin{scope}[shift = ($-1*(b)$)]
					
					\draw[opacity=0.2] (0,0) -- (1,{1/sqrt(3)});
					\draw[opacity=0.2] (0,0) -- (1,{-1/sqrt(3)});
					\draw[fill=lightgray] ({2/3},0) circle(\r);
					\draw[fill=lightgray] ({4/3},0) circle(\r);
				\end{scope}
				\begin{scope}[shift = ($(a)+(b)$)]
					
					\draw[opacity=0.2] (0,0) -- (1,{1/sqrt(3)});
					\draw[opacity=0.2] (0,0) -- (1,{-1/sqrt(3)});
					\draw[opacity=0.2] (1,{1/sqrt(3)}) -- (2,0) -- (1,{-1/sqrt(3)});
					\draw[fill=lightgray] ({2/3},0) circle(\r);
					\draw[fill=lightgray] ({4/3},0) circle(\r);
				\end{scope}
				\begin{scope}[shift = ($-1*(a)-(b)$)]
					
					\draw[opacity=0.2] (0,0) -- (1,{1/sqrt(3)});
					\draw[opacity=0.2] (0,0) -- (1,{-1/sqrt(3)});
					\draw[fill=lightgray] ({2/3},0) circle(\r);
					\draw[fill=lightgray] ({4/3},0) circle(\r);
				\end{scope}
				\begin{scope}[shift = ($(a)-(b)$)]
					
					\draw[opacity=0.2] (0,0) -- (1,{1/sqrt(3)});
					\draw[opacity=0.2] (0,0) -- (1,{-1/sqrt(3)});
					\draw[opacity=0.2] (1,{1/sqrt(3)}) -- (2,0);
					\draw[fill=lightgray] ({2/3},0) circle(\r);
					\draw[fill=lightgray] ({4/3},0) circle(\r);
				\end{scope}
				\begin{scope}[shift = ($-1*(a)+(b)$)]					
					\draw[opacity=0.2] (0,0) -- (1,{1/sqrt(3)});
					\draw[opacity=0.2] (0,0) -- (1,{-1/sqrt(3)});
					\draw[opacity=0.2] (2,0) -- (1,{-1/sqrt(3)});
					\draw[fill=lightgray] ({2/3},0) circle(\r);
					\draw[fill=lightgray] ({4/3},0) circle(\r);
				\end{scope}
				\begin{scope}[shift = ($2*(a)$)]
					\draw (1,0) node[rotate=30]{$\cdots$};
				\end{scope}
				\begin{scope}[shift = ($-2*(a)$)]
					\draw (1,0) node[rotate=210]{$\cdots$};
				\end{scope}
				\begin{scope}[shift = ($2*(b)$)]
					\draw (1,0) node[rotate=-30]{$\cdots$};
				\end{scope}
				\begin{scope}[shift = ($-2*(b)$)]
					\draw (1,0) node[rotate=150]{$\cdots$};
				\end{scope}
		\end{tikzpicture}
		\caption{Circular resonators in a hexagonal lattice.}
	\end{subfigure}
	\begin{subfigure}[b]{0.49\linewidth}
		\centering
			\begin{tikzpicture}[scale=1.8]	
				\coordinate (a) at ({1/sqrt(3)},1);		
				\coordinate (b) at ({1/sqrt(3)},-1);
				\coordinate (c) at ({2/sqrt(3)},0);
				\coordinate (M) at ({0.5/sqrt(3)},0.5);
				\coordinate (K1) at ({1/sqrt(3)},{1/3});
				\coordinate (K2) at ({1/sqrt(3)},{-1/3});
				\coordinate (K3) at (0,{-2/3});
				\coordinate (K4) at ({-1/sqrt(3)},{-1/3});
				\coordinate (K5) at ({-1/sqrt(3)},{1/3});
				\coordinate (K6) at (0,{2/3});
				
				\draw[->,opacity=0.8] (0,0) -- (0.8,0);
				\draw[->,opacity=0.8] (0,0) -- (0,0.8);
				\draw[fill] (M) circle(1pt) node[yshift=8pt, xshift=-2pt]{M}; 
				\draw[fill] (0,0) circle(1pt) node[left]{$\Gamma$}; 
				\draw[fill] (K1) circle(1pt) node[right]{K}; 
				
				\draw[thick,postaction={decorate}, decoration={markings, mark=at position 0.2 with {\arrow{>}}, markings, mark=at position 0.65 with {\arrow{>}}, markings, mark=at position 0.9 with {\arrow{>}}}, color=red]
				(M) -- (0,0) -- (K1) -- (M);
				\draw[opacity=0.8] (K1) -- (K2) -- (K3) -- (K4) -- (K5) -- (K6) -- cycle; 
			\end{tikzpicture}
		\vspace{15pt}		
		\caption{Brillouin zone and the symmetry points $\Gamma$, $\mathrm{K}$ and $\mathrm{M}$.}
	\end{subfigure}
	\caption{Illustration of the honeycomb lattice of resonators, and corresponding Brillouin zone. The red path shows the points where the band functions are computed in \Cref{fig:HLuni}.} \label{fig:honeycomb_uni}
\end{figure}

\begin{figure}[p] 
	\begin{subfigure}[b]{0.3\linewidth}
		\begin{center}
			\includegraphics[width=1\linewidth]{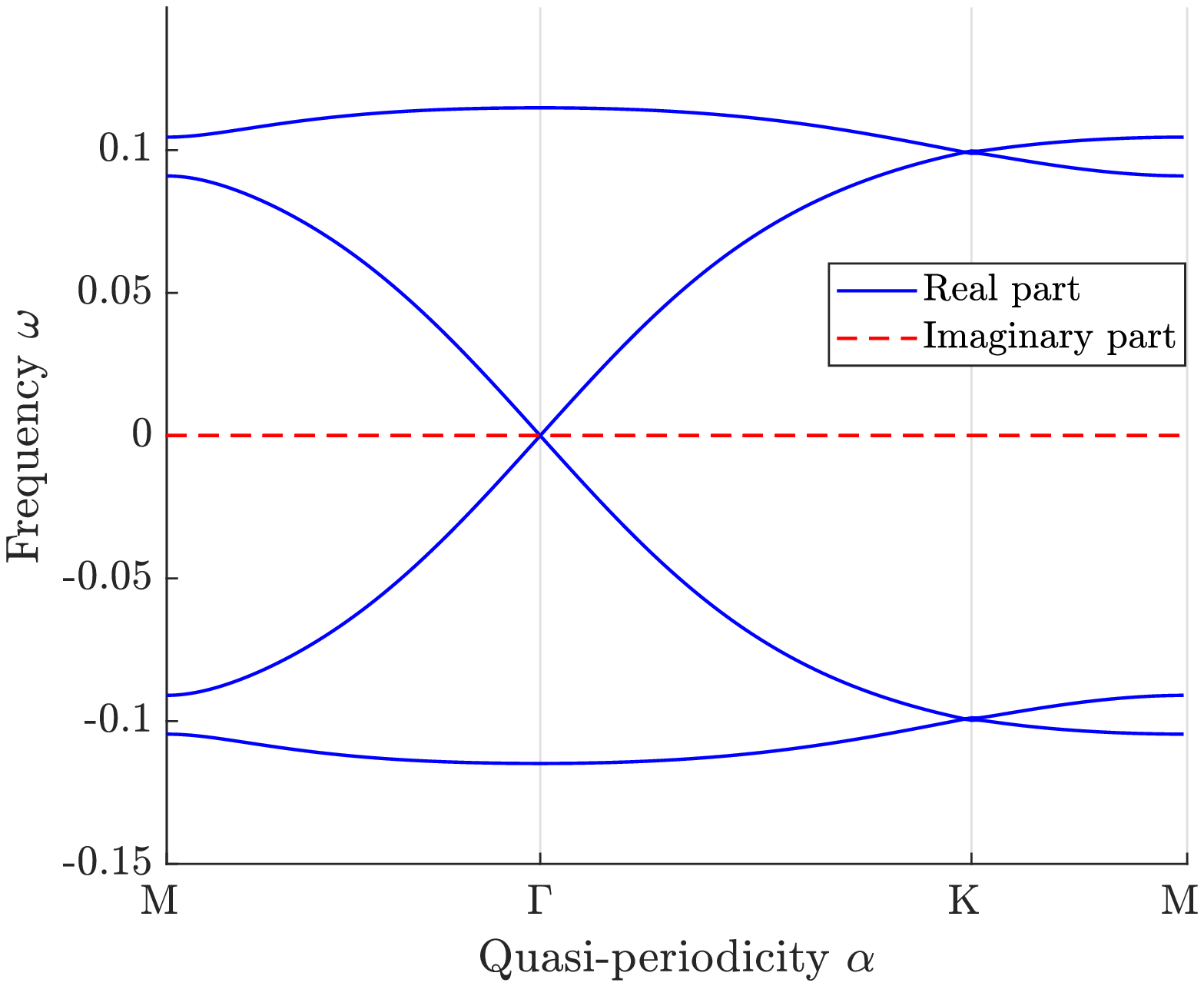}
		\end{center}
		\caption{$\varepsilon = 0.2$ and $\Omega = 0.3$, showing largely unaltered band structure compared to static case.} \label{fig:HLa}
	\end{subfigure}
	\hspace{10pt}
	\begin{subfigure}[b]{0.3\linewidth}
		\begin{center}
			\includegraphics[width=1\linewidth]{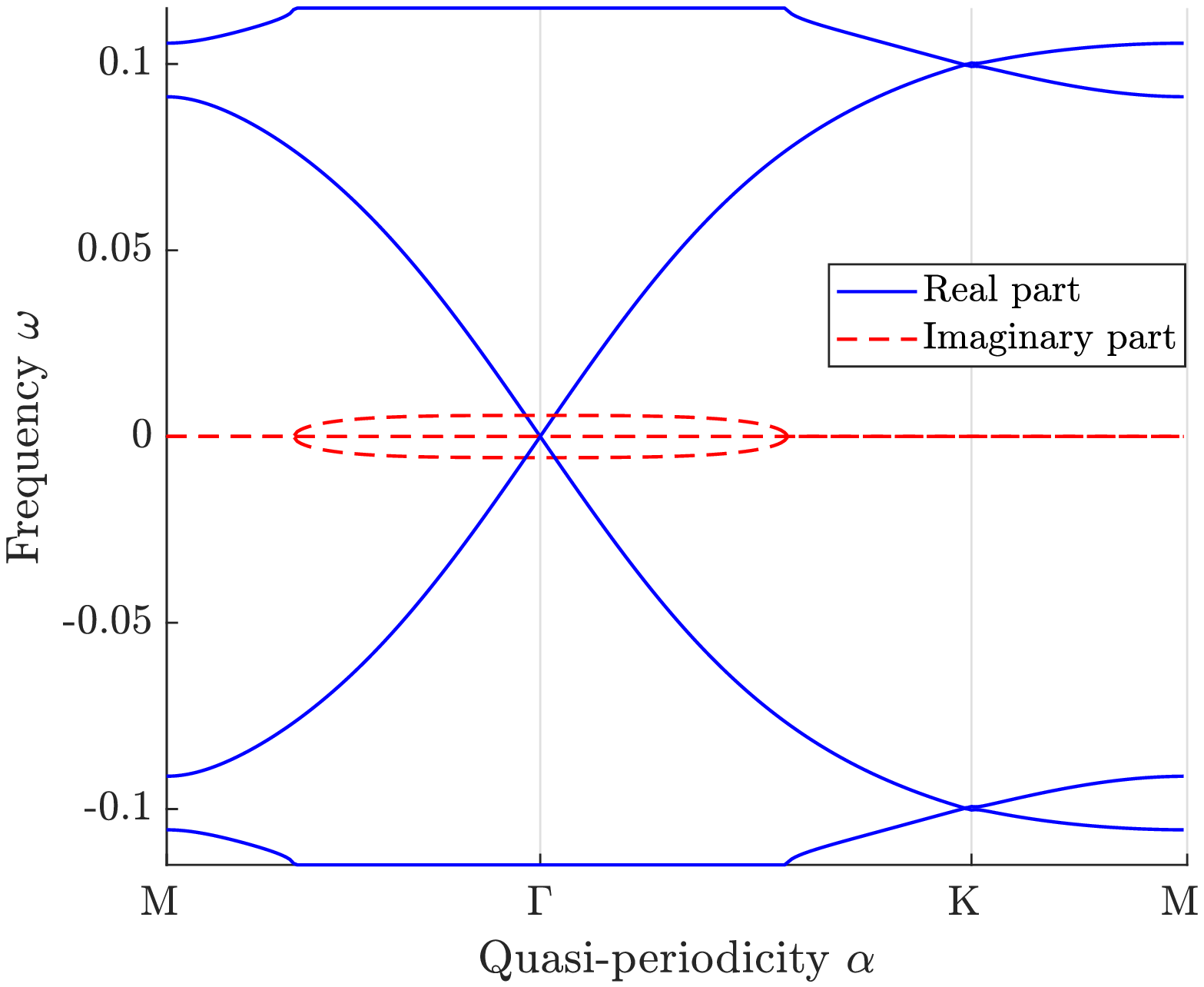}
		\end{center}
		\caption{$\varepsilon = 0.2$ and $\Omega = 0.23$, showing complex band frequencies but no full $k$-gap.}\label{fig:HLb}
	\end{subfigure}
	\hspace{10pt}
	\begin{subfigure}[b]{0.3\linewidth}
		\begin{center}
			\includegraphics[width=1\linewidth]{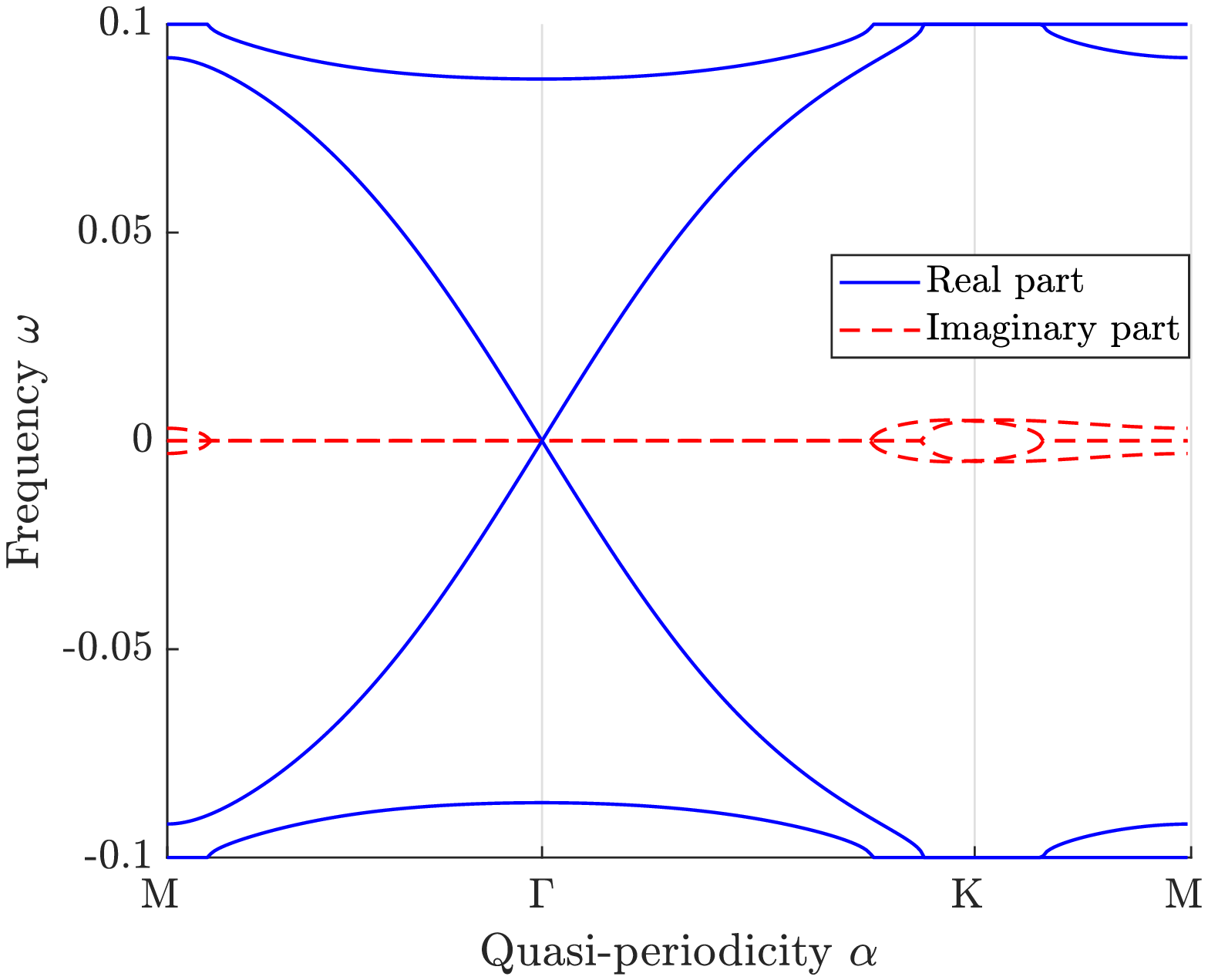}
		\end{center}
		\caption{$\varepsilon = 0.2$ and $\Omega = 0.2$, showing a $k$-gap around the symmetry point $\mathrm{K}$ in the Brillouin zone.}\label{fig:HLc}
	\end{subfigure}
	\caption{Band structure of high-contrast resonators in a honeycomb lattice. For a fixed modulation strength $\varepsilon$, complex band frequencies may appear for certain $\Omega$. A $k$-gap opens when the degenerate static frequencies become complex (\Cref{fig:HLc}).} \label{fig:HLuni}
\end{figure}
				

\section{Resonator-modulated systems} \label{sec:resonatormod}
In this section, we study the case when the time-modulation is only applied to the interior of the resonators, while the surrounding material is constant in $t$. Due to the time-modulation, we will obtain a system of coupled Helmholtz equations in the frequency domain, posed at frequencies which differ by multiples of $\Omega$.

We consider the periodic structure $\C$ as defined in \Cref{sec:formulation} and now assume
\begin{equation} \label{eq:resonatormod}
\kappa(x,t) = \begin{cases}
	 \kappa_0, & x \in \R^d \setminus \overline{\C}, \\  \kappa_\r\kappa_i(t), & x\in \C_i,
\end{cases}, \qquad \rho(x,t) = \begin{cases}
	\rho_0, & x \in \R^d \setminus \overline{\C}, \\  \rho_\r\rho_i(t), & x\in \C_i. \end{cases}
\end{equation}
The functions $\rho_i(t)$ and $\kappa_i(t)$ describe the modulation inside the $i$\textsuperscript{th} resonator $D_i$, and we assume that each $\rho_i, \kappa_i$ is periodic with period $\Omega$. Moreover, we assume that $\kappa_i \in C^1(\R)$ for each $i=1,...,N$. 

In \Cref{sec:cap}, we prove the capacitance matrix approximation of the subwavelength band structure in the limit as $\delta\to 0$. \hl{As in }\Cref{sec:cap_static}, \hl{the capacitance formulation provides a way to integrate the spatial part of the equation. In this time-dependent setting, the resulting equation is an ordinary differential equation posed in $t$.} As we shall see, the dynamics is now governed by a system of coupled Hill differential equations.

\subsection{Capacitance matrix formulation of the problem} \label{sec:cap}
In this section, we will use the above notion of capacitance in order to derive an asymptotic expansion of the subwavelength band functions as $\delta\to0$. Throughout this section we assume that $\alpha \neq 0$.

We seek solutions to \eqref{eq:wave_transf} under the modulation specified in \eqref{eq:resonatormod}. From the regularity of $\rho$ and $\kappa$ we know that $u$ is continuously differentiable in $t$ \cite{lions1988controlabilite}. Since $e^{- \iu \omega t}u(x,t) $ is a $T$-periodic function of $t$ we have a Fourier series expansion as 
\begin{equation}u(x,t)= e^{\iu \omega t}\sum_{n = -\infty}^\infty v_n(x)e^{\iu n\Omega t}.\end{equation}
In the time domain, we have the transmission conditions at $x\in \p D_i$
\begin{equation}\delta \frac{\partial {u}}{\partial \nu} \bigg|_{+} - \frac{1}{\rho_i(t)}\frac{\partial {u}}{\partial \nu} \bigg|_{-} = 0, \qquad x\in \p D_i, \ t\in \R.\end{equation}
In the frequency domain, we then have the following equation, for $n\in \Z$:
\begin{equation} \label{eq:freq}
	\left\{
	\begin{array} {ll}
		\ds \Delta {v_n}+ \frac{\rho_0(\omega+n\Omega)^2}{\kappa_0} {v_n}  = 0 & \text{in } Y \setminus \overline{D}, \\[0.3em]
		\ds \Delta v_{i,n}^* +\frac{\rho_\r(\omega+n\Omega)^2}{\kappa_\r} v_{i,n}^{**}  = 0 & \text{in } D_i, \\
		\nm
		\ds  {v_n}|_{+} -{v_n}|_{-}  = 0  & \text{on } \partial D, \\
		\nm
		\ds  \delta \frac{\partial {v_n}}{\partial \nu} \bigg|_{+} - \frac{\partial v_{i,n}^* }{\partial \nu} \bigg|_{-} = 0 & \text{on } \partial D_i, \\[0.3em]
		v_n(x)e^{\iu \alpha\cdot x} \text{ is $\Lambda$-periodic in $x$}.
	\end{array}
	\right.
\end{equation}
Here, $v_{i,n}^*(x)$ and $v_{i,n}^{**}(x)$ are defined through the convolutions
\begin{equation}v_{i,n}^*(x) = \sum_{m = -\infty}^\infty r_{i,m} v_{n-m}(x), \quad  v_{i,n}^{**}(x) = \frac{1}{\omega+n\Omega}\sum_{m = -\infty}^\infty k_{i,m}\big(\omega+(n-m)\Omega\big)v_{n-m}(x),\end{equation}
where $r_{i,m}$ and $k_{i,m}$ are the Fourier series coefficients of $1/\rho_i$ and $1/\kappa_i$, respectively:
\begin{equation}\frac{1}{\rho_i(t)} = \sum_{n = -\infty}^\infty r_{i,n} e^{\iu n \Omega t}, \quad \frac{1}{\kappa_i(t)} = \sum_{n = -\infty}^\infty k_{i,n} e^{\iu n \Omega t}.\end{equation}
Observe that \eqref{eq:freq} consists of coupled Helmholtz equations at frequencies differing by integer multiples of $\Omega$. The coupling of the Helmholtz equations is specified through $\rho_i$ and $\kappa_i$. Rescaling a solution to \eqref{eq:freq} produces another solution, so we assume that the solution is normalized as $\|v_0\|_{H^1(Y)} = 1$. Since $u$ is continuously differentiable in $t$, we then have as $n\to \infty$,
\begin{equation} \label{eq:reg_v}
	\|v_n\|_{H^1(Y)} = o\left(\frac{1}{n}\right).
\end{equation}  

We will consider the case when the modulation of $\rho$ \hl{and $\kappa$} consist of finite Fourier series with a large number of nonzero Fourier coefficients: 
\begin{equation}\label{eq:finitefourier}
	\frac{1}{\rho_i(t)} = \sum_{n = -M}^M r_{i,n} e^{\iu n \Omega t}, \qquad \mathhl{\frac{1}{\kappa_i(t)} = \sum_{n = -M}^M k_{i,n} e^{\iu n \Omega t}}\end{equation}
for some $M\in \N$ satisfying
\begin{equation}M = O\left(\delta^{-\gamma/2}\right),\end{equation}
for some $0 < \gamma < 1$. We seek subwavelength quasifrequencies $\omega$ of the wave equation \eqref{eq:wave_transf} in the sense of \Cref{def:sub}. In particular, we assume that $\omega$, and also the frequency $\Omega$ of the modulation, is of the same order as the static subwavelength resonant frequencies:
\begin{equation}\omega = O\left(\delta^{1/2}\right), \qquad \Omega = O\left(\delta^{1/2}\right).\end{equation}
We then have the following result.
\begin{lem}\label{lem:const}
	As $\delta \to 0$, the functions $v^*_{i,n}(x)$ are approximately constant for $x$ inside $D_i$, \ie{},
	\begin{align*}
		v^*_{i,n}(x) &= c_{i,n} + O(\delta^{(1-\gamma)/2}), \quad x \in D_i,
	\end{align*}
uniformly for $n\leq M$, for some constants $c_{i,n}$, $i=1,...,N$.
\end{lem}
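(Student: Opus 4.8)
The plan is to regard, for each fixed mode $n$ with $|n|\le M$, the interior equation in \eqref{eq:freq} as an elliptic Neumann problem on the single resonator $D_i$ whose source and boundary data are both forced to be small as $\delta\to0$. Writing $\omega_n:=\omega+n\Omega$ for brevity, the interior equation is $\Delta v_{i,n}^* = -\tfrac{\rho_\r\omega_n^2}{\kappa_\r}v_{i,n}^{**}$ in $D_i$, while the fourth line of \eqref{eq:freq} furnishes the Neumann data $\p v_{i,n}^*/\p\nu|_- = \delta\,\p v_n/\p\nu|_+$ on $\p D_i$. Everything hinges on the scaling remark that, since $\omega,\Omega = O(\delta^{1/2})$ and $M=O(\delta^{-\gamma/2})$,
\begin{equation*}
	|\omega_n| = |\omega + n\Omega| \le |\omega| + M\Omega = O\!\left(\delta^{(1-\gamma)/2}\right),
\end{equation*}
uniformly for $|n|\le M$; thus both the interior wavenumber and (through the prefactor $\delta$) the Neumann data are small.

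First I would record uniform bounds on the convolutions. The normalisation $\|v_0\|_{H^1(Y)}=1$ together with the decay \eqref{eq:reg_v} gives $\sup_j\|v_j\|_{H^1(Y)}=O(1)$, and the $C^1$ regularity of the modulation makes the Fourier coefficients absolutely summable with $\sum_m|r_{i,m}|,\ \sum_m|k_{i,m}| = O(1)$ uniformly in $M$. Hence $\|v_{i,n}^*\|_{L^2(D_i)}\le\sum_m|r_{i,m}|\,\|v_{n-m}\|_{L^2}=O(1)$, so the average $c_{i,n}$ is of order one, and, writing the source as $\omega_n^2 v_{i,n}^{**} = \omega_n\sum_m k_{i,m}\,\omega_{n-m}\,v_{n-m}$,
\begin{equation*}
	\left\|\frac{\rho_\r\omega_n^2}{\kappa_\r}v_{i,n}^{**}\right\|_{L^2(D_i)} \le C\,|\omega_n|\sum_m|k_{i,m}|\,|\omega_{n-m}|\,\|v_{n-m}\|_{L^2} = O\!\left(\delta^{1-\gamma}\right),
\end{equation*}
uniformly in $|n|\le M$. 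For the boundary term, $v_n$ solves the $\alpha$-quasiperiodic Helmholtz equation in $Y\setminus\overline D$ with bounded wavenumber, so the standard trace estimate for the conormal derivative gives $\|\p v_n/\p\nu|_+\|_{H^{-1/2}(\p D_i)}\le C\|v_n\|_{H^1(Y)} = O(1)$, whence the Neumann data is $O(\delta)$.

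Next I would run the energy estimate. Let $c_{i,n}$ be the mean of $v_{i,n}^*$ over $D_i$ and set $w:=v_{i,n}^*-c_{i,n}$, so $\int_{D_i}w=0$; the compatibility condition for the Neumann problem holds automatically by the divergence theorem. Multiplying the interior equation by $\bar w$ and integrating by parts,
\begin{equation*}
	\int_{D_i}|\nabla w|^2 = \int_{\p D_i}\frac{\p v_{i,n}^*}{\p\nu}\,\bar w\dx\sigma - \frac{\rho_\r\omega_n^2}{\kappa_\r}\int_{D_i}v_{i,n}^{**}\,\bar w.
\end{equation*}
Estimating the right-hand side with the bounds above and using the zero-average Poincaré inequality $\|w\|_{L^2(D_i)}\le C\|\nabla w\|_{L^2}$ and the trace inequality $\|w\|_{H^{1/2}(\p D_i)}\le C\|\nabla w\|_{L^2}$ yields $\|\nabla w\|_{L^2(D_i)} = O(\delta^{(1-\gamma)/2})$ and hence $\|w\|_{H^1(D_i)}=O(\delta^{(1-\gamma)/2})$, uniformly in $|n|\le M$. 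This already expresses that $v_{i,n}^*$ is nearly constant in an integral sense; to reach the pointwise statement I would invoke elliptic regularity for the Neumann problem on the $C^{1,s}$ domain $D_i$ — via Schauder estimates, or via the $H^2$ estimate combined with the Sobolev embedding $H^2(D_i)\hookrightarrow L^\infty(D_i)$ valid for $d\le3$ — to upgrade the $H^1$ bound to $\|w\|_{L^\infty(D_i)} = O(\delta^{(1-\gamma)/2})$, which is the assertion with $c_{i,n}$ the claimed constant.

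The principal difficulty is to keep all of these estimates \emph{uniform in $n$} while passing from an integral bound to a pointwise one. The uniform control of $v_{i,n}^{**}$ is the delicate point, because the number of active Fourier modes grows like $M=O(\delta^{-\gamma/2})$: it is exactly the interplay between this growth, the decay \eqref{eq:reg_v}, and the smallness $|\omega_n|=O(\delta^{(1-\gamma)/2})$ that keeps the source $O(\delta^{1-\gamma})$ for every $|n|\le M$ simultaneously. The second delicate point is the regularity upgrade: since $\p D_i$ is only assumed $C^{1,s}$ with $0<s<1$, the passage from the energy bound to the uniform bound should be carried out through Schauder theory (or a single-layer-potential representation of $v_{i,n}^*$ in terms of its $O(\delta)$ Neumann data and $O(\delta^{1-\gamma})$ source), rather than the classical $H^2$ theory that presupposes a $C^{1,1}$ boundary.
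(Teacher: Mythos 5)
Your proposal follows essentially the same route as the paper's proof: the same uniform scaling $|\omega+n\Omega|=O(\delta^{(1-\gamma)/2})$ for $|n|\leq M$, the same $O(1)$ control of the convolutions $v_{i,n}^*$ and $v_{i,n}^{**}$ via \eqref{eq:reg_v}, the same $O(\delta)$ boundary term coming from the transmission condition, the same $O(\delta^{1-\gamma})$ bound on the interior source, and the same integration-by-parts energy estimate yielding $\int_{D_i}|\nabla v_{i,n}^*|^2\dx x = O(\delta^{1-\gamma})$. If anything you are more scrupulous than the paper at the final step: the paper passes directly from this gradient bound to the pointwise statement, whereas you spell out the mean-zero Poincar\'e inequality and the elliptic-regularity upgrade to an $L^\infty$ bound, correctly noting that the $C^{1,s}$ boundary requires Schauder or layer-potential arguments rather than $H^2$ theory.
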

\begin{proof}
	Since $v^*_{i,n}$ \hl{and $v^{**}_{i,n}$ } are defined through finite convolutions, we have from \eqref{eq:reg_v} that
	\begin{equation}
		\|v_{i,n}^*\|_{H^1(Y)} \leq \frac{K^*}{n}, \qquad 	\|v_{i,n}^{**}\|_{H^1(Y)} \leq \frac{K^{**}}{n}, \quad n\neq 0,  \quad
	\end{equation}
	\hl{for some constants $K^*, K^{**}$ satisfying $K^*,K^{**} = O(1)$ as $\delta \to 0$.} We then have from \eqref{eq:freq} that
	\begin{equation}\int_{\p D_i} v_{i,n}^* \frac{\p v_{i,n}^*}{\p \nu} \dx \sigma = O(\delta), \quad \int_{D_i} v_{i,n}^*\Delta v_{i,n}^* \dx x = O(\delta^{1-\gamma}).\end{equation}
	Using integration by parts, we obtain
	\begin{align}
		\int_{D_i} |\nabla v_{i,n}^*|^2 \dx x &= \int_{\p D_i} v_{i,n}^* \frac{\p v_{i,n}^*}{\p \nu} \dx \sigma - \int_{D_i} v_{i,n}^*\Delta v_{i,n}^* \dx x \\
		&= O(\delta^{1-\gamma}).
	\end{align}
	Therefore, for $|n| \leq M$  we have $v_{i,n}^*(x) = c_{i,n} + O(\delta^{(1-\gamma)/2})$ for $x\in D_i$, which proves the claim.
\end{proof}
In $Y\setminus \overline{D}$, the solution $v$ satisfies the Helmholtz equation and can be represented through the single layer potential as
\begin{equation}v_n(x) = \mathcal{S}_D^{\alpha,\omega+n\Omega}[\phi_n](x),\end{equation}
for some $\phi_n$, $n\in \Z$. From \eqref{eq:reg_v} we have that $\|\phi_n\|_{L^2(\p D)} \leq \widetilde K/n$ as $n\to \infty$, for some constant $\widetilde K$. Using \eqref{eq:jump1} and \eqref{eq:Sexp} we have on the boundary $\p D_i$: 
\begin{align}
	v_{i,n}^*(x) &= \sum_{m = -M}^M r_{i,m}\mathcal{S}_D^{\alpha,\omega+(n-m)\Omega}[\phi_{n-m}](x) \\
	&= \mathcal{S}_D^{\alpha,0}\left[\sum_{m = -M}^M r_{i,m}\phi_{n-m}\right](x) + O(\delta^{1-\gamma}),
\end{align}
as $\delta \rightarrow 0$, where the error term is uniform for $|n|\leq M$. From \Cref{lem:const}  we then find that 
\begin{equation} \label{eq:conv+basis}
	\sum_{m = -M}^M r_{i,m}\phi_{n-m} = \sum_{i=1}^N c_{i,n} \psi_i^\alpha + O(\delta^{(1-\gamma)/2}),
\end{equation}
for $\alpha \neq 0$, where $\psi_i^\alpha$ are the basis functions defined in \eqref{eq:psiC}. 

On one hand, using the transmission conditions and integration by parts, we obtain
\begin{equation}\label{eq:first}
\int_{\p D_i} \frac{\p v_n}{\p \nu}\bigg|_+ \dx\sigma = \frac{1}{\delta} \int_{\p D_i} \frac{\p v^*_{i,n}(x)}{\p \nu}\bigg|_- \dx \sigma(x) = -\frac{\rho_\r(\omega+n\Omega)^2}{\delta\kappa_\r}V_{i,n}^{**},
\end{equation}
where $V_{i,n}^{**}$ is defined as 
\begin{equation}V_{i,n}^{**} = \frac{1}{\omega+n\Omega}\sum_{m = -M}^M k_{i,m}\big(\omega+(n-m)\Omega\big)V_{i,n-m}, \quad V_{i,n} = \int_{D_i} v_n(x)\dx x.\end{equation}
On the other hand, using the jump relation \eqref{eq:jump2}, the asymptotic expansion \eqref{eq:expK} and the integration formula \eqref{eq:intK} yield
\begin{equation}
	\int_{\p D_i} \frac{\p v_n}{\p \nu}\bigg|_+ \dx \sigma =  \int_{\p D_i} \left(\frac{1}{2}I + (\mathcal{K}_D^{-\alpha,0})^*\right)[\phi_n] \dx \sigma + O(\delta^{(1-\gamma)/2}) = \int_{\p D_i}\phi_n \dx \sigma + O(\delta^{(1-\gamma)/2}),
\end{equation}
for $|n| \leq M$. Taking the convolution and using \eqref{eq:conv+basis}, we have
\begin{equation}\label{eq:second}
	\sum_{m=-M}^{M}r_{i,m}\int_{\p D_i} \frac{\p v_{n-m}}{\p \nu}\bigg|_+ \dx \sigma = -\sum_{j=1}^N c_{j,n}C_{ij}^\alpha + O(\delta^{(1-\gamma)/2}) ,
\end{equation}
where $C_{ij}^\alpha$ are the capacitance coefficients defined in \eqref{eq:psiC}. Combining \eqref{eq:first} and \eqref{eq:second} we therefore obtain 
\begin{align}
	\sum_{j=1}^N c_{j,n}C_{ij}^\alpha &= \frac{\rho_\r}{\delta\kappa_\r}\sum_{m=-M}^Mr_{i,m}(\omega+(n-m)\Omega)^2V_{i,n-m}^{**} + O(\delta^{(1-\gamma)/2}). \label{eq:Ckr}
\end{align}
Next we will take the inverse transform of \eqref{eq:Ckr}. Denoting 
\begin{equation}c_i(t) = e^{\iu \omega t}\sum_{n = -\infty}^\infty c_{i,n}e^{\iu n\Omega t}, \quad V_i(t) = e^{\iu \omega t}\sum_{n = -\infty}^\infty V_{i,n}e^{\iu n\Omega t},\end{equation}
we have
\begin{equation}c_i(t) = \frac{V_i(t)}{|D_i|\rho_i(t)},\end{equation}
where $|D_i|$ denotes the volume (or area) of the $i$\textsuperscript{th} resonator $D_i$. Assuming that $v$ corresponds to a subwavelength solution, we have as $\delta \to 0$,
\begin{equation}c_i(t) = e^{\iu \omega t}\sum_{n = -M}^M c_{i,n}e^{\iu n\Omega t} + o(1), \quad V_i(t) = e^{\iu \omega t}\sum_{n = -M}^M V_{i,n}e^{\iu n\Omega t} + o(1).\end{equation}
This together with \eqref{eq:Ckr} proves the following main result.
\begin{thm}\label{thm:main}
	Assume that the material parameters are given by \eqref{eq:resonatormod} and that $\alpha \neq 0$. Then, as $\delta \to 0$, the quasifrequencies $\omega = \omega(\alpha) \in Y^*_t$ to the wave equation \eqref{eq:wave} in the subwavelength regime are, to leading order, given by the quasifrequencies of the system of ordinary differential equations
	\begin{equation}\label{eq:C_ODE}
			\sum_{j=1}^N C_{ij}^\alpha c_j(t) = -\frac{|D_i|\rho_\r}{\delta\kappa_\r}\frac{1}{\rho_i(t)}\frac{\dx}{\dx t}\left(\frac{1}{\kappa_i(t)}\frac{\dx \rho_ic_i}{\dx t}\right),
	\end{equation}
	for $i=1,...,N$.
\end{thm}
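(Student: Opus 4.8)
The plan is to read off \eqref{eq:C_ODE} by taking the inverse Floquet transform in $t$ of the Fourier-side identity \eqref{eq:Ckr}, into which all of the spatial analysis (the single-layer representation, \Cref{lem:const}, and the capacitance reduction) has already been folded through the coefficients $C_{ij}^\alpha$. The whole computation rests on a short dictionary relating the three operations appearing on the Fourier side to elementary time-domain operations on $\omega$-quasiperiodic functions. Writing a quasiperiodic function as $f(t)=e^{\iu\omega t}\sum_n f_n e^{\iu n\Omega t}$, one has $\tfrac{1}{\iu}f'(t)=e^{\iu\omega t}\sum_n(\omega+n\Omega)f_n e^{\iu n\Omega t}$, so multiplying the $n$th coefficient by $(\omega+n\Omega)$ corresponds to applying $-\iu\,\frac{\dx}{\dx t}$; convolution against $(k_{i,m})_m$ corresponds to multiplication by $1/\kappa_i(t)$, and convolution against $(r_{i,m})_m$ to multiplication by $1/\rho_i(t)$.

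First I would unwind the nested structure on the right-hand side of \eqref{eq:Ckr}. Setting $W_{i,n}=(\omega+n\Omega)V_{i,n}$, the definition of $V_{i,n}^{**}$ reads $(\omega+n\Omega)V_{i,n}^{**}=\sum_m k_{i,m}W_{i,n-m}$; since $(W_{i,n})_n$ is the Fourier data of $-\iu\,\frac{\dx V_i}{\dx t}$, the function with coefficients $(\omega+n\Omega)V_{i,n}^{**}$ equals $-\iu\,\frac{1}{\kappa_i(t)}\frac{\dx V_i}{\dx t}$. The key bookkeeping point is that the outer factor $(\omega+n\Omega)^2$ in \eqref{eq:Ckr} supplies one power of $(\omega+n\Omega)$ that cancels the prefactor $1/(\omega+n\Omega)$ hidden inside $V^{**}$; applying $-\iu\,\frac{\dx}{\dx t}$ once more then shows that the function with coefficients $(\omega+n\Omega)^2 V_{i,n}^{**}$ is $-\frac{\dx}{\dx t}\big(\frac{1}{\kappa_i(t)}\frac{\dx V_i}{\dx t}\big)$. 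The remaining convolution against $(r_{i,m})_m$ multiplies by $1/\rho_i(t)$, so the inverse transform of \eqref{eq:Ckr} gives
\begin{equation}
	\sum_{j=1}^N C_{ij}^\alpha c_j(t) = -\frac{\rho_\r}{\delta\kappa_\r}\frac{1}{\rho_i(t)}\frac{\dx}{\dx t}\left(\frac{1}{\kappa_i(t)}\frac{\dx V_i}{\dx t}\right) + o(1).
\end{equation}
Substituting the relation $V_i(t)=|D_i|\rho_i(t)c_i(t)$ recorded just before the theorem then produces exactly \eqref{eq:C_ODE}.

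The hard part will be justifying that this formal inverse transform is legitimate to leading order; the algebra above is purely formal and all the work is in the error control. Two points need care. First, \eqref{eq:Ckr} carries its uniform $O(\delta^{(1-\gamma)/2})$ remainder only for $|n|\le M$, so before summing the Fourier series I would invoke \Cref{def:sub} together with the decay \eqref{eq:reg_v} to discard the tails $|n|>M$ at cost $o(1)$, and check that the products of the finite modulations in \eqref{eq:finitefourier} with the truncated coefficient sequences faithfully reproduce the genuine time-domain products $\frac{1}{\kappa_i(t)}(\cdots)$ and $\frac{1}{\rho_i(t)}(\cdots)$ on the band of modes carrying the subwavelength mass. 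Second, to upgrade the one-sided derivation (every subwavelength Bloch quasifrequency solves \eqref{eq:C_ODE} to leading order) into the stated correspondence of quasifrequencies, I would note that \eqref{eq:C_ODE} is a system of $N$ second-order linear ODEs with $T$-periodic coefficients and hence has exactly $2N$ characteristic exponents modulo $\Omega$; since this count matches the $2N$ subwavelength static bands of \Cref{thm:static}, continuity of the bands in $\delta$ turns the inclusion into an equality to leading order.
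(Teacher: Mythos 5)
Your proposal is correct and takes essentially the same route as the paper: the paper's proof of \Cref{thm:main} consists precisely of the derivation culminating in \eqref{eq:Ckr}, followed by the inverse Floquet transform via the relation $c_i(t) = V_i(t)/\left(|D_i|\rho_i(t)\right)$ and the truncation to $|n|\leq M$ justified by \Cref{def:sub} and \eqref{eq:reg_v} --- exactly the dictionary computation you spell out, with the correct sign bookkeeping for the factor $(\omega+n\Omega)^2$ against the $1/(\omega+n\Omega)$ inside $V_{i,n}^{**}$. Your final remark, matching the $2N$ characteristic exponents of \eqref{eq:C_ODE} to the $2N$ subwavelength static bands to upgrade the inclusion to a correspondence, is a completeness point the paper leaves implicit, but it does not alter the method.
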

We remark that the left-hand side of \eqref{eq:C_ODE} is specified by the entries of the matrix-vector product $C^\alpha c(t)$. In fact, we can rewrite \eqref{eq:C_ODE} into the following system of Hill equations
\begin{equation}	\label{eq:hill}
\Psi''(t)+ M(t)\Psi(t)=0,
\end{equation}
where $\Psi$ is the vector defined as 
\begin{equation}\Psi = \left(\frac{\rho_i(t)}{\sqrt{\kappa_i(t)}}c_i(t)\right)_{i=1}^N\end{equation}
and $M$ is the matrix defined as 
\begin{equation}\label{eq:M}
	M(t) = \frac{\delta \kappa_\r}{\rho_\r}W_1(t)C^\alpha W_2(t) + W_3(t),\end{equation}
with $W_1, W_2$ and $W_3$ being the diagonal matrices with diagonal entries
\begin{equation}\left(W_1\right)_{ii} = \frac{\sqrt{\kappa_i}\rho_i}{|D_i|}, \qquad \left(W_2\right)_{ii} =\frac{\sqrt{\kappa_i}}{\rho_i}, \qquad \left(W_3\right)_{ii} = \frac{\sqrt{\kappa_i}}{2}\frac{\dx }{\dx t}\frac{\kappa_i'}{\kappa_i^{3/2}},\end{equation}
for $i=1,...,N$.

\begin{rmk}
If we assume that all resonators have equal volume and that
\begin{equation}\kappa_i(t)=1, \quad \text{and} \quad \rho_i(t)=\rho_1(t), \ t \in \R, i=1,..,N,\end{equation} 
(in other words that $\kappa$ is unmodulated and all $\rho_i$ coincide), equation \eqref{eq:hill} and the corresponding quasifrequencies $\omega^\alpha_i$ read
\begin{equation}
\Psi''(t) + \frac{\delta\kappa_\r}{|D_1|\rho_\r}C^\alpha \Psi = 0, \qquad \omega_i^\alpha = \sqrt{\frac{\delta\lambda_i^\alpha}{|D_1|}}v_\r + o(1),
\end{equation}
where $\lambda_i^\alpha$ are the eigenvalues of $C^\alpha$. This agrees with the formula for the static subwavelength resonant frequencies given in \Cref{thm:static}, and we remark that this holds even if the system has a nontrivial modulation of  $\rho$ specified by $\rho_1$.
\end{rmk}
\begin{rmk}
	\hl{The above analysis focuses on the case when $\rho$ and $\kappa$ are smooth in $t$ (due to the assumption in }\eqref{eq:finitefourier}). \hl{In the case of parameters which are discontinuous in $t$, the waves will be subject to time-reflection and we expect the physical phenomena, and the mathematical analysis required to study them, to be quite different.}
\end{rmk}

\begin{rmk}
	\hl{Contrary to the case mentioned in} \Cref{rmk:impedance}, \hl{$k$-gaps may open in the resonator-modulated setting even if the impedance is constant in $t$. For the simplest example, consider $N=1$, where the capacitance matrix is just a single number $C_{11}^\alpha =: \mathrm{Cap}_{D,\alpha}$. Choosing $\kappa_1(t) = 1/\rho_1(t) = 1 + \varepsilon\cos(\Omega t)$ renders} \eqref{eq:hill} \hl{fully analogous to the Mathieau equation studied in} \Cref{sec:rhomod} \hl{and illustrated in} \Cref{fig:SLuni}.
\end{rmk}
\subsection{Numerical computations}
\hl{With }\Cref{thm:main} \hl{as starting point, we can now compute the subwavelength band structure very efficiently. Conceptually, we have reduced the four-dimensional partial differential equation} \eqref{eq:wave_transf} \hl{into the ordinary differential equation }\eqref{eq:hill}, \hl{allowing numerical integration using simple techniques.}

We will numerically compute the Floquet exponents of the Hill system of equations  \cite{denk1995floquet,carlson2000eigenvalue} 
\begin{equation} \label{eq:hillagain}
\Psi''(t) + M(t)\Psi(t) = 0.
\end{equation}
This is an $N\times N$ system of ordinary differential equations of second order, which admits a fundamental basis of solutions $\{\psi_{\I,j}(t),\psi_{\II,j}(t)\}_{j=1}^N$ defined through the initial conditions
\begin{equation} \label{eq:ivp}
\psi_{\I,j}^{(i)}(0)  = \delta_{ij}, \quad \left(\psi_{\I,j}^{(i)}\right)'(0) = 0, \qquad \psi_{\II,j}^{(i)}(0)  = 0, \quad \left(\psi_{\II,j}^{(i)}\right)'(0) = \delta_{ij}.
\end{equation}
Here, and throughout this section, bracketed superscripts denote corresponding vector components. We seek quasiperiodic solutions $\psi$ satisfying 
\begin{equation}\psi(t+T) = e^{\iu \omega T} \psi(t).\end{equation}
It is easy to show that this occurs precisely when $e^{\iu \omega T}$ is an eigenvalue of the fundamental solution at $t=T$, which is the $2N\times 2N$-matrix 
\begin{equation}\W = \begin{pmatrix}
	\left(\psi_{\I,j}^{(i)}(T)\right)_{i,j=1}^N & \left(\psi_{\II,j}^{(i)}(T)\right)_{i,j=1}^N \\[0.8em]
	\left(\left(\psi_{\I,j}^{(i)}\right)'(T)\right)_{i,j=1}^N  & 
	\left(\left(\psi_{\II,j}^{(i)}\right)'(T)\right)_{i,j=1}^N 
\end{pmatrix}.\end{equation}
This offers a straightforward numerical algorithm to compute the Floquet exponents $\omega$: we numerically integrate \eqref{eq:hillagain} with initial conditions \eqref{eq:ivp}, and then approximate $\omega$ through the eigenvalues of $\W$.

\hl{As in} \Cref{sec:num_unimod}, \hl{we use the multipole discretization method to compute the capacitance coefficients} \cite[Appendix C]{ammari2017subwavelength}. \hl{Then, for given choices of $\kappa_i$ and $\rho_i$, we can integrate} \eqref{eq:hillagain} \hl{with $M$ given as in} \eqref{eq:M}. \hl{In these simple, proof-of-concept computations, we use standard} \verb+MATLAB+ \hl{routines for the numerical integration.}

\subsubsection{Exceptional point degeneracy in square lattice of dimers}
We begin by considering the band structure of a structure with the same square lattice as in \Cref{sec:SL_uni}, but where the unit cell now contains two resonators $D_1, D_2$ centred at $c_1 = (0.5-1.2R,0.5)$, $c_2=(0.5+1.2R,0.5)$, respectively. The geometry is illustrated in \Cref{fig:square}. We consider the modulation specified by
\begin{equation}\rho_1(t) = 1, \quad \rho_2(t) = 1, \quad \kappa_1(t) = \frac{1}{1+\varepsilon\cos(\Omega t)}, \quad \kappa_2(t) = \frac{1}{1+\varepsilon\cos(\Omega t+\pi)}, \qquad t\in \R,\end{equation}
for $0\leq \varepsilon < 1$. As we will see, this structure may support exceptional points, which (as mentioned in the introduction) are parameter points where the eigenmodes of the system coalesce. 

The band structure of the material is given in \Cref{fig:SLres}. \Cref{fig:SLresa} shows the static band structure (corresponding to $\varepsilon = 0$) folded with $\Omega = 0.26$. This frequency $\Omega$ lies inside a band gap, and there are intersections between the first (unfolded) and the second (folded) bands. \Cref{fig:SLresb} shows the modulated band structure at $\varepsilon = 0.2$, also with $\Omega = 0.26$. In the modulated structure, the intersection points mark transitions from a real to a conjugate-symmetric spectrum. \hl{An exceptional point is a point where $\W$ is deficient: it does not have a basis of eigenvectors, \textit{i.e.} the matrix of eigenvectors is singular. Numerically, we measure this through the condition number, and }\Cref{fig:SLresc} \hl{demonstrates this singularity at the degeneracies found in }\Cref{fig:SLresb}. \hl{Therefore, we conclude that the degeneracies correspond to exceptional points.}

\begin{figure}[tbh]
	\begin{subfigure}[b]{0.48\linewidth}
		\centering
		\begin{tikzpicture}[scale=1.3]
			\pgfmathsetmacro{\r}{0.06pt}
			\pgfmathsetmacro{\rt}{0.12pt}
			\coordinate (a) at (1,0);		
			\coordinate (b) at (0,1);	
			\coordinate (c) at (1,1);
			\coordinate (x1) at ({2/3},0);
			\coordinate (x2) at ({4/3},0);
			\draw[->] (0,0) -- (a) node[xshift=1,yshift=-7]{ $l_1$};
			\draw[->] (0,0) -- (b) node[xshift=-7,yshift=0]{ $l_2$};
			\draw[opacity=0.5] (a) -- (c) -- (b);
			\draw (1,1) node[below right]{$Y$};
			
			\draw[fill=lightgray] (0.5,0.5){} +(0:\rt) circle(\r);
			\draw[fill=lightgray] (0.5,0.5){} +(180:\rt) circle(\r);
		\end{tikzpicture}
		\vspace{0.65cm}
		\caption{Square unit cell $Y$ containing $2$ resonators.}
	\end{subfigure}
	\begin{subfigure}[b]{0.48\linewidth}
		\centering
		\begin{tikzpicture}[scale=0.8]
			\begin{scope}[xshift=-5cm,scale=1]
				\pgfmathsetmacro{\r}{0.06pt}
				\pgfmathsetmacro{\rt}{0.12pt}
				\coordinate (a) at (1,0);		
				\coordinate (b) at (0,1);
				
				\draw[opacity=0.2] (0,0) -- (1,0);
				\draw[opacity=0.2] (0,0) -- (0,1);
				\draw[fill=lightgray] (0.5,0.5){} +(0:\rt) circle(\r);
				\draw[fill=lightgray] (0.5,0.5){} +(180:\rt) circle(\r);
				
				\begin{scope}[shift = (a)]					
					\draw[opacity=0.2] (1,0) -- (1,1);
					\draw[opacity=0.2] (0,0) -- (1,0);
					\draw[opacity=0.2] (0,0) -- (0,1);
					\draw[fill=lightgray] (0.5,0.5){} +(0:\rt) circle(\r);
					\draw[fill=lightgray] (0.5,0.5){} +(180:\rt) circle(\r);
				\end{scope}
				\begin{scope}[shift = (b)]
					\draw[opacity=0.2] (1,1) -- (0,1);
					\draw[opacity=0.2] (0,0) -- (1,0);
					\draw[opacity=0.2] (0,0) -- (0,1);
					\draw[fill=lightgray] (0.5,0.5){} +(0:\rt) circle(\r);
					\draw[fill=lightgray] (0.5,0.5){} +(180:\rt) circle(\r);
				\end{scope}
				\begin{scope}[shift = ($-1*(a)$)]
					\draw[opacity=0.2] (0,0) -- (1,0);
					\draw[opacity=0.2] (0,0) -- (0,1);
					\draw[fill=lightgray] (0.5,0.5){} +(0:\rt) circle(\r);
					\draw[fill=lightgray] (0.5,0.5){} +(180:\rt) circle(\r);
				\end{scope}
				\begin{scope}[shift = ($-1*(b)$)]
					\draw[opacity=0.2] (0,0) -- (1,0);
					\draw[opacity=0.2] (0,0) -- (0,1);
					\draw[fill=lightgray] (0.5,0.5){} +(0:\rt) circle(\r);
					\draw[fill=lightgray] (0.5,0.5){} +(180:\rt) circle(\r);
				\end{scope}
				\begin{scope}[shift = ($(a)+(b)$)]
					\draw[opacity=0.2] (1,0) -- (1,1) -- (0,1);
					\draw[opacity=0.2] (0,0) -- (1,0);
					\draw[opacity=0.2] (0,0) -- (0,1);
					\draw[fill=lightgray] (0.5,0.5){} +(0:\rt) circle(\r);
					\draw[fill=lightgray] (0.5,0.5){} +(180:\rt) circle(\r);
				\end{scope}
				\begin{scope}[shift = ($-1*(a)-(b)$)]
					\draw[opacity=0.2] (0,0) -- (1,0);
					\draw[opacity=0.2] (0,0) -- (0,1);
					\draw[fill=lightgray] (0.5,0.5){} +(0:\rt) circle(\r);
					\draw[fill=lightgray] (0.5,0.5){} +(180:\rt) circle(\r);
				\end{scope}
				\begin{scope}[shift = ($(a)-(b)$)]
					\draw[opacity=0.2] (1,0) -- (1,1);
					\draw[opacity=0.2] (0,0) -- (1,0);
					\draw[opacity=0.2] (0,0) -- (0,1);
					\draw[fill=lightgray] (0.5,0.5){} +(0:\rt) circle(\r);
					\draw[fill=lightgray] (0.5,0.5){} +(180:\rt) circle(\r);
				\end{scope}
				\begin{scope}[shift = ($-1*(a)+(b)$)]
					\draw[opacity=0.2] (1,1) -- (0,1);
					\draw[opacity=0.2] (0,0) -- (1,0);
					\draw[opacity=0.2] (0,0) -- (0,1);
					\draw[fill=lightgray] (0.5,0.5){} +(0:\rt) circle(\r);
					\draw[fill=lightgray] (0.5,0.5){} +(180:\rt) circle(\r);
				\end{scope}
				\begin{scope}[shift = ($2*(a)$)]
					\draw (0.5,0.5) node[rotate=0]{$\cdots$};
				\end{scope}
				\begin{scope}[shift = ($-2*(a)$)]
					\draw (0.5,0.5) node[rotate=0]{$\cdots$};
				\end{scope}
				\begin{scope}[shift = ($2*(b)$)]
					\draw (0.5,0.3) node[rotate=90]{$\cdots$};
				\end{scope}
				\begin{scope}[shift = ($-2*(b)$)]
					\draw (0.5,0.7) node[rotate=90]{$\cdots$};
				\end{scope}
			\end{scope}
		\end{tikzpicture}
		\caption{Infinite, periodic system with dimers in a square lattice.}
	\end{subfigure}
	\caption{Illustration of the square lattice of dimers, which may support an exceptional point.} \label{fig:square}
\end{figure}
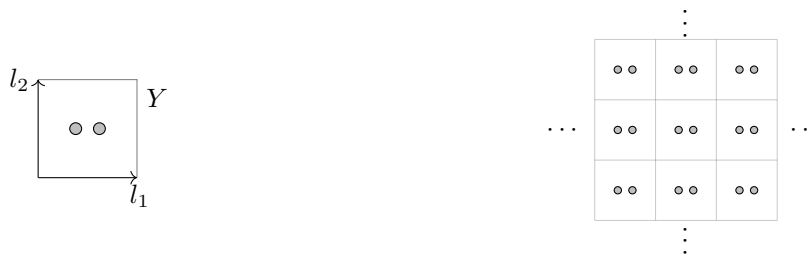

\begin{figure}[h] 
	\begin{subfigure}[b]{0.3\linewidth}
		\begin{center}
			\includegraphics[width=1\linewidth]{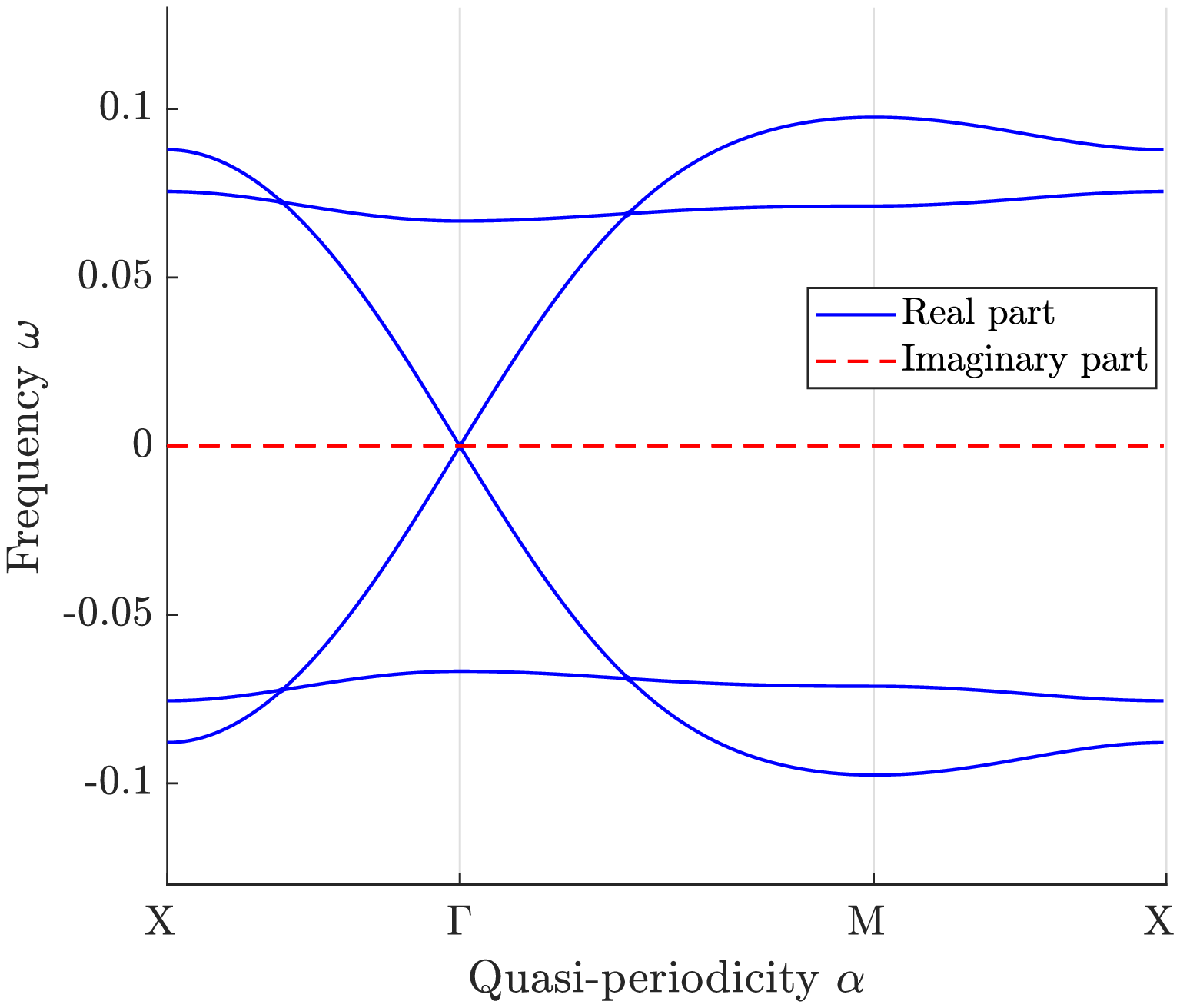}
		\end{center}
		\caption{Static band structure of the square lattice of dimers, folded with $\Omega = 0.26$.} \label{fig:SLresa}
	\end{subfigure}
	\hspace{10pt}
	\begin{subfigure}[b]{0.3\linewidth}
		\begin{center}
			\includegraphics[width=1\linewidth]{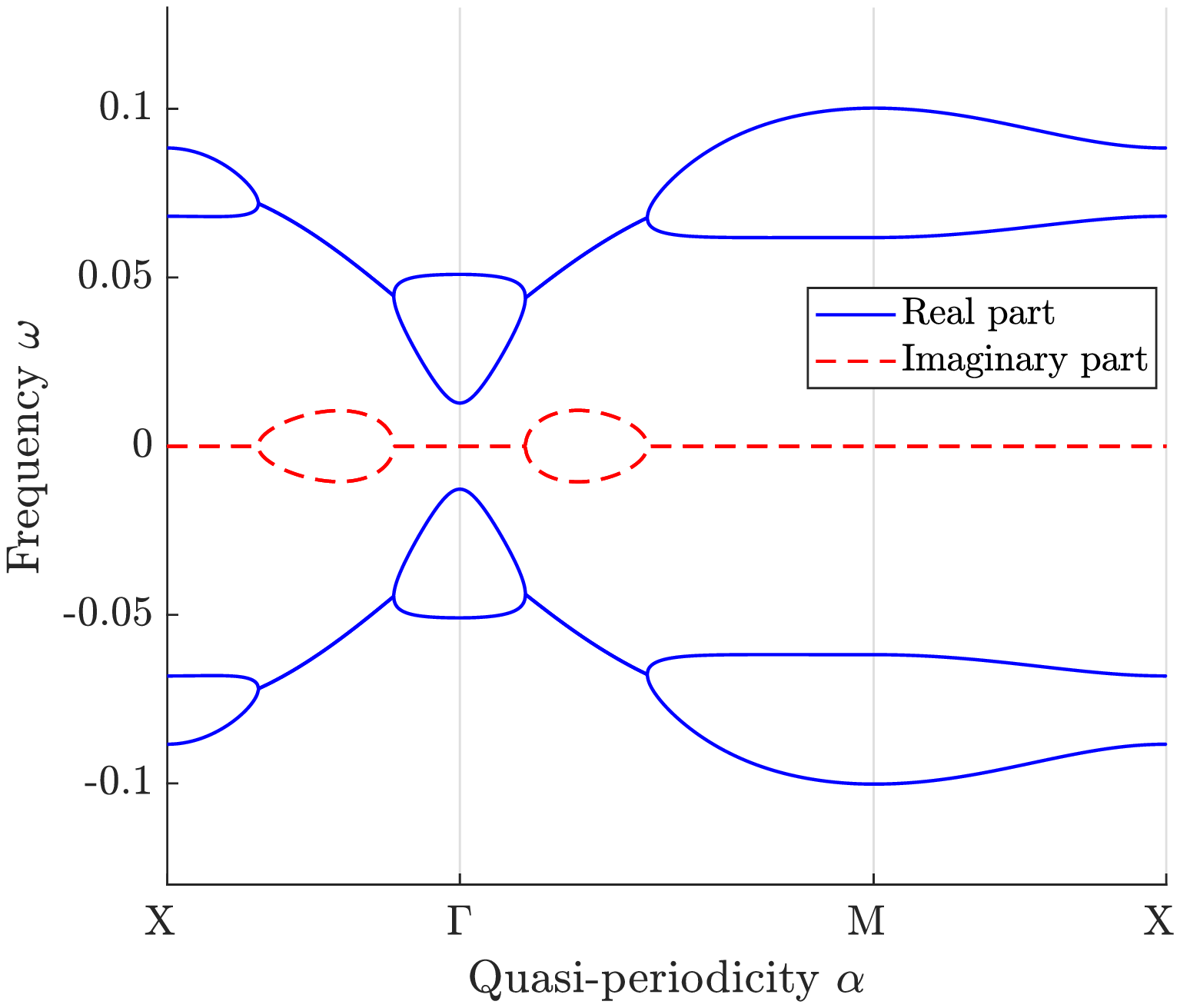}
		\end{center}
		\caption{$\varepsilon = 0.2$ and $\Omega = 0.26$, showing exceptional point degeneracies at some points in the Brillouin zone.}\label{fig:SLresb}
	\end{subfigure}
	\hspace{10pt}
	\begin{subfigure}[b]{0.3\linewidth}
		\begin{center}
			\includegraphics[width=1\linewidth]{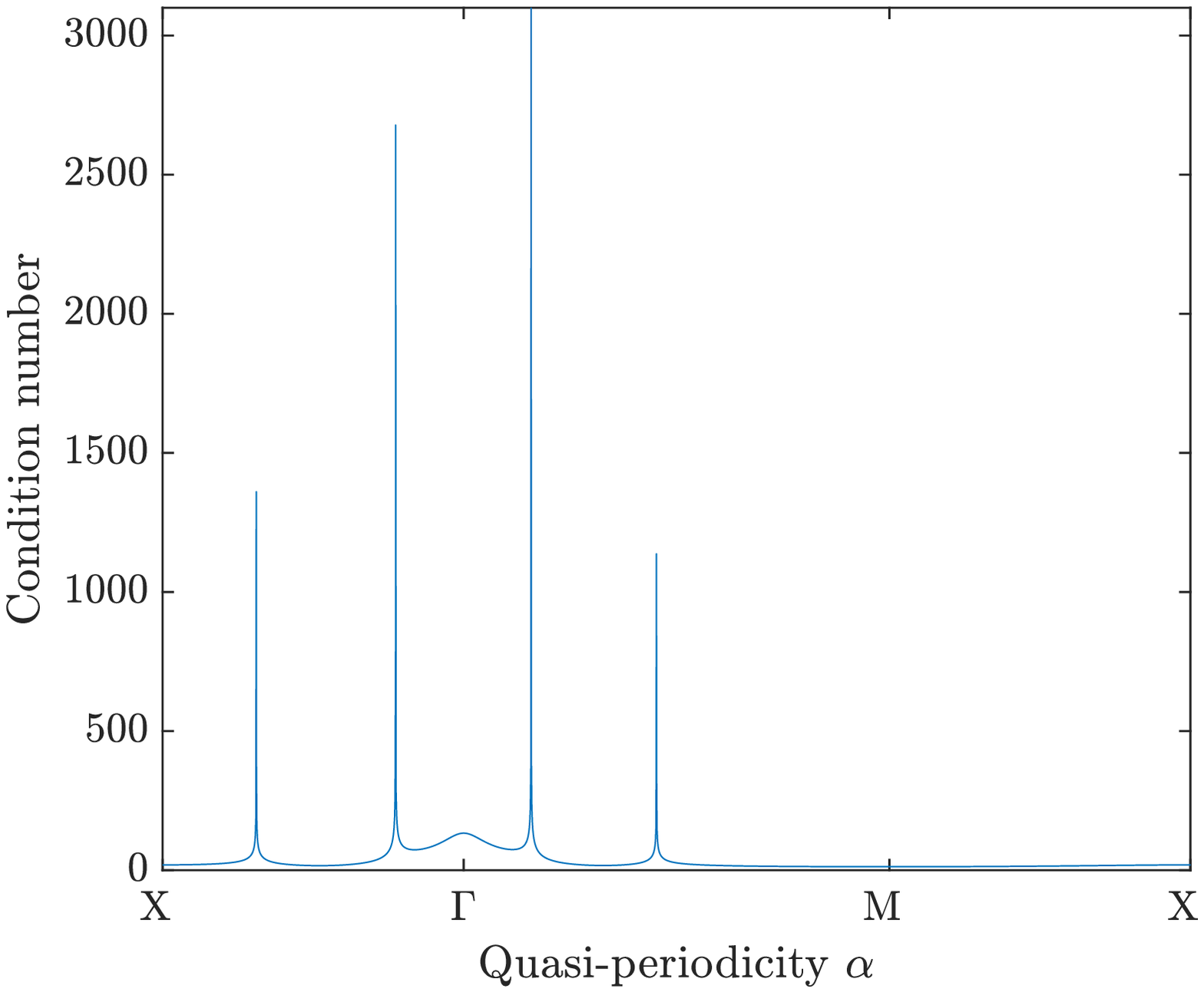}
		\end{center}
		\caption{Condition number of the eigenvector matrix of $\W$, showing a defective matrix at the degenerate points.}\label{fig:SLresc}
	\end{subfigure}
	\caption{Band structure of high-contrast dimers of resonators in a square lattice. For nonzero modulation strengths $\varepsilon$, the bands form exceptional point degeneracies where the system is defective and the spectrum changes from being real to being conjugate-symmetric.}\label{fig:SLres}
\end{figure}

\subsubsection{Dirac cone degeneracy at $\Gamma$ in trimer honeycomb lattice}
Next, we consider a honeycomb lattice of resonator trimers as illustrated in \Cref{fig:honeycomb}, similar to structures considered in \cite{koutserimpas2018zero,fleury2016floquet}. We use the same lattice as in \Cref{sec:HL_uni}, where the unit cell now contains six resonators $D_i$ respectively centred at $c_i$, $i=1,..,6$, given by
\begin{align*}
	c_1 &= (1,0) + 3R(1,0), \quad c_2 = (1,0) + 3R\left(\cos\left(\frac{2\pi}{3}\right),\sin\left(\frac{2\pi}{3}\right)\right),   &c_3 = (1,0) + 3R\left(\cos\left(\frac{4\pi}{3}\right),\sin\left(\frac{4\pi}{3}\right)\right),\\
	c_4 &= (2,0) + 3R\left(\cos\left(\frac{\pi}{3}\right),\sin\left(\frac{\pi}{3}\right)\right), \qquad c_5 = (2,0) - 3R(1,0), 	 &c_6 = (2,0) + 3R\left(\cos\left(\frac{5\pi}{3}\right),\sin\left(\frac{5\pi}{3}\right)\right).
\end{align*}
We use the modulation given by $\kappa_i(t) = 1, \ i=1,...,6$ and 
\begin{equation}\rho_1(t) = \rho_4(t) = \frac{1}{1 + \varepsilon\cos(\Omega t)}, \quad \rho_2(t) = \rho_5(t) = \frac{1}{1 + \varepsilon\cos\left(\Omega t + \frac{2\pi}{3}\right)}, \quad \rho_3(t) = \rho_6(t) = \frac{1}{1 + \varepsilon\cos\left(\Omega t + \frac{4\pi}{3}\right)},\end{equation}
for $0 \leq \varepsilon < 1$.

The band structure of the material is presented in \Cref{fig:HLres} with modulation frequency $\Omega = 0.15$. In the static case, the band structure is folded and, as expected, exhibits a Dirac cone at $\alpha = \mathrm{K}$ \cite{ammari2020honeycomb}. As $\varepsilon$ increases, the gap between the $4$\textsuperscript{th} and the $5$\textsuperscript{th} bands at $\Gamma$ decreases. At a specific point, namely $\varepsilon = 0.3$, the gap closes in a Dirac cone at $\Gamma$. We remark that this linear dispersion at the origin of the Brillouin zone is a prerequisite for creating double-zero index materials \cite{koutserimpas2018zero,ammari2020highfrequency}.
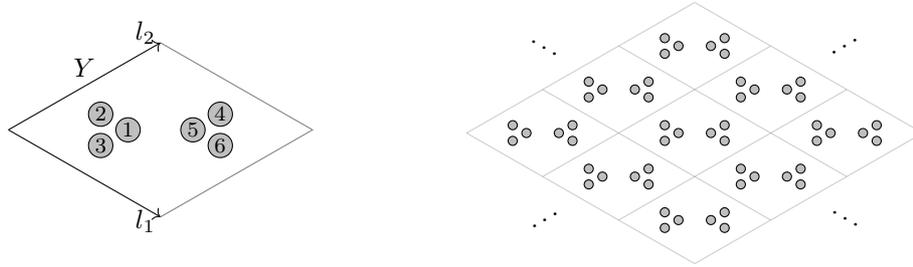
\begin{figure}[tbh]
	\begin{subfigure}[b]{0.48\linewidth}
		\centering
		\begin{tikzpicture}[scale=2]
				\pgfmathsetmacro{\r}{0.08pt}
				\pgfmathsetmacro{\rt}{0.12pt}
				\coordinate (a) at (1,{1/sqrt(3)});		
				\coordinate (b) at (1,{-1/sqrt(3)});	
				\coordinate (c) at (2,0);
				\coordinate (x1) at ({2/3},0);
				\coordinate (x2) at ({4/3},0);
				\draw[->] (0,0) -- (a) node[pos=0.9,xshift=0,yshift=7]{ $l_2$} node[pos=0.5,above]{$Y$};
				\draw[->] (0,0) -- (b) node[pos=0.9,xshift=0,yshift=-5]{ $l_1$};
				\draw[opacity=0.5] (a) -- (c) -- (b);
				\draw[fill=lightgray] (x1){} +(0:\rt) circle(\r) node{\footnotesize  1};
				\draw[fill=lightgray] (x1){} +(120:\rt) circle(\r)node{\footnotesize  2};
				\draw[fill=lightgray] (x1){} +(240:\rt) circle(\r) node{\footnotesize  3};
				\draw[fill=lightgray] (x2){} +(60:\rt) circle(\r) node{\footnotesize  4};
				\draw[fill=lightgray] (x2){} +(180:\rt) circle(\r) node{\footnotesize  5};
				\draw[fill=lightgray] (x2){} +(300:\rt) circle(\r) node{\footnotesize  6};
		\end{tikzpicture}
		\vspace{0.3cm}
		\caption{Hexagonal lattice unit cell $Y$ containing $6$ resonators.}
	\end{subfigure}
	\begin{subfigure}[b]{0.48\linewidth}
		\begin{tikzpicture}[scale=1]
			\begin{scope}[xshift=-5cm,scale=1]
				\pgfmathsetmacro{\r}{0.06pt}
				\pgfmathsetmacro{\rt}{0.12pt}
				\coordinate (a) at (1,{1/sqrt(3)});		
				\coordinate (b) at (1,{-1/sqrt(3)});
				
				\draw[opacity=0.2] (0,0) -- (a);
				\draw[opacity=0.2] (0,0) -- (b);
				\draw[fill=lightgray] ({2/3},0){} +(0:\rt) circle(\r);
				\draw[fill=lightgray] ({2/3},0){} +(120:\rt) circle(\r);
				\draw[fill=lightgray] ({2/3},0){} +(240:\rt) circle(\r);
				\draw[fill=lightgray] ({4/3},0){} +(60:\rt) circle(\r);
				\draw[fill=lightgray] ({4/3},0){} +(180:\rt) circle(\r);
				\draw[fill=lightgray] ({4/3},0){} +(300:\rt) circle(\r);
				
				\begin{scope}[shift = (a)]					
					\draw[opacity=0.2] (0,0) -- (1,{1/sqrt(3)});
					\draw[opacity=0.2] (0,0) -- (1,{-1/sqrt(3)});
					\draw[opacity=0.2] (1,{1/sqrt(3)}) -- (2,0);
					\draw[fill=lightgray] ({2/3},0){} +(0:\rt) circle(\r);
					\draw[fill=lightgray] ({2/3},0){} +(120:\rt) circle(\r);
					\draw[fill=lightgray] ({2/3},0){} +(240:\rt) circle(\r);
					\draw[fill=lightgray] ({4/3},0){} +(60:\rt) circle(\r);
					\draw[fill=lightgray] ({4/3},0){} +(180:\rt) circle(\r);
					\draw[fill=lightgray] ({4/3},0){} +(300:\rt) circle(\r);
				\end{scope}
				\begin{scope}[shift = (b)]
					
					\draw[opacity=0.2] (0,0) -- (1,{1/sqrt(3)});
					\draw[opacity=0.2] (0,0) -- (1,{-1/sqrt(3)});
					\draw[opacity=0.2] (2,0) -- (1,{-1/sqrt(3)});
					\draw[fill=lightgray] ({2/3},0){} +(0:\rt) circle(\r);
					\draw[fill=lightgray] ({2/3},0){} +(120:\rt) circle(\r);
					\draw[fill=lightgray] ({2/3},0){} +(240:\rt) circle(\r);
					\draw[fill=lightgray] ({4/3},0){} +(60:\rt) circle(\r);
					\draw[fill=lightgray] ({4/3},0){} +(180:\rt) circle(\r);
					\draw[fill=lightgray] ({4/3},0){} +(300:\rt) circle(\r);
				\end{scope}
				\begin{scope}[shift = ($-1*(a)$)]
					
					\draw[opacity=0.2] (0,0) -- (1,{1/sqrt(3)});
					\draw[opacity=0.2] (0,0) -- (1,{-1/sqrt(3)});
					\draw[fill=lightgray] ({2/3},0){} +(0:\rt) circle(\r);
					\draw[fill=lightgray] ({2/3},0){} +(120:\rt) circle(\r);
					\draw[fill=lightgray] ({2/3},0){} +(240:\rt) circle(\r);
					\draw[fill=lightgray] ({4/3},0){} +(60:\rt) circle(\r);
					\draw[fill=lightgray] ({4/3},0){} +(180:\rt) circle(\r);
					\draw[fill=lightgray] ({4/3},0){} +(300:\rt) circle(\r);
				\end{scope}
				\begin{scope}[shift = ($-1*(b)$)]
					
					\draw[opacity=0.2] (0,0) -- (1,{1/sqrt(3)});
					\draw[opacity=0.2] (0,0) -- (1,{-1/sqrt(3)});
					\draw[fill=lightgray] ({2/3},0){} +(0:\rt) circle(\r);
					\draw[fill=lightgray] ({2/3},0){} +(120:\rt) circle(\r);
					\draw[fill=lightgray] ({2/3},0){} +(240:\rt) circle(\r);
					\draw[fill=lightgray] ({4/3},0){} +(60:\rt) circle(\r);
					\draw[fill=lightgray] ({4/3},0){} +(180:\rt) circle(\r);
					\draw[fill=lightgray] ({4/3},0){} +(300:\rt) circle(\r);
				\end{scope}
				\begin{scope}[shift = ($(a)+(b)$)]
					
					\draw[opacity=0.2] (0,0) -- (1,{1/sqrt(3)});
					\draw[opacity=0.2] (0,0) -- (1,{-1/sqrt(3)});
					\draw[opacity=0.2] (1,{1/sqrt(3)}) -- (2,0) -- (1,{-1/sqrt(3)});
					\draw[fill=lightgray] ({2/3},0){} +(0:\rt) circle(\r);
					\draw[fill=lightgray] ({2/3},0){} +(120:\rt) circle(\r);
					\draw[fill=lightgray] ({2/3},0){} +(240:\rt) circle(\r);
					\draw[fill=lightgray] ({4/3},0){} +(60:\rt) circle(\r);
					\draw[fill=lightgray] ({4/3},0){} +(180:\rt) circle(\r);
					\draw[fill=lightgray] ({4/3},0){} +(300:\rt) circle(\r);
				\end{scope}
				\begin{scope}[shift = ($-1*(a)-(b)$)]
					
					\draw[opacity=0.2] (0,0) -- (1,{1/sqrt(3)});
					\draw[opacity=0.2] (0,0) -- (1,{-1/sqrt(3)});
					\draw[fill=lightgray] ({2/3},0){} +(0:\rt) circle(\r);
					\draw[fill=lightgray] ({2/3},0){} +(120:\rt) circle(\r);
					\draw[fill=lightgray] ({2/3},0){} +(240:\rt) circle(\r);
					\draw[fill=lightgray] ({4/3},0){} +(60:\rt) circle(\r);
					\draw[fill=lightgray] ({4/3},0){} +(180:\rt) circle(\r);
					\draw[fill=lightgray] ({4/3},0){} +(300:\rt) circle(\r);
				\end{scope}
				\begin{scope}[shift = ($(a)-(b)$)]
					
					\draw[opacity=0.2] (0,0) -- (1,{1/sqrt(3)});
					\draw[opacity=0.2] (0,0) -- (1,{-1/sqrt(3)});
					\draw[opacity=0.2] (1,{1/sqrt(3)}) -- (2,0);
					\draw[fill=lightgray] ({2/3},0){} +(0:\rt) circle(\r);
					\draw[fill=lightgray] ({2/3},0){} +(120:\rt) circle(\r);
					\draw[fill=lightgray] ({2/3},0){} +(240:\rt) circle(\r);
					\draw[fill=lightgray] ({4/3},0){} +(60:\rt) circle(\r);
					\draw[fill=lightgray] ({4/3},0){} +(180:\rt) circle(\r);
					\draw[fill=lightgray] ({4/3},0){} +(300:\rt) circle(\r);
				\end{scope}
				\begin{scope}[shift = ($-1*(a)+(b)$)]					
					\draw[opacity=0.2] (0,0) -- (1,{1/sqrt(3)});
					\draw[opacity=0.2] (0,0) -- (1,{-1/sqrt(3)});
					\draw[opacity=0.2] (2,0) -- (1,{-1/sqrt(3)});
					\draw[fill=lightgray] ({2/3},0){} +(0:\rt) circle(\r);
					\draw[fill=lightgray] ({2/3},0){} +(120:\rt) circle(\r);
					\draw[fill=lightgray] ({2/3},0){} +(240:\rt) circle(\r);
					\draw[fill=lightgray] ({4/3},0){} +(60:\rt) circle(\r);
					\draw[fill=lightgray] ({4/3},0){} +(180:\rt) circle(\r);
					\draw[fill=lightgray] ({4/3},0){} +(300:\rt) circle(\r);
				\end{scope}
				\begin{scope}[shift = ($2*(a)$)]
					\draw (1,0) node[rotate=30]{$\cdots$};
				\end{scope}
				\begin{scope}[shift = ($-2*(a)$)]
					\draw (1,0) node[rotate=210]{$\cdots$};
				\end{scope}
				\begin{scope}[shift = ($2*(b)$)]
					\draw (1,0) node[rotate=-30]{$\cdots$};
				\end{scope}
				\begin{scope}[shift = ($-2*(b)$)]
					\draw (1,0) node[rotate=150]{$\cdots$};
				\end{scope}
			\end{scope}
		\end{tikzpicture}
		\caption{Periodic system with trimers in a honeycomb lattice.}
	\end{subfigure}
	\caption{Illustration of the ``artificial spin'' honeycomb lattice, which can support a Dirac cone degeneracy at $\Gamma$.} \label{fig:honeycomb}
\end{figure}

\begin{figure}[h] 
	\begin{subfigure}[b]{0.3\linewidth}
		\begin{center}
			\includegraphics[width=1\linewidth]{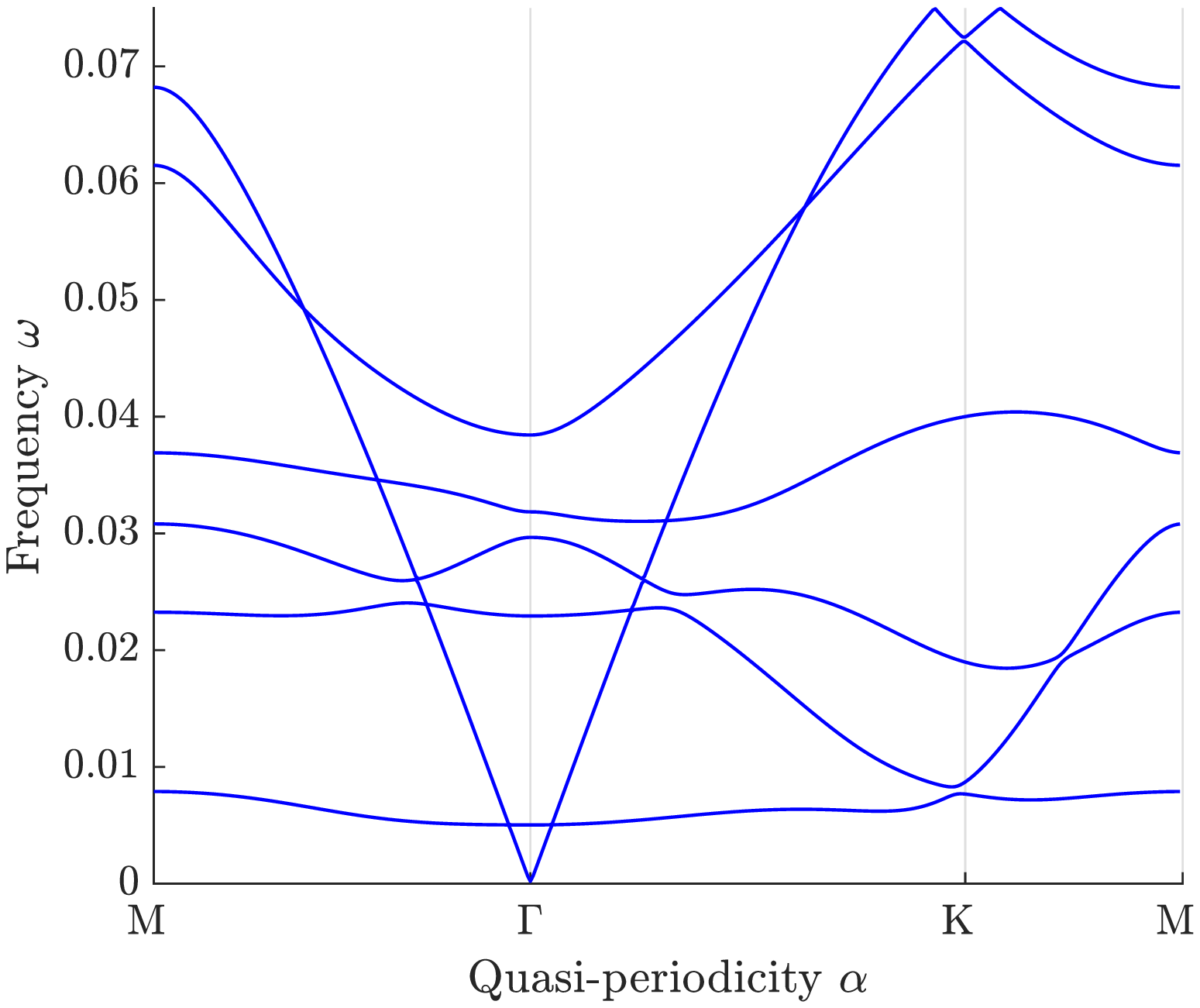}
		\end{center}
		\caption{Static, folded, band structure of the honeycomb lattice of trimers.\\} \label{fig:HLresa}
	\end{subfigure}
	\hspace{10pt}
	\begin{subfigure}[b]{0.3\linewidth}
		\begin{center}
			\includegraphics[width=1\linewidth]{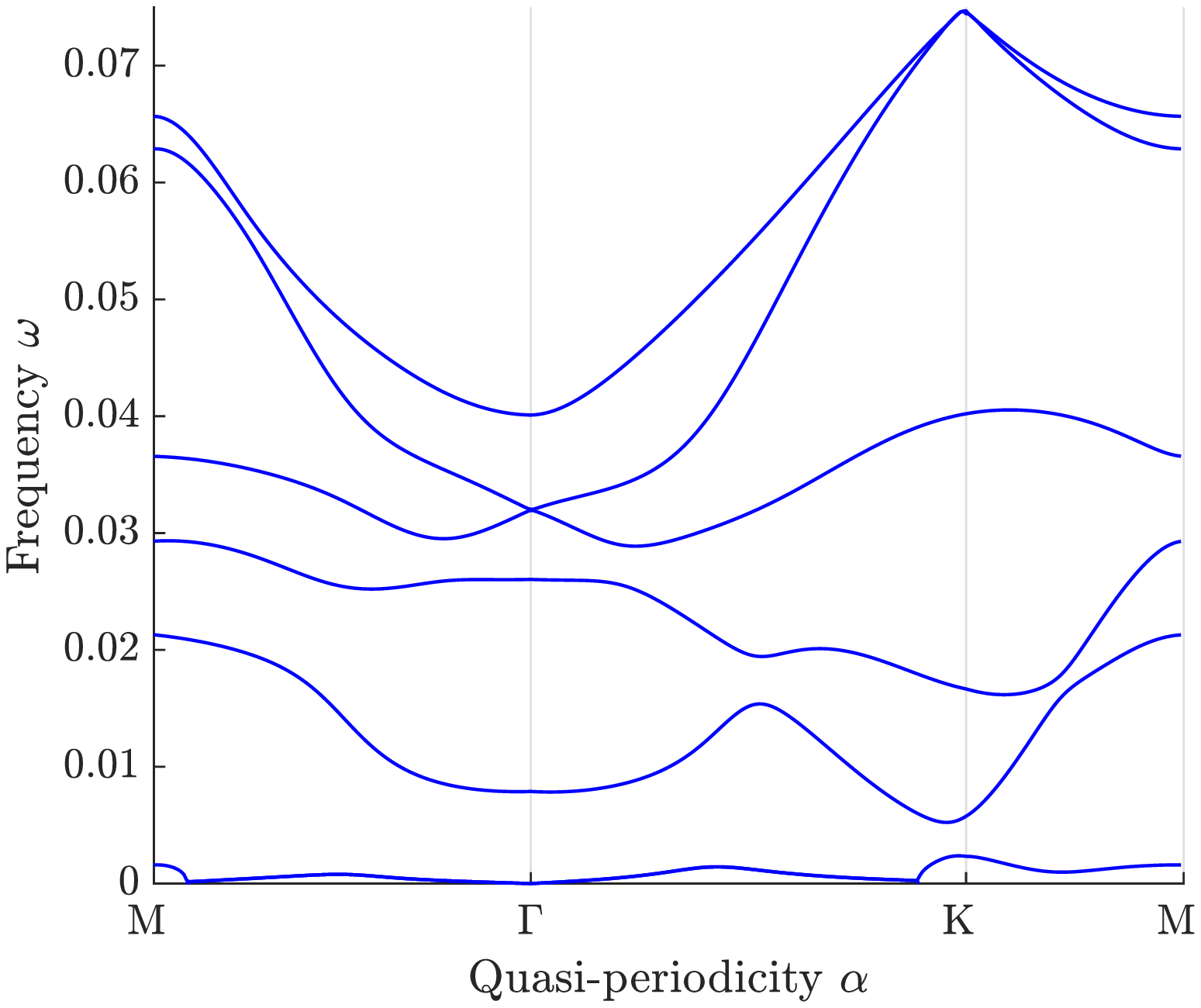}
		\end{center}
		\caption{Modulated band structure at $\varepsilon = 0.3$ and $\Omega = 0.15$, showing Dirac cones at both $\Gamma$ and $\mathrm{K}$.}\label{fig:HLresb}
	\end{subfigure}
	\hspace{10pt}
	\begin{subfigure}[b]{0.3\linewidth}
		\begin{center}
			\includegraphics[width=1\linewidth]{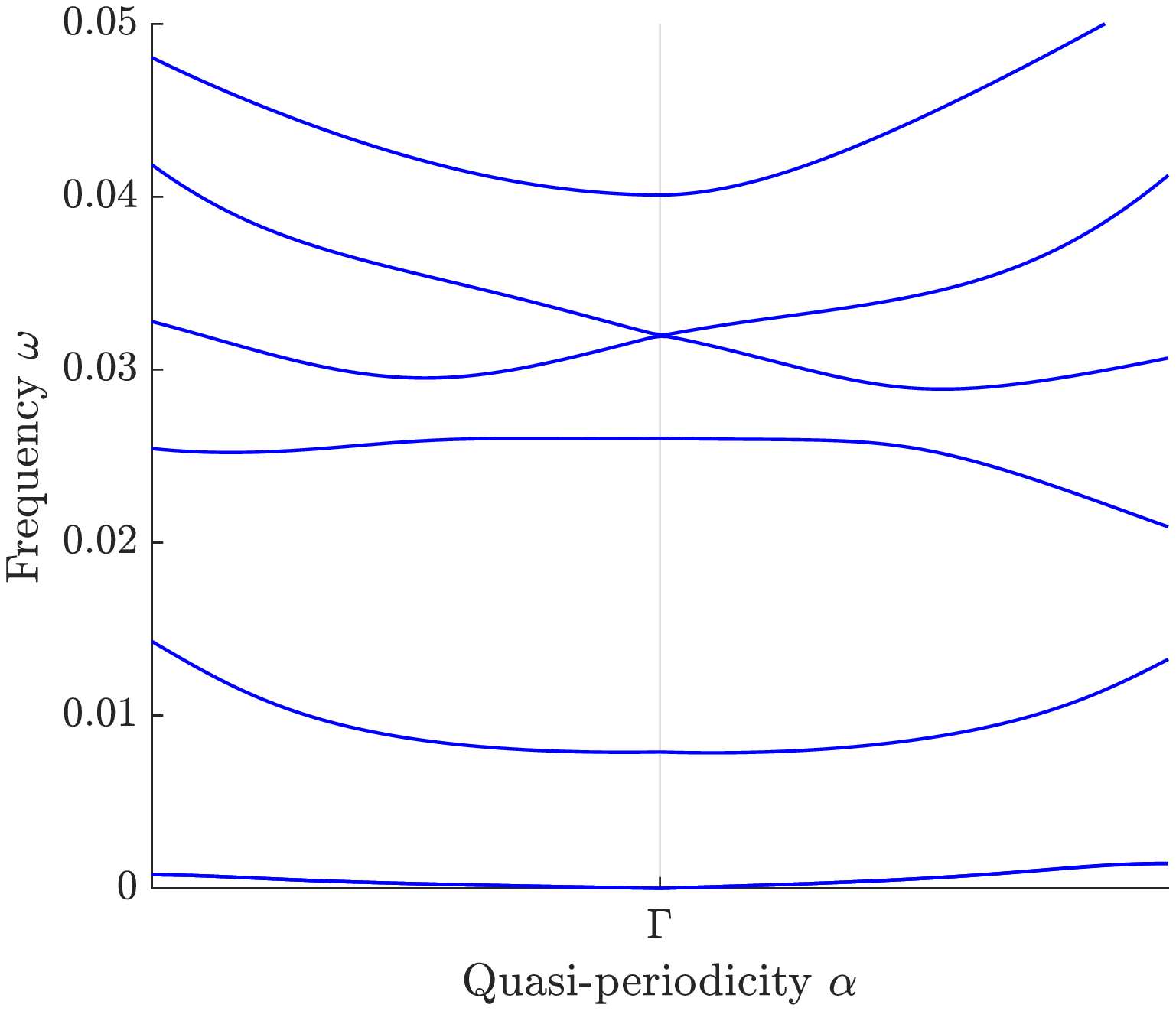}
		\end{center}
		\caption{Close-up of \Cref{fig:HLresb} around $\Gamma$, showing the Dirac cone of the $4$\textsuperscript{th} and $5$\textsuperscript{th} bands.} \label{fig:HLresc}
	\end{subfigure}
	\caption{Band structure of high-contrast resonators in a trimer honeycomb lattice. The static band structure (\Cref{fig:HLresa}) shows a Dirac cone at $\mathrm{K}$. As the modulation increases, the gap between the $4$\textsuperscript{th} and $5$\textsuperscript{th} bands closes, and at $\varepsilon = 0.3$ the gap closes in a Dirac cone. For clarity, only the positive real part of the band structure is shown.}\label{fig:HLres}
\end{figure}

\section{Concluding remarks}
In this work, we have provided a mathematical foundation for time-dependent systems of high-contrast subwavelength resonators. We have considered two types of time-modulation. In the case of uniform time-modulation, the wave equation is separable and the quasifrequency band structure is described as the quasifrequencies of a Hill differential equation in terms of the static band structure. In the case where the time-modulation only occurs inside the resonators, the subwavelength band structure admits a capacitance matrix characterization, which generalizes the recently derived characterization in the static case. We have exemplified both types of modulation numerically, and demonstrated $k$-gaps, exceptional points and Dirac cone degeneracies at the origin of the Brillouin zone, the latter which enables zero-refractive index materials in the subwavelength regime. 

\appendix
\section{Finite system of resonators} \label{sec:finite}
In this section, we derive the corresponding results in the setting of finite resonator systems, not repeated periodically. We let $D$ be a collection of $N$ disjoint domains, defined as in \Cref{sec:formulation}. Again, we compute the quasifrequencies $\omega$ of the wave equation \eqref{eq:wave} (we emphasize that there is no quasiperiodic condition in the spatial dimension in this case, and the quasifrequencies $\omega$ represent resonant frequencies and not band functions).

\subsection{Uniformly modulated systems}
As in \Cref{sec:uniform}, we assume that the material parameters are modulated by some envelopes $\kappa_t$, $\rho_t$ as follows:
\begin{equation}\label{eq:uniform_finite}
	\kappa(x,t) = \kappa_x(x)\kappa_t(t), \qquad \rho(x,t) = \rho_x(x)\rho_t(t),
\end{equation}
where $\rho_t$ is piecewise continuous and $\kappa_t\in C^1(\R)$. Now, we assume that the spatial parts $\kappa_x$ and $\rho_x$ satisfy 
\begin{equation}\label{eq:param_finite}
	\kappa_x(x) =  \begin{cases}
		\kappa_0, & x \in \R^d \setminus \overline{D}, \\ \kappa_\r, & x\in D,
	\end{cases} \qquad \rho_x(x) =  \begin{cases}
		\rho_0, & x \in \R^d \setminus \overline{D}, \\ \rho_\r, & x\in D.
	\end{cases}
\end{equation}
We seek solutions to \eqref{eq:wave} by separation of variables. Assume $u(x,t) = \Phi(t) v(x)$. Then we find from \eqref{eq:wave} that 
\begin{equation} \label{eq:time_finite}  \frac{\dx }{\dx t}\frac{1}{\kappa_t(t)} \frac{\dx}{\dx t}\Phi(t) +\frac{\omega^2}{\rho_t(t)}\Phi(t) = 0,
\end{equation}
and
\begin{equation} \label{eq:space_finite}
	\left\{
	\begin{array} {ll}
		\ds \Delta {v}+ \frac{\omega^2\rho_0}{\kappa_0} {v}  = 0 & \text{in } \R^d \setminus \overline{D}, \\[0.3em]
		\ds \Delta {v}+\frac{\omega^2\rho_\r}{\kappa_\r}{v}  = 0 & \text{in } D, \\
		\nm
		\ds  {v}|_{+} -{v}|_{-}  = 0  & \text{on } \partial D, \\
		\nm
		\ds  \delta \frac{\partial {v}}{\partial \nu} \bigg|_{+} - \frac{\partial {v}}{\partial \nu} \bigg|_{-} = 0 & \text{on } \partial D, \\
		\nm
	\end{array}
	\right.
\end{equation}
for some constant $\omega$. In the static case, \eqref{eq:space_finite} is usually coupled with the outgoing Sommerfeld radiation condition, in order to select the physical solution. In the modulated case however, due to lack of energy conservation, there may be both outgoing and incoming solutions. We denote the outgoing (respectively incoming) resonant frequencies by $\omega_{\mathrm{s},i}^+$ (respectively $\omega_{\mathrm{s},i}^-$) for $i=1,2,...$. We observe that $\omega_{\mathrm{s},i}^- = \overline{\omega_{\mathrm{s},i}^+}$. It is well-known that the first $2N$ resonant frequencies scale as $O(\delta^{1/2})$ and $N$ of these have positive real part (known as the \emph{Minnaert frequencies}) \cite{davies2019fully}. These are real to leading order, and we have $\omega_{\mathrm{s},i}^+ = \omega_{\mathrm{s},i}^- + O(\delta)$ for $i=1,2,...,2N$.  Substituting $\Phi(t) = \sqrt{\kappa_t(t)} \Psi(t)$ we can obtain the following result.
\begin{prop}
	Assume that the material parameters are given by \eqref{eq:uniform_finite} and \eqref{eq:param_finite}. Then, the quasifrequencies $\omega \in Y^*_t$ to the wave equation \eqref{eq:wave} in the subwavelength regime are given by the quasifrequencies of the Hill equation
	\begin{equation}\Psi''(t) +  \left(\left(\omega_i^\pm(t)\right)^2 + \frac{\sqrt{\kappa_t}}{2}\frac{\dx}{\dx t} \frac{\kappa_t'}{\kappa_t^{3/2}}\right)\Psi(t) = 0,\end{equation}
	for $i=1,2,...,$. Here, $\omega_i^\pm(t)$ are the instantaneous resonant frequencies defined by $\omega_i^\pm(t) = \omega_{\mathrm{s},i}^\pm\sqrt{\frac{\kappa_t(t)}{\rho_t(t)}}$.
\end{prop}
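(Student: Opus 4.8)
The plan is to follow the separation-of-variables argument behind \Cref{prop:infinite}, adapting it to the finite, non-periodic geometry. Inserting the ansatz $u(x,t) = \Phi(t)v(x)$ into \eqref{eq:wave} with the factorized parameters \eqref{eq:uniform_finite}--\eqref{eq:param_finite} gives
\begin{equation*}
\frac{v}{\kappa_x}\frac{\dx}{\dx t}\frac{1}{\kappa_t}\frac{\dx\Phi}{\dx t} = \frac{\Phi}{\rho_t}\nabla\cdot\frac{1}{\rho_x}\nabla v,
\end{equation*}
and multiplying through by $\kappa_x\rho_t/(\Phi v)$ separates the $x$- and $t$-dependences into a common separation constant, which I denote $\lambda$. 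This produces the time equation \eqref{eq:time_finite} (with $\omega^2$ read as $\lambda$) together with the spatial transmission problem \eqref{eq:space_finite}, the two being coupled only through $\lambda$.

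The step that genuinely distinguishes the finite case from the periodic one is the identification of the admissible $\lambda$. Whereas \eqref{eq:space} is a quasiperiodic eigenvalue problem with honest discrete spectrum, \eqref{eq:space_finite} is a scattering resonance problem on all of $\R^d$; since the modulated medium is not energy-conserving, one cannot impose the outgoing radiation condition a priori, and both the outgoing and incoming branches must be retained. I would therefore invoke the subwavelength resonance theory for finite high-contrast systems \cite{davies2019fully}: there are $2N$ resonances scaling as $O(\delta^{1/2})$, forming conjugate pairs $\omega_{\mathrm{s},i}^\pm$ with $\omega_{\mathrm{s},i}^- = \overline{\omega_{\mathrm{s},i}^+}$, of which $N$ have positive real part. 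Restricting to the subwavelength regime in the sense of \Cref{def:sub} then fixes $\lambda = (\omega_{\mathrm{s},i}^\pm)^2$ as the only relevant separation constants.

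With $\lambda$ pinned down, the remaining computation is the substitution $\Phi(t) = \sqrt{\kappa_t(t)}\,\Psi(t)$ in \eqref{eq:time_finite}, exactly as for \Cref{prop:infinite}. Expanding $\tfrac{\dx}{\dx t}\bigl(\kappa_t^{-1}\tfrac{\dx\Phi}{\dx t}\bigr)$ and using $\tfrac{\dx}{\dx t}\kappa_t^{-1/2} = -\tfrac12\kappa_t'\kappa_t^{-3/2}$ cancels the first-order term in $\Psi'$ identically; multiplying through by $\sqrt{\kappa_t}$ leaves
\begin{equation*}
\Psi''(t) + \left(\lambda\frac{\kappa_t(t)}{\rho_t(t)} + \frac{\sqrt{\kappa_t}}{2}\frac{\dx}{\dx t}\frac{\kappa_t'}{\kappa_t^{3/2}}\right)\Psi(t) = 0.
\end{equation*}
Inserting $\lambda = (\omega_{\mathrm{s},i}^\pm)^2$ and recognizing $\lambda\,\kappa_t/\rho_t = (\omega_i^\pm(t))^2$ yields precisely the claimed Hill equation.

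Finally I would check that the substitution preserves the quasifrequency, so that the characteristic exponents of the Hill equation really are the quasifrequencies $\omega \in Y^*_t$ sought in \eqref{eq:wave}: since $\kappa_t$ is $C^1$, positive and $T$-periodic, the factor $\sqrt{\kappa_t}$ is itself $T$-periodic, whence $\Phi(t)e^{-\iu\omega t}$ is $T$-periodic if and only if $\Psi(t)e^{-\iu\omega t}$ is. The only delicate ingredient is the resonance-problem identification of the second paragraph; the algebra of the third paragraph is routine and formally identical to the periodic proof.
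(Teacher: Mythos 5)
Your proposal is correct and follows essentially the same route as the paper: separation of variables $u(x,t)=\Phi(t)v(x)$ yielding \eqref{eq:time_finite} and \eqref{eq:space_finite}, identification of the admissible separation constants with the $2N$ subwavelength resonances $\omega_{\mathrm{s},i}^\pm$ (retaining both outgoing and incoming branches, as the paper does via \cite{davies2019fully}), and the substitution $\Phi(t)=\sqrt{\kappa_t(t)}\,\Psi(t)$ carried over from \Cref{prop:infinite}. Your closing observation that the $T$-periodic factor $\sqrt{\kappa_t}$ preserves the quasifrequency is a point the paper leaves implicit, and is a welcome addition.
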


\subsection{Resonator-modulated systems}
We now consider the finite analogue of the system studied in \Cref{sec:resonatormod}. For simplicity, we will restrict to the case $d=3$ (the case $d=2$ requires a slightly different layer-potential analysis, as outlined in \cite[Appendix B]{ammari2018minnaert}). We assume
\begin{equation} \label{eq:resonatormod_finite}
	\kappa(x,t) = \begin{cases}
		\kappa_0, & x \in \R^d \setminus \overline{D}, \\  \kappa_\r\kappa_i(t), & x\in D_i,
	\end{cases}, \qquad \rho(x,t) = \begin{cases}
		\rho_0, & x \in \R^d \setminus \overline{D}, \\  \rho_\r\rho_i(t), & x\in D_i. \end{cases}
\end{equation}
We denote the outgoing (respectively incoming) Helmholtz Green's functions by $G^{k,+}$ (respectively $G^{k,-}$), defined by
\begin{equation}
G^{k,\pm}(x,y) := -\frac{e^{\pm\iu k|x-y|}}{4\pi|x-y|}, \quad x,y \in \R^3, x\neq y, k\in \mathbb{C}.
\end{equation}
Let $D\in \R^3$ be as in \Cref{sec:formulation}. We introduce the single layer potential $\mathcal{S}_{D}^{k,\pm}: L^2(\partial D) \rightarrow H_{\textrm{loc}}^1(\R^3)$, defined by
\begin{equation*}
	\mathcal{S}_D^{k,\pm}[\phi](x) := \int_{\partial D} G^{k,\pm}(x,y)\phi(y) \dx \sigma(y), \quad x \in \R^3.
\end{equation*}
We observe that $\mathcal{S}_D^{0,+} = \mathcal{S}_D^{0,-} =: \mathcal{S}_D$. Taking the trace on $\p D$, it is well-known that in dimension three  $\mathcal{S}_D: L^2(\p D) \rightarrow H^1(\p D)$ is invertible. We define the basis functions $\psi_i$ and the capacitance coefficients $C_{ij}$ as
\begin{equation}\psi_i = \left(\mathcal{S}_D\right)^{-1}[\chi_{\p D_i}], \qquad C_{ij}= -\int_{\p D_i} \psi_j   \dx \sigma,\end{equation}
for $i,j=1,...,N$. The capacitance matrix $C$ is defined as the matrix $C = \left(C_{ij}\right)$. Following exactly the same steps as those in \Cref{sec:resonatormod}, we can then prove the following result.
\begin{thm}
	Assume that the material parameters are given by \eqref{eq:resonatormod_finite}. Then, as $\delta \to 0$, the quasifrequencies $\omega \in Y^*_t$ to the wave equation \eqref{eq:wave} in the subwavelength regime are, to leading order, given by the quasifrequencies of the system of ordinary differential equations
	\begin{equation}\label{eq:C_ODE_finite}
		\sum_{j=1}^N C_{ij} c_j(t) = \frac{|D_i|\rho_\r}{\delta\kappa_\r}\frac{1}{\rho_i(t)}\frac{\dx}{\dx t}\left(\frac{1}{\kappa_i(t)}\frac{\dx \rho_ic_i}{\dx t}\right),
	\end{equation}
	for $i=1,...,N$.
\end{thm}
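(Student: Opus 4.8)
The plan is to reproduce, essentially verbatim, the argument that established \Cref{thm:main}, with the quasiperiodic layer potentials replaced by their free-space counterparts $\mathcal{S}_D^{k,\pm}$ and the quasiperiodic capacitance data $\psi_i^\alpha, C_{ij}^\alpha$ replaced by the free-space data $\psi_i = (\mathcal{S}_D)^{-1}[\chi_{\p D_i}]$, $C_{ij} = -\int_{\p D_i}\psi_j\dx\sigma$. First I would insert the Fourier expansion $u(x,t) = e^{\iu\omega t}\sum_n v_n(x)e^{\iu n\Omega t}$ into \eqref{eq:wave} under the modulation \eqref{eq:resonatormod_finite} and pass to the frequency domain, obtaining the analogue of \eqref{eq:freq}: a coupled family of Helmholtz equations at frequencies $\omega+n\Omega$, now posed on all of $\R^3$ rather than on the unit cell, with the same convolutions $v_{i,n}^*, v_{i,n}^{**}$ encoding the couplings through the Fourier coefficients of $1/\rho_i$ and $1/\kappa_i$.

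Next I would carry out the same sequence of estimates. The regularity bound $\|v_n\|_{H^1} = o(1/n)$ follows from the $C^1$-in-$t$ regularity of $u$ exactly as before, and the energy estimate of \Cref{lem:const} goes through unchanged to give $v_{i,n}^*(x) = c_{i,n} + O(\delta^{(1-\gamma)/2})$ inside each $D_i$. In $\R^3\setminus\overline{D}$ I would represent $v_n = \mathcal{S}_D^{\omega+n\Omega,\pm}[\phi_n]$ and use the low-frequency expansion $\mathcal{S}_D^{k,\pm} = \mathcal{S}_D + O(k^2)$ together with the trace relation to deduce $\sum_m r_{i,m}\phi_{n-m} = \sum_j c_{j,n}\psi_j + O(\delta^{(1-\gamma)/2})$, the free-space analogue of \eqref{eq:conv+basis}. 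Integrating the interior equation over $D_i$ via the divergence theorem reproduces \eqref{eq:first}, while the jump relation for $\p_\nu \mathcal{S}_D^{k,\pm}$ and the integration identity $\int_{\p D_i}(\tfrac12 I + \mathcal{K}_D^*)[\phi]\dx\sigma = \int_{\p D_i}\phi\dx\sigma$ reproduce \eqref{eq:second} with $C_{ij}^\alpha$ replaced by $C_{ij}$. Combining the two relations and taking the inverse transform in $t$ — using $c_i(t) = V_i(t)/(|D_i|\rho_i(t))$ and translating each factor $(\omega+n\Omega)$ into the differential operator $-\iu\,\dx/\dx t$ — then yields the system \eqref{eq:C_ODE_finite}.

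The one genuinely new point, and the step I expect to require the most care, is the treatment of the radiation condition. Unlike the quasiperiodic setting, the finite solution lives on all of $\R^3$ and must be matched to incoming or outgoing behaviour at infinity, which is why both kernels $G^{k,\pm}$ appear; moreover, the broken energy conservation means both families $\omega_{\mathrm{s},i}^\pm$ are admissible. The argument I would give is that, since $\omega,\Omega = O(\delta^{1/2})$, every relevant frequency $\omega+n\Omega$ with $|n|\le M$ tends to $0$, so the expansion $\mathcal{S}_D^{k,\pm} = \mathcal{S}_D + O(k^2)$ collapses to the same operator $\mathcal{S}_D = \mathcal{S}_D^{0,+} = \mathcal{S}_D^{0,-}$ irrespective of the $\pm$ choice; hence the radiation condition enters only beyond leading order and does not alter the capacitance matrix reduction. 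This is also where the restriction to $d=3$ is used: it guarantees the invertibility of $\mathcal{S}_D$ on $L^2(\p D)$ that makes the basis functions $\psi_i$, and therefore the entire capacitance formulation, well defined, whereas the two-dimensional kernel $G^{0}$ is only conditionally invertible and would force the separate analysis of \cite[Appendix B]{ammari2018minnaert}.
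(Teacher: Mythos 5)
Your proposal coincides with the paper's own proof, which consists precisely of the remark that one follows the same steps as in \Cref{sec:resonatormod} with the quasiperiodic data $\mathcal{S}_D^{\alpha,k}$, $\psi_i^\alpha$, $C_{ij}^\alpha$ replaced by the free-space data $\mathcal{S}_D^{k,\pm}$, $\psi_i$, $C_{ij}$; your discussion of the radiation condition and of why $d=3$ is needed correctly fills in what the paper leaves implicit. Two small corrections are in order. First, in three dimensions the low-frequency expansion is $\mathcal{S}_D^{k,\pm} = \mathcal{S}_D \mp \tfrac{\iu k}{4\pi}\int_{\p D}\cdot\;\dx\sigma + O(k^2)$, so the error is $O(k)$, not $O(k^2)$ as you wrote (the quasiperiodic kernel is even in $k$, while the free-space kernel is not); since $k=|\omega+n\Omega| = O(\delta^{(1-\gamma)/2})$ uniformly for $|n|\leq M$, this term is absorbed into the $O(\delta^{(1-\gamma)/2})$ errors already carried through the argument, so your leading-order conclusion survives — indeed the $\pm$-insensitivity you invoke for the radiation condition is exactly this $O(k)$ discrepancy, and the Neumann--Poincar\'e expansion $(\mathcal{K}_D^{k,\pm})^* = \mathcal{K}_D^* + O(k^2)$ still holds at order $k^2$ because the constant first-order kernel term has vanishing normal derivative. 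Second, be aware that reproducing the periodic computation verbatim yields the analogue of \eqref{eq:C_ODE} \emph{with the minus sign} on the right-hand side; the $+$ sign in \eqref{eq:C_ODE_finite} as stated appears to be a sign typo in the paper (with $C$ positive definite it would render all leading-order quasifrequencies purely imaginary, contradicting the fact that the Minnaert frequencies are real to leading order), so your derivation should be expected to produce the minus-sign version rather than the displayed equation.
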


\bibliographystyle{abbrv}
\bibliography{time}{}
\end{document}